\newtheorem{thm}{Th\'eor\`eme}[section]
\newtheorem{rema}[thm]{Remarque}
\newtheorem{cor}[thm]{Corollaire}
\newtheorem{lem}[thm]{Lemme}
\newtheorem{prop}[thm]{Proposition}
\newtheorem{defn}[thm]{D\'efinition}
\newtheorem{ex}[thm]{Exemple}
\theoremstyle{definition}
\numberwithin{equation}{section}
\newcommand{\Z}{\mathbb Z}
\newcommand{\n}{\noindent}
\newcommand{\cal}{\mathcal}
\newcommand{\dis}{\displaystyle}
\newcommand{\resumename}{Abstract}
\newenvironment{resume}{\narrower\footnotesize\bf
\noindent\resumename.\quad\footnotesize\rm}{\par\bigskip}
\font\hb=cmbx12
\newcommand\pt{\hbox{\hb .}}
\begin{document}

\title[Algèbre Pré-Gerstenhaber]{ Algèbre Pré-Gerstenhaber à homotopie près}

\author[W. Aloulou, D. Arnal et R. Chatbouri]{Walid Aloulou, Didier Arnal et Ridha Chatbouri}
\date{19/06/2012}

\begin{abstract}

On \'etudie le concept d'alg\`ebre \`a homotopie pr\`es pour une
structure d\'efinie par deux op\'erations $.$ et $[~,~]$. Un
exemple important d'une telle structure est celui d'alg\`ebre de
Gerstenhaber (commutative et de Lie). La notion d'alg\`ebre de
Gerstenhaber \`a homotopie pr\`es ($G_\infty$ alg\`ebre) est
connue.

Ici nous proposons une d\'efinition d'alg\`ebre pr\'e-Gerstenhaber (pr\'e-commutative et pr\'e-Lie) permettant la construction d'une $\hbox{pre}G_\infty$ alg\`ebre.

Partant d'une structure pr\'e-commutative (Zinbiel) et pr\'e-Lie, on utilise les op\'erades duales correspondantes. Nous donnons la construction explicite de l'alg\`ebre \`a homotopie pr\`es associ\'ee. Celle-ci est une bicog\`ebre (Leibniz et permutative), munie d'une codiff\'erentielle qui est une cod\'erivation des deux coproduits.

\end{abstract}


\address{Universit\'e de Sousse, Laboratoire de Mathématique Physique Fonctions Spéciales et Applications
et D\'epartement de Math\'ematiques, Institut Pr\'eparatoire aux Etudes
d'Ing\'enieurs de Sfax, Route Menzel Chaker Km 0.5, Sfax, 3018,
Tunisie}
\email{Walid.Aloulou@ipeim.rnu.tn}
\address{
Institut de Math\'ematiques de Bourgogne\\
UMR CNRS 5584\\
Universit\'e de Bourgogne\\
U.F.R. Sciences et Techniques
B.P. 47870\\
F-21078 Dijon Cedex\\France} \email{Didier.Arnal@u-bourgogne.fr}
\address{Universit\'e de Sousse, Laboratoire de Mathématique Physique Fonctions Spéciales et Applications
et D\'epartement de Math\'ematiques\\
Facult\'e des Sciences de Monastir\\
Avenue de l'environnement\\
5019 Monastir\\
Tunisie}
\email{Ridha.Chatbouri@ipeim.rnu.tn}

\keywords{Alg\`ebre \`a homotopie pr\`es, cog\`ebres, alg\`ebres diff\'erentielles gradu\'ees} \subjclass[2000]{16A03, 16W30, 16E45}

\thanks{}

\maketitle


\begin{resume}

This paper is concerned by the concept of algebra up to homotopy for a structure defined by two operations $.$ and $[~,~]$. An important example of such a structure is the Gerstenhaber algebra (commutatitve and Lie). The notion of Gerstenhaber algebra up to homotopy ($G_\infty$ algebra) is known.

Here, we give a definition of pre-Gerstenhaber algebra (pre-commutative and pre-Lie) allowing the construction of $\hbox{pre}G_\infty$ algebra.

Given a structure of pre-commutative (Zinbiel) and pre-Lie algebra and working over the corresponding dual operads, we will give an explicit construction of the associated pre-Gerstenhaber algebra up to homotopy, this is a bicogebra (Leibniz and permutative) equipped with a codifferential which is a coderivation for the two coproducts.
\end{resume}

\

\section{Introduction}\label{sec1}



\

En 69 \cite{[Q]}, Quillen a montré qu'il existe une dualité entre les structures d'algèbre de Lie et d'algèbre commutative dite dualité de Quillen.

En 94 \cite{[GK]}, Ginzbug et Kapranov ont montré qu'il existe une
dualité entre d'autres types d'algèbres. Ils ont expliqué cette
dualité en introduisant les notions d'opérade $\mathcal{P}$ et
d'opérade duale $\mathcal{P}^!$ qui décrivent et déterminent le
type d'algèbre sur l'opérade $\mathcal{P}$. On dit une
$\mathcal{P}$-algèbre ou une algèbre sur l'opérade $\mathcal{P}$.
Par exemple, les opérades classiques sont les opérades $Lie$,
$Ass$, $Com$. Une $Com$-alg\`ebre (resp. une $Lie$-algèbre) est
une algèbre commutative (resp. de Lie). Les relations de
définition de la structure (multiplication, crochet) sont
quadratiques dans les lois (ainsi la relation d'associativité,
celle de Jacobi font intervenir deux opérations). Les opérades
$Lie$, $Ass$, $Com$ sont quadratiques.

Ginzbug et Kapranov ont montré que toute opérade quadratique admet une opérade duale, ils retrouvent la dualit\'e de Quillen sous la forme $Com^!=Lie$ et $Lie^!=Com$.

Ils ont démontré aussi que  $Ass^!=Ass$.

La structure de $\mathcal{P}$-algèbre libre sur un espace $V$ est donnée par une codifférentielle $Q$ (bilin\'eaire si la loi consid\'er\'ee est binaire) d'une cogèbre libre sur la $\mathcal{P}^!$-cogèbre sur $V$ construite à partir de $V$. La relation $Q^2=0$ est l'\'equation de structure, elle est \'equivalente aux axiomes de d\'efinition de la strucutre d'alg\`ebre.

Dans cette situation, la structure de $\mathcal{P}_\infty$ algèbre (ou $\mathcal P$-alg\`ebre \`a homotopie pr\`es) sur un espace vectoriel $V$ est la donnée d'une codifférentielle $Q$ (non n\'ecessairement bilin\'eaire) sur la $\mathcal{P}^!-$cogèbre sur $V$ (voir \cite{[GK]}).\\

Une alg\`ebre pr\'e-Lie est un espace vectoriel $V$ muni d'une loi
$\diamond$ telle que son antisym\'etris\'ee est une loi
d'alg\`ebre de Lie. Les axiomes de cette structure ont permis \`a
Chapoton et Livernet (\cite{[ChL]}) de r\'ealiser la construction
ci-dessus. Il existe donc une notion d'alg\`ebre pr\'e-Lie \`a
homotopie pr\`es (\cite{[ChL]}).

De m\^eme, il existe une notion d'alg\`ebre pr\'e-commutative, appel\'ee alg\`ebre de Zinbiel (\cite{[Liv]}), ces alg\`ebres sont \'equipp\'ees d'un produit $\wedge$, dont le sym\'etris\'e est associatif et commutatif. Les axiomes permettent de r\'ealiser la construction ci-dessus et de d\'efinir des alg\`ebres de Zinbiel \`a homotopie pr\`es (\cite{[Liv]}).\\

Maintenant, une alg\`ebre de Gerstenhaber est un espace vectoriel $V$ muni de deux lois : un produit commutatif $\wedge$ de  degr\'e 0 et un crochet de Lie $[~,~]$ de degr\'e -1, avec des relations de compatibilit\'es. On peut r\'ealiser la construction ci-dessus pour cette structure (voir \cite{[G]}, et surtout \cite{[BGHHW],[AAC]}). La construction compl\`ete n\'ecessite celle d'une bicog\`ebre $W$ (munie d'un coproduit $\Delta$ et d'un cocrochet $\kappa$ avec des relations de compatibilt\'es) et les deux lois de notre alg\`ebre permettent de construire une seule application $Q$ qui est une cod\'erivation \`a la fois de $\Delta$ et de $\kappa$. Les axiomes d'alg\`ebre de Gerstenhaber sont \'equivalents \`a l'\'equation de structure $Q^2=0$. Ceci permet de d\'efinir les alg\`ebres de Gerstenhaber \`a homotopie pr\`es (\cite{[AAC]}).

Pr\'ecisons cette construction. Comme $(V,\wedge)$ est une alg\`ebre commutative, on construit la cog\`ebre libre associ\'ee $(\mathcal H,\delta)$ et la codiff\'erentielle not\'ee $D$ que d\'efinit $\wedge$. On remarque que le crochet de Lie $[~,~]$ se prolonge en un unique crochet de Lie sur $\mathcal H$ et que les relations de compatibilt\'es entre $\wedge$ et $[~,~]$ sont \'equivalentes \`a : $D$ est une d\'erivation du crochet. On dispose alors d'une alg\`ebre de Lie diff\'erentielle $(\mathcal H,[~,~],D)$. La construction ci-dessus permet de construire la cog\`ebre libre associ\'ee $(W,\Delta)$ et une codiff\'erentielle $Q$, unique prolongement de $D+[~,~]$ \`a $W$. Enfin on prolonge de fa\c con unique $\delta$ en un cocrochet $\kappa$ sur $W$, on obtient la bicog\`ebre codiff\'erentielle $(W,\Delta,\kappa,Q)$.\\

Dans \cite{[Ag]}, Aguiar propose une définition d'une algèbre
pré-Gerstenhaber : un espace $V$ qui possède à la fois une
structure d'algèbre de Zinbiel pour $\wedge$ (de degr\'e 0) et une
structure d'alg\`ebre pré-Lie pour $\diamond$ (de degr\'e -1) avec
des relations de compatibilit\'es.

Reprenons la derni\`ere construction pour une alg\`ebre pr\'e-Gerstenhaber au sens d'Aguiar, on construit
d'une fa\c con unique la cog\`ebre codiff\'erentielle $(\mathcal H,\delta,D)$ d\'efinie pour les alg\`ebres
de Zinbiel par Livernet (\cite{[Liv]}), Aguiar a d\'efini un prolongement naturel du produit pr\'e-Lie $\diamond$
 \`a $\mathcal H$, not\'e $R$. $(\mathcal H,R)$ est bien une alg\`ebre pr\'e-Lie, malheureusement, $D$ n'est pas une d\'erivation de $R$, parce que les relations de compatibilit\'e entre $\wedge$ et $\diamond$ propos\'ees sont trop faibles.\\

On se propose dans ce papier de donner une d\'efinition plus restrictive d'alg\`ebre pr\'e-Gerstenhaber que celle
 d'Aguiar. Plus pr\'ecis\'ement, on imposera des conditions de compatibilit\'es plus fortes.
 On d\'efinit alors un autre prolongement, not\'e $R_2$ de $\diamond$ \`a $\mathcal H$ tel que $(\mathcal H,R_2,D)$ est
  une alg\`ebre pr\'e-Lie diff\'erentielle. On pourra alors achever la construction de la bicog\`ebre codiff\'erentielle
   $(W,\Delta,\kappa,Q)$ dans le cadre des alg\`ebres pr\'e-Gerstenhaber. On a ainsi une construction explicite de
    l'algèbre pr\'e-Gerstenhaber à homotopie près associ\'ee.\\


\section{Algèbres à homotopie près ou $\mathcal{P}_\infty$ algèbres}

\

\subsection{G\'en\'eralit\'es}

\

Soit $V=\bigoplus_{n\in\Z}V_n$ un espace vectoriel $\mathbb{Z}$ gradué. Le degr\'e d'un élément homogène $x$ dans
$V$ est noté $|x|$. On notera $T^+(V)$ l'espace $\bigoplus_{n>0}\bigotimes^n V$, gradu\'e par $|x_1\otimes\dots\otimes x_n|=|x_1|+\dots+|x_n|$.\\

\begin{defn}

\

\noindent
1) Une algèbre graduée est un espace vectoriel gradué $V$ muni d'une application bilinéaire
$ b: V \otimes V \rightarrow V$ de degré 0 ($|b|=0$)
c'est \`a dire :
$$b(V_i\otimes V_j)\subset V_{i+j}.$$


\noindent
2) Une dérivation $d$ de l'algèbre $(V, b)$ est une application vérifiant: $$d \circ b = b\circ(d \otimes id + id \otimes d).$$ Si $d$ est de degré $1$ et $d^2=0$, on dit que $d$ est une différentielle de $(V,b)$.\\
\end{defn}

\newpage

\begin{defn}

\

\noindent 1) Une cogèbre graduée est un espace vectoriel gradué
$\cal C$ muni d'une application linéaire $ \Delta: \cal
C\longrightarrow \cal C \otimes \cal C$ dite coproduit de $\cal C$
vérifiant:
$$\Delta {\cal C}_k \subset \dis\sum_{i+j =k} {\cal C}_i\otimes {\cal C}_j$$


\noindent
2) Une codérivation $Q$ de la cogèbre $(\mathcal{C}, \Delta)$ est une application vérifiant:
$$
\Delta \circ Q = (Q \otimes id + id \otimes Q)\circ \Delta.
$$
Si $Q$ est de degré $1$ et $Q^2=0$, on dit que $Q$ est une codifférentielle de $(\mathcal{C},\Delta)$. Dans ce cas, on dit que $(\mathcal C,\Delta,Q)$ est une cog\`ebre codiff\'erentielle.

\end{defn}

Par définition, l'espace $V[1]$ est le m\^eme espace que $V$, mais avec un d\'ecalage du degr\'e : le degré d'un élément homogène $x$ dans $V[1]$ noté $deg(x)$ devient $deg(x)=|x|-1$.
Rappelons qu'on peut faire correspondre \`a toute application
$n$-lin\'eaire $|~|$-antisym\'etrique (resp. $|~|$-sym\'etrique) $\phi:V\otimes\dots\otimes V\rightarrow V$ une application $n$-lin\'eaire $deg$-sym\'etrique (resp. $deg$-antisym\'etrique) $\phi':V[1]\otimes\dots\otimes V[1]\rightarrow V[1]$ en posant
$$
\phi'(x_1,\dots,x_n)=(-1)^{\sum_{i=1}^n(n-i)deg(x_i)}\phi(x_1,\dots,x_n)
$$
(voir \cite{[AAC1]}).\\

Les bonnes structures algèbriques sont des lois associées à une opérade quadratique $\mathcal P$ (\cite{[GK]}). En effet, si $V$ est un espace vectoriel gradué, on peut dans ce cas construire la cogèbre colibre $(W,\Delta)$ sur l'opérade duale $\mathcal P^!$ engendrée par le décalé $V[1]$ de $V$.

Dans cette situation, dire qu'une loi $b:V\otimes V\rightarrow V$ de degré 0 est une structure de type $\mathcal P$, c'est dire que sa décalée $b':V[1]\otimes V[1]\rightarrow V[1]$ est bilin\'eaire, de degré 1 et vérifie les symétries associées à l'opérade $\mathcal P^!$, donc est prolongeable de façon unique en une codérivation $Q$ de $(W,\Delta)$ et la loi $b$ vérifie les axiomes de la structure si et seulement si, $Q$ vérifie l'équation de structure $[Q,Q]=2Q^2=0$.\\

Toujours dans ce cas, on parlera de $\mathcal P_\infty$ alg\`ebre ou d'alg\`ebre \`a homotopie pr\`es pour toute cog\`ebre codiff\'erentielle $(W,\Delta,Q)$ correspondante \`a $(W,\Delta)$, $Q$ quadratique ou non.\\

\begin{defn}{\rm \cite{[GK]}}

\

Une structure de $\mathcal{P}_\infty$ algèbre sur un espace vectoriel $V$ est définie par la donnée d'une codifférentielle $Q$ sur la $\mathcal{P}^!$-cogèbre $(W,\Delta)$ construite \`a partir de $V$.\\
\end{defn}

En particulier, si $(V,b,d)$ est une alg\`ebre diff\'erentielle, on peut prolonger $d+b'$ en une unique cod\'erivation $Q$ de degr\'e 1 de $(W,\Delta)$ telle que $Q^2=0$, c'est \`a dire $(W,\Delta,Q)$ est une $\mathcal P_\infty$ alg\`ebre. D\'ecrivons quelques exemples.\\


\subsection{Algèbres de Lie et alg\`ebres commutatives à homotopie près ($L_\infty$ et $C_\infty$ alg\`ebres)}

\

Les structures d'alg\`ebre de Lie et d'alg\`ebre commutative sont associ\'ees \`a des op\'erades quadratiques : les op\'erades $Lie$ et $Com$. De plus, Ginzburg et Kapranov ont montr\'e que $Lie^!=Com$ et $Com^!=Lie$. On peut donc d\'efinir les notions de $L_\infty$ et $C_\infty$ alg\`ebres.\\

\subsubsection{Alg\`ebres de Lie et $L_\infty$ alg\`ebres}

\

\begin{defn}

\

Soit $\mathfrak{g}$ un espace vectoriel gradué. Une structure d'alg\`ebre de Lie gradu\'ee sur $\mathfrak g$ est la donn\'ee d'un crochet bilin\'eaire $[~,~]$ tel que :
\begin{itemize}
\item $[~,~]$ est de degr\'e 0:
$$
\forall i,j,~~[\mathfrak{g}_i,\mathfrak{ g}_j]\subset\mathfrak{ g}_{i+j},
$$

\item $[~,~]$ est antisym\'etrique :
$$\forall x,y\in\mathfrak{g}, ~~[x,y]=-(-1)^{|x||y|}[y,x],
$$

\item $[~,~]$ v\'erifie l'identit\'e de Jacobi :
$$
\forall x,y,z\in\mathfrak{g}, ~~(-1)^{|x||z|}[[x,y],z]+(-1)^{|y||x|}[[y,z],x]+(-1)^{|z||y|}[[z,x],y]=0.
$$
\end{itemize}

Si de plus, il existe une diff\'erentielle $d:\mathfrak{ g}\longrightarrow\mathfrak{ g}$ ({\sl i.e.} $d^2=0$), telle que $|d|=1$ et
$$
d([x,y])=[dx,y]+(-1)^{1.|x|}[x,dy].
$$
On dit que $(\mathfrak{g},[~,~],d)$ est une alg\`ebre de Lie diff\'erentielle gradu\'ee.\\
\end{defn}

Rappelons qu'on a pos\'e $deg(x)=|x|-1$. on notera aussi simplement $x$ ce degr\'e. On note aussi
$$
\varepsilon_x\left(\begin{smallmatrix}x_1&\dots&x_n\\ x_{i_1}&\dots&x_{i_n}\end{smallmatrix}\right)=\varepsilon_x(\sigma)
$$
la signature de la permutation $\sigma=\left(\begin{smallmatrix}1&\dots&n\\ i_1&\dots&i_n\end{smallmatrix}\right)$, en tenant compte des degr\'es de $x_j$, autrement dit, $\varepsilon_x$ est l'unique morphisme de $S_n$ dans $\mathbb R$ tel que $\varepsilon_x((i,j))=(-1)^{x_ix_j}$.\\

Puisque l'opérade duale de $Lie$ est $Com$, une $Lie^!$-cog\`ebre est une cog\`ebre coassociative et cocommutative.\\

\begin{prop}

\

Soit $\mathfrak{g}$ un espace gradué, notons $S^+(\mathfrak g[1])$ l'espace $^{T^+(\mathfrak g[1])}\diagup_{\langle x\otimes y-(-1)^{xy}y\otimes x\rangle}$, alors la $Lie^!$-cog\`ebre colibre engendr\'ee par $\mathfrak g[1]$ est
$$
\Big(S^+(\mathfrak{g}[1]),\Delta\Big)
$$
avec, si $I=\{i_1<\dots<i_k\}$, $x_I$ d\'esigne $x_{i_1}\dots x_{i_k}$ et :
$$
\Delta(x_1\dots x_n)=\displaystyle\sum_{\begin{smallmatrix}I\cup J=\{1,\dots n\}\\ \#I,\#J>0\end{smallmatrix}}\varepsilon_{x}\left(\begin{smallmatrix}x_1&\dots&x_n\\ x_I&&x_J\end{smallmatrix}\right)x_I\otimes x_J.
$$
\end{prop}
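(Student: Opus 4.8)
The statement asserts two things about $\big(S^+(\mathfrak g[1]),\Delta\big)$: that it is a coassociative, cocommutative cogèbre (i.e. a $Lie^!$-cogèbre, since $Lie^!=Com$), and that it is \emph{colibre} in the conilpotent sense, meaning it corepresents the functor $C\mapsto\mathrm{Hom}(C,\mathfrak g[1])$ on conilpotent $Lie^!$-cogèbres through the projection $\pi:S^+(\mathfrak g[1])\to\mathfrak g[1]$ onto the first symmetric power. The plan is to treat these in turn. First I would check that $\Delta$ is well defined on the quotient. Starting from the unshuffle coproduct on $T^+(\mathfrak g[1])$, it suffices to verify invariance under the defining relation $x_ix_{i+1}-(-1)^{x_ix_{i+1}}x_{i+1}x_i$: transposing two adjacent factors permutes the roles of $i$ and $i+1$ in every splitting $I\cup J$, and the Koszul sign $\varepsilon_x$, being the morphism with $\varepsilon_x\big((i,j)\big)=(-1)^{x_ix_j}$, absorbs exactly the factor $(-1)^{x_ix_{i+1}}$ whether the two indices land in the same block or in different blocks. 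Hence $\Delta$ descends to $S^+(\mathfrak g[1])$.

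Next I would prove coassociativity and cocommutativity by the standard ``reduction to multi-block splittings.'' Computing both $(\Delta\otimes\mathrm{id})\circ\Delta$ and $(\mathrm{id}\otimes\Delta)\circ\Delta$ on $x_1\cdots x_n$, each yields the same sum over ordered partitions into three nonempty blocks,
\[
\sum_{\substack{I\cup J\cup K=\{1,\dots,n\}\\ \#I,\#J,\#K>0}}\varepsilon_x\left(\begin{smallmatrix}x_1&\dots&x_n\\ x_I&x_J&x_K\end{smallmatrix}\right)\,x_I\otimes x_J\otimes x_K,
\]
the two sign prefactors reconciling because $\varepsilon_x$ is a group morphism (signs compose). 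For cocommutativity I would use the graded flip $\tau(a\otimes b)=(-1)^{\deg a\,\deg b}\,b\otimes a$: exchanging $I\leftrightarrow J$ in the defining sum and comparing the Koszul signs gives $\tau\circ\Delta=\Delta$.

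It remains to establish cofreeness. I would first note that $\Delta^{(n)}$ vanishes on $S^{\le n}(\mathfrak g[1])$, so $(S^+(\mathfrak g[1]),\Delta)$ is conilpotent, which is the category in which ``colibre'' is meant. Given a conilpotent $Lie^!$-cogèbre $(C,\Delta_C)$ and a degree-$0$ map $\varphi:C\to\mathfrak g[1]$, I would define, with $p_n:(\mathfrak g[1])^{\otimes n}\to S^n(\mathfrak g[1])$ the symmetrization projection,
\[
\Phi_n=\tfrac1{n!}\,p_n\circ\varphi^{\otimes n}\circ\Delta_C^{(n-1)}:C\to S^n(\mathfrak g[1]),\qquad \Phi=\sum_{n\ge1}\Phi_n,
\]
the sum being finite on each element by conilpotency of $C$. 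One then checks $\pi\circ\Phi=\varphi$ and $\Delta\circ\Phi=(\Phi\otimes\Phi)\circ\Delta_C$, so $\Phi$ is a morphism of cogèbres lifting $\varphi$. For uniqueness, any lift $\Psi$ satisfies $\pi^{\otimes n}\circ\Delta^{(n-1)}\circ\Psi=\varphi^{\otimes n}\circ\Delta_C^{(n-1)}$; since $\pi^{\otimes n}\circ\Delta^{(n-1)}(x_1\cdots x_n)=\sum_{\sigma\in S_n}\varepsilon_x(\sigma)\,x_{\sigma(1)}\otimes\cdots\otimes x_{\sigma(n)}$ and $p_n$ of this equals $n!\,x_1\cdots x_n$, the component $\Psi_n$ is forced to equal $\Phi_n$, giving $\Psi=\Phi$.

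The main obstacle I anticipate is the sign bookkeeping throughout: verifying well-definedness, coassociativity, cocommutativity, and above all the morphism identity $\Delta\circ\Phi=(\Phi\otimes\Phi)\circ\Delta_C$ requires tracking the Koszul signs $\varepsilon_x$ against the shifted degree $\deg=|\cdot|-1$, and checking that the symmetrization constants $1/n!$ reconcile with the $\varepsilon_x$-weighted sums so that the two sides agree block by block. Once the normalization $p_n\circ\pi^{\otimes n}\circ\Delta^{(n-1)}=n!\cdot\mathrm{id}$ on $S^n(\mathfrak g[1])$ is pinned down, the existence and uniqueness fall out cleanly; everything else is routine, the signs being the only genuinely delicate point.
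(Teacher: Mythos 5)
Your argument is correct, and it is in fact considerably more complete than what the paper offers: the paper states the proposition without proof, merely remarking afterwards that coassociativity and cocommutativity ``se v\'erifient directement'' with a pointer to [AMM], and it never addresses the cofreeness claim at all. Your three steps --- descent of the unshuffle coproduct to the quotient via the Koszul sign, reduction of both iterated coproducts to the single sum over ordered partitions into three nonempty blocks, and the universal property established through $\Phi_n=\frac1{n!}\,p_n\circ\varphi^{\otimes n}\circ\Delta_C^{(n-1)}$ with uniqueness forced by $p_n\circ\pi^{\otimes n}\circ\Delta^{(n-1)}=n!\cdot\mathrm{id}$ on $S^n(\mathfrak g[1])$ --- are exactly the standard proof, and your insistence that ``colibre'' be read in the conilpotent category is the right (and necessary) precision, since the genuinely cofree cocommutative cogèbre on $\mathfrak g[1]$ is larger. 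Two small points worth making explicit if you write this up in full: the normalisation $1/n!$ requires characteristic zero (harmless here, as the paper works over $\mathbb R$), and the verification that $\Phi$ is a cogèbre morphism, which you defer, genuinely uses both coassociativity and cocommutativity of $\Delta_C$ to regroup the $(n-1)$-fold iterated coproduct of $C$ into the two-block splittings of $\Delta$; that is the one place where the ``routine'' sign bookkeeping you flag actually interacts with the structure of $C$ rather than just with $\varepsilon_x$.
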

On v\'erifie directement (voir par exemple \cite{[AMM]}) que cette cog\`ebre est coassociative :
$$
(\Delta\otimes id)\circ\Delta=(id\otimes\Delta)\circ\Delta,
$$
et cocommutative : si $\tau$ est la volte $\tau(x\otimes y)=(-1)^{xy}y\otimes x$, alors
$$
\tau\circ\Delta=\Delta.
$$

Une structure de $L_\infty$ alg\`ebre sur $\mathfrak g$ est un triplet $\Big(S^+(\mathfrak{g}[1]),\Delta, Q\Big)$, formé de cette cog\`ebre cocommutative coassociative munie d'une cod\'erivation $Q$ de $\Delta$ de degr\'e $1$ et de carré nul. En particulier\\

\begin{prop}{\rm \cite{[AAC]}}

\

Si $(\mathfrak{g}, [~,~],d)$ est une algèbre de Lie diff\'erentielle gradu\'ee, on pose :
$$
Q_1(x)=dx,~Q_2(x.y)=(-1)^x[x,y], ~Q_k=0, \forall k\geq3,
$$
et
$$
Q(x_1\dots x_n)=\sum_{\begin{smallmatrix}I\cup J=\{1,\ldots,n\}\\ I\neq\emptyset\end{smallmatrix}}\varepsilon_x\left(\begin{smallmatrix}x_1\dots x_n\\x_Ix_J\end{smallmatrix}\right)Q_{\#I} (x_{I}).x_J.
$$

Alors $\left(S^+(\mathfrak{g}[1]),\Delta,Q\right)$ est une $L_\infty$ alg\`ebre dite la $L_\infty$ alg\`ebre enveloppante
de $\mathfrak{g}$.\\
\end{prop}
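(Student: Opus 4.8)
Le plan est d'exploiter le caract\`ere colibre de la cog\`ebre $(S^+(\mathfrak g[1]),\Delta)$ \'etablie \`a la proposition pr\'ec\'edente. Une cod\'erivation d'une cog\`ebre colibre est enti\`erement d\'etermin\'ee par sa corestriction $\mathrm{pr}_1\circ Q:S^+(\mathfrak g[1])\to\mathfrak g[1]$ \`a l'espace des g\'en\'erateurs, et la formule donnant $Q$ n'est rien d'autre que l'unique prolongement en cod\'erivation de la famille $(Q_k)$. Je commencerais donc par v\'erifier que $Q$ est bien une cod\'erivation de $\Delta$, soit en invoquant cette propri\'et\'e universelle, soit par le calcul direct confrontant $\Delta\circ Q$ et $(Q\otimes id+id\otimes Q)\circ\Delta$. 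Je contr\^olerais ensuite les degr\'es : $Q_1=d$ est de degr\'e $1$, et $Q_2$, qui est le d\'ecal\'e $\phi'$ du crochet $|~|$-antisym\'etrique de degr\'e $0$ selon la r\`egle rappel\'ee plus haut, est $deg$-sym\'etrique de degr\'e $1$ ; comme $Q_k=0$ pour $k\geq 3$, l'op\'erateur $Q$ est homog\`ene de degr\'e $1$.

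Le c\oe ur de la preuve est l'\'equation de structure $Q^2=0$. Puisque $Q$ est une cod\'erivation de degr\'e $1$, $Q^2=\tfrac12[Q,Q]$ en est encore une, et s'annule donc si et seulement si sa corestriction $\mathrm{pr}_1\circ Q^2$ est nulle. En composant deux fois la formule de $Q$ et en ne retenant que la composante dans $\mathfrak g[1]$, on obtient
$$
\mathrm{pr}_1\circ Q^2(x_1\dots x_n)=\sum_{\begin{smallmatrix}I\cup J=\{1,\dots,n\}\\ I\neq\emptyset\end{smallmatrix}}\varepsilon_x\left(\begin{smallmatrix}x_1\dots x_n\\ x_Ix_J\end{smallmatrix}\right)Q_{1+\#J}\big(Q_{\#I}(x_I)\,x_J\big).
$$
Comme seuls $Q_1$ et $Q_2$ sont non nuls, les termes survivants imposent $\#I\in\{1,2\}$ et $\#J\in\{0,1\}$, donc $n\leq 3$. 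La corestriction de $Q^2$ se concentre ainsi en arit\'es $1$, $2$ et $3$, que je traiterais s\'epar\'ement : en arit\'e $1$ on trouve $Q_1Q_1=d^2=0$ ; en arit\'e $2$, la somme $Q_1Q_2+Q_2(Q_1\otimes id+id\otimes Q_1)$ exprime, une fois les signes de d\'ecalage r\'etablis, que $d$ est une d\'erivation du crochet ; en arit\'e $3$, la somme des trois termes $Q_2(Q_2\otimes id)$ reproduit exactement l'identit\'e de Jacobi. Chacune de ces trois identit\'es \'etant un axiome de l'alg\`ebre de Lie diff\'erentielle gradu\'ee $(\mathfrak g,[~,~],d)$, on conclut $\mathrm{pr}_1\circ Q^2=0$, puis $Q^2=0$.

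Le point d\'elicat sera la gestion des signes. Le d\'ecalage $deg(x)=|x|-1$ et les signatures gradu\'ees $\varepsilon_x$ figurant dans la formule de $Q$ doivent \^etre suivis avec soin, en particulier pour montrer que la composante d'arit\'e $3$ redonne la forme pr\'ecise de l'identit\'e de Jacobi donn\'ee dans la d\'efinition, et que la composante d'arit\'e $2$ restitue la relation $d[x,y]=[dx,y]+(-1)^{|x|}[x,dy]$ avec le bon signe. Pour cela j'utiliserais syst\'ematiquement la correspondance entre une application $|~|$-antisym\'etrique et sa d\'ecal\'ee $deg$-sym\'etrique, ainsi que le fait que $\varepsilon_x$ est un morphisme de $S_n$, ce qui permet de factoriser proprement les signes lors du regroupement des termes issus de la double application de $Q$.
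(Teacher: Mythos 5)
Votre d\'emarche est correcte et c'est la preuve standard : le papier ne d\'emontre pas cette proposition (elle est cit\'ee de [AAC]), mais l'argument attendu est exactement celui que vous proposez --- prolongement universel en cod\'erivation de la cog\`ebre colibre $(S^+(\mathfrak g[1]),\Delta)$, r\'eduction de $Q^2=0$ \`a l'annulation de la corestriction $\mathrm{pr}_1\circ Q^2$ (licite car $Q^2=\tfrac12[Q,Q]$ est encore une cod\'erivation), puis identification des composantes d'arit\'e $1$, $2$ et $3$ avec $d^2=0$, la propri\'et\'e de d\'erivation de $d$ et l'identit\'e de Jacobi. Votre d\'ecompte des arit\'es survivantes ($\#I\in\{1,2\}$, $\#J\in\{0,1\}$, donc $n\leq 3$) est juste, et la seule partie laiss\'ee en chantier, la v\'erification fine des signes via le d\'ecalage $deg(x)=|x|-1$, est bien identifi\'ee comme le point d\'elicat.
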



\subsubsection{Algèbre commutative à homotopie près ($C_\infty$ alg\`ebre)}

\

\begin{defn}

\

On dit que $(A,\pt, d)$ est une algèbre commutative différentielle graduée si :
\begin{itemize}
\item $\pt$ est une loi de degr\'e 0 et commutative:
$$
\forall x,y\in A, ~x\pt y=(-1)^{|x||y|}y\pt x,
$$

\item $\pt$ est associatif :
$$
\forall x,y,z\in A, ~(x\pt y)\pt z=x\pt (y\pt z),
$$

\item $d$ est une diff\'erentielle de degr\'e 1, de carr\'e nul, et telle que :
$$
\forall x,y\in A, ~d(x\pt y)=dx\pt y+(-1)^{|x|}x\pt dy.
$$
\end{itemize}
\end{defn}

Comme l'opérade duale de $Com$ est $Lie$, on construit la
$Com^!$-cog\`ebre colibre ainsi :

\begin{itemize}
\item On considère $A[1]$ muni du degr\'e $deg(x)=|x|-1=x$

\item Une permutation $\sigma\in S_{p+q}$ ($p,q\geq1$) est dite un $(p,q)$-shuffle si elle vérifie :
$$
\sigma(1)<\cdots<\sigma(p) \hskip0.4cm \hbox{et} \hskip0.4cm \sigma(p+1)<\cdots<\sigma(p+q).
$$
On note $Sh(p,q)$ l'ensemble des $(p,q)$-shuffles.

\item On d\'efinit ensuite le produit shuffle sur $\bigotimes^+ A[1]$ par :

$$
sh_{p,q}\left(x_1\otimes...\otimes x_p , x_{p+1}\otimes...\otimes x_{p+q}\right)=\sum_{\sigma\in Sh(p,q)} \varepsilon_{x}(\sigma^{-1}) x_{\sigma^{-1}(1)}\otimes...\otimes x_{\sigma^{-1}(p+q)}.
$$

\item On d\'efinit alors l'espace quotient
$$
\mathcal H=\underline{\bigotimes}^+A[1]=\bigoplus_{n\geq1}~^{\bigotimes^nA[1]}\diagup_{\sum_{p+q=n}Im(sh_{p,q})}.
$$
\end{itemize}

\begin{prop}

\

La $Com^!$-cog\`ebre colibre associ\'ee \`a $A$ est l'espace $\mathcal H$ muni du cocrochet $\delta$ d\'efini par :
$$
\delta(x_1\underline{\otimes}...\underline{\otimes}x_n)= \displaystyle\sum_{j=1}^{n-1}
x_1\underline{\otimes}...\underline{\otimes} x_j\bigotimes x_{j+1}\underline{\otimes}...\underline{\otimes} x_n-\varepsilon_x(\tau)
x_{j+1}\underline{\otimes}...\underline{\otimes} x_n \displaystyle\bigotimes x_1\underline{\otimes}...\underline{\otimes} x_j.
$$
\end{prop}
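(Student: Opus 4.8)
The plan is to establish three things: that the formula for $\delta$ is well defined on the quotient $\mathcal H$, that $(\mathcal H,\delta)$ is a Lie coalgebra (i.e. a $Com^!$-cogèbre), and that it is the cofree such object on $A[1]$. I read the formula as the antisymmetrization $\delta=(\mathrm{id}-\tau)\circ\bar\Delta$ of the reduced deconcatenation coproduct
\[
\bar\Delta(x_1\underline{\otimes}\cdots\underline{\otimes}x_n)=\sum_{j=1}^{n-1}(x_1\underline{\otimes}\cdots\underline{\otimes}x_j)\otimes(x_{j+1}\underline{\otimes}\cdots\underline{\otimes}x_n),
\]
where $\tau$ is the graded volte, so that the sign $-\varepsilon_x(\tau)$ in the statement is exactly the Koszul sign produced by exchanging the two blocks. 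First I would check that $\delta$ passes to $\mathcal H=\bigotimes^+A[1]\big/\sum_{p+q}\mathrm{Im}(sh_{p,q})$. For this it suffices to show that $\bar\Delta$ maps $\mathrm{Im}(sh_{p,q})$ into $\big(\sum\mathrm{Im}\,sh\big)\otimes\bigotimes^+A[1]+\bigotimes^+A[1]\otimes\big(\sum\mathrm{Im}\,sh\big)$, i.e. that deconcatenating a shuffle product $sh_{p,q}(a,b)$ yields a sum of tensors each of which is itself a shuffle product in at least one slot. This is precisely the bialgebra compatibility between the shuffle product and the deconcatenation coproduct. I expect this Hopf-algebraic compatibility to be the main obstacle, because it requires a careful bookkeeping showing that any cut of a shuffled word factors as shuffles of cuts of $a$ and $b$, and all the signs $\varepsilon_x$ must be tracked through the Koszul rule.

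Granting well-definedness, the two Lie-coalgebra axioms follow cleanly. Anti-cocommutativity $\tau\circ\delta=-\delta$ is immediate from $\delta=(\mathrm{id}-\tau)\circ\bar\Delta$ together with $\tau^2=\mathrm{id}$, since $\tau\circ\delta=(\tau-\mathrm{id})\circ\bar\Delta=-\delta$. For the co-Jacobi identity $(\mathrm{id}+\zeta+\zeta^2)\circ(\delta\otimes\mathrm{id})\circ\delta=0$, with $\zeta$ the signed cyclic permutation of the three tensor factors, I would compute directly: because $\bar\Delta$ is coassociative, both $(\delta\otimes\mathrm{id})\circ\delta$ and its cyclic rotations are supported on the iterated deconcatenation of a word into three consecutive blocks, and the signed sum over the six orderings of these blocks cancels in pairs. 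This mirrors the direct verification carried out for the companion cogèbre $S^+(\mathfrak g[1])$ and can be cited from \cite{[AMM]}.

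Finally, for the cofreeness I would invoke the operadic framework of Section~\ref{sec1}: since $Com^!=Lie$, the $Com^!$-cogèbre colibre on $A[1]$ is $\bigoplus_n Lie(n)\otimes_{S_n}(A[1])^{\otimes n}$, and the identification with $\mathcal H$ reduces to the classical isomorphism $\bigotimes^n A[1]\big/\sum_{p+q=n}\mathrm{Im}(sh_{p,q})\cong Lie(n)\otimes_{S_n}(A[1])^{\otimes n}$, which expresses that the cokernel of the shuffles is exactly the Lie part in each weight. Equivalently, I would verify the universal property directly: the projection $\pi\colon\mathcal H\to A[1]$ onto the weight-one component is such that every conilpotent Lie-coalgebra morphism with target $A[1]$ lifts uniquely through $\pi$ to a coalgebra morphism, the lift being constructed weight by weight. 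Conilpotency of $\delta$, which holds because each element of $\mathcal H$ is annihilated by a sufficiently high iterate of $\delta$ for the weight grading, guarantees both convergence and uniqueness of this lift.
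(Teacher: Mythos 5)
The paper states this proposition without proof (it is recalled as known, the verifications of coantisymmetry and coJacobi being deferred to the references \cite{[AMM]}, \cite{[AAC]}), so there is no in-paper argument to compare yours with; judged on its own, your outline follows the standard route but its first step, which you yourself single out as the main obstacle, contains a genuine error. You reduce the well-definedness of $\delta$ on $\mathcal H$ to the claim that the deconcatenation $\bar\Delta$ maps $\mathrm{Im}(sh_{p,q})$ into $\big(\sum\mathrm{Im}\,sh\big)\otimes\bigotimes^+A[1]+\bigotimes^+A[1]\otimes\big(\sum\mathrm{Im}\,sh\big)$. That claim is false: cutting $sh_{p,q}(a,b)$ produces, besides the terms having a nontrivial shuffle in at least one slot, the two extremal terms $a\otimes b$ and $\varepsilon_x(\tau)\,b\otimes a$ (the cuts separating all of $a$ from all of $b$), and neither factor of these is a shuffle product. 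Already for $p=q=1$ one has $\bar\Delta\big(x\otimes y+(-1)^{xy}y\otimes x\big)=x\otimes y+(-1)^{xy}y\otimes x$, which cannot lie in the ideal because every $sh_{p,q}$ with $p,q\geq 1$ lands in tensor length at least $2$. So $\bar\Delta$ itself does \emph{not} descend to $\mathcal H$, and the step as you state it would fail.

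What rescues the statement is precisely the antisymmetrization you treat as an afterthought: modulo $\big(\sum\mathrm{Im}\,sh\big)\otimes\bigotimes^+A[1]+\bigotimes^+A[1]\otimes\big(\sum\mathrm{Im}\,sh\big)$ one has $\bar\Delta(sh_{p,q}(a,b))\equiv a\otimes b+\varepsilon_x(\tau)\,b\otimes a$, a $\tau$-invariant class, which is therefore annihilated by $\mathrm{id}-\tau$; hence $\delta=(\mathrm{id}-\tau)\circ\bar\Delta$, but not $\bar\Delta$, maps shuffles into the ideal. Once the well-definedness step is corrected in this way, the rest of your outline is sound: coantisymmetry is immediate from $\tau^2=\mathrm{id}$, the coJacobi identity follows from coassociativity of $\bar\Delta$ by the signed cancellation over the orderings of the three blocks, and the identification of the shuffle cokernel in each arity with $Lie(n)\otimes_{S_n}(A[1])^{\otimes n}$ (Ree's theorem, i.e.\ the concrete form of $Com^!=Lie$), together with conilpotency for the weight grading, gives cofreeness by the universal property exactly as you describe.
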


Le coproduit $\delta$ est coantisym\'etrique :
$$
\tau\circ\delta=-\delta
$$
et v\'erifie l'identit\'e de coJacobi (ici $\tau_{23}=id\otimes\tau$ et $\tau_{12}=\tau\otimes id$) :
$$
\Big(id^{\otimes3}+\tau_{12}\circ\tau_{23}+\tau_{23}\circ\tau_{12}\Big)\circ(\delta\otimes id)\circ\delta=0.
$$
On dira que $\left(\mathcal H,\delta\right)$ est une cog\`ebre de Lie de cocrochet $\delta$.\\

Une structure de $C_\infty$ alg\`ebre sur $A$ est la cog\`ebre de Lie $\Big(\mathcal H,\delta, Q\Big)$ munie d'une cod\'erivation $Q$ de $\delta$ de degr\'e $1$ et de carré nul. En particulier\\

\begin{prop}{\rm \cite{[AAC]}}

\

Si $(A, \pt,d)$ est une algèbre commutative diff\'erentielle
gradu\'ee, on pose :
$$
Q_1(x)=dx,~Q_2(x\underline\otimes y)=(-1)^xx\pt y, ~Q_k=0, \forall k\geq3,
$$
et
$$
Q(x_1\underline{\otimes}...\underline{\otimes}x_n)=\displaystyle\sum_{\begin{smallmatrix}1\leq r\leq n\\ 0\leq j\leq n-r\end{smallmatrix}}(-1)^{\sum_{i\leq j}x_i} x_1\underline{\otimes}...\underline{\otimes} x_j\underline\otimes Q_r(x_{j+1}\underline{\otimes}...\underline{\otimes}
x_{j+r})\underline\otimes x_{j+r+1}\underline{\otimes}...\underline{\otimes} x_n.
$$

Alors $\left(\mathcal H,\delta,Q\right)$ est une $C_\infty$ alg\`ebre dite la $C_\infty$ alg\`ebre enveloppante de $A$.\\
\end{prop}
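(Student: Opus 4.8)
The plan is to verify the three defining properties of a $C_\infty$ structure on $(\mathcal H,\delta)$: that $Q$ descends to the quotient $\mathcal H$, that it is une cod\'erivation of $\delta$ of degr\'e $1$, and that $Q^2=0$. The degr\'e is immediate, since $Q_1=d$ is of degr\'e $1$ and the d\'ecalage $deg=|~|-1$ turns the degr\'e $0$ product $\pt$ into the degr\'e $1$ map $Q_2$. The guiding principle throughout is the cofreeness of $(\mathcal H,\delta)$ as a $Com^!$-cog\`ebre: a cod\'erivation is determined by, and uniquely extends, its corestrictions $Q_r:\mathcal H_r\to A[1]$ to the cog\'en\'erateurs, and the displayed sum is precisely this extension of the famille $(Q_r)$.

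First I would check that $Q$ is well defined on $\mathcal H=\underline{\bigotimes}^+A[1]$, i.e. that it preserves the shuffle ideal $\sum_{p+q=n}Im(sh_{p,q})$. Because only $Q_1$ and $Q_2$ are nonzero, it suffices that these corestrictions vanish on shuffles. For $Q_1=d$ this is trivial, and for $Q_2$ it is exactly la commutativit\'e of $\pt$: evaluating on $sh_{1,1}(x,y)=x\otimes y+(-1)^{xy}y\otimes x$ gives $(-1)^xx\pt y+(-1)^{xy+y}y\pt x$, which vanishes after rewriting $y\pt x=(-1)^{|y||x|}x\pt y$ with $|x|=x+1$. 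The higher shuffles $sh_{p,q}$, $p+q\ge 3$, are killed automatically since $Q_r=0$ for $r\ge3$, while for $r=1,2$ the compatibility just checked propagates through the insertion formula, so that $Q$ maps the shuffle ideal into itself; hence $Q$ descends to $\mathcal H$.

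Next I would establish that $Q$ is une cod\'erivation, $\delta\circ Q=(Q\otimes id+id\otimes Q)\circ\delta$. This follows from the universal property recalled above, but can also be checked directement: applying $\delta$ to the defining sum for $Q$, the cocrochet $\delta$ cuts the mot $x_1\underline\otimes\cdots\underline\otimes x_n$ into two facteurs, and one sorts the resulting terms according to whether the inserted bloc $Q_r(x_{j+1}\underline\otimes\cdots\underline\otimes x_{j+r})$ lands entirely in the left or in the right facteur; the $\varepsilon_x$ signs produced by the d\'ecalage match those produced by $\delta$, yielding exactly $Q\otimes id+id\otimes Q$ applied to $\delta$.

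Finally, for $Q^2=0$: since $Q$ is une cod\'erivation of degr\'e $1$, so is $Q^2=\tfrac12[Q,Q]$, and by cofreeness it vanishes if and only if its corestriction $\pi_1\circ Q^2$ does. This corestriction decomposes par arit\'e into three identities, each \'equivalent to one axiom of l'alg\`ebre commutative diff\'erentielle gradu\'ee: arit\'e $1$ gives $d^2=0$; arit\'e $2$ gives la r\`egle de Leibniz $d(x\pt y)=dx\pt y+(-1)^{|x|}x\pt dy$; arit\'e $3$ gives l'associativit\'e $(x\pt y)\pt z=x\pt(y\pt z)$. The main obstacle is the sign bookkeeping in the arit\'e $3$ term: one must check that the d\'ecalage signs $(-1)^{\sum_{i\le j}x_i}$ combine so that the two ways of inserting $Q_2$ inside $Q_2$ reduce exactly to the (unsigned) relation d'associativit\'e. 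Once these three corestrictions are seen to vanish under the hypotheses, $Q^2=0$ and $(\mathcal H,\delta,Q)$ is the announced $C_\infty$ alg\`ebre.
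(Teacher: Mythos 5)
The paper does not actually prove this proposition: it is quoted from the reference [AAC] and used as a black box, so there is no internal proof to compare against. Your outline is the standard argument and is essentially correct: degree count, well-definedness on the quotient by shuffles, coderivation property via cofreeness of the $Com^!$-cog\`ebre, and reduction of $Q^2=0$ to its corestriction $\pi_1\circ Q^2$, whose arity $1$, $2$, $3$ components are exactly $d^2=0$, the Leibniz rule, and associativity of $\pt$ (the arity $3$ component involves only $Q_2\circ Q_2$ terms since $Q_3=0$, so no $d$ appears there, as you implicitly use). The one place where you are looser than you should be is the claim that the vanishing of $Q_2$ on $sh_{1,1}$ ``propagates through the insertion formula'' so that $Q$ preserves the shuffle ideal of $T^+(A[1])$: as stated this is an assertion, not an argument. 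The clean way to close it, consistent with your own guiding principle, is to define $Q$ directly on $\mathcal H$ as the unique coderivation of the cofree Lie cog\`ebre $(\mathcal H,\delta)$ extending the corestrictions $Q_1,Q_2:\mathcal H_{1},\mathcal H_2\to A[1]$, which are well defined on the quotient precisely because they kill shuffles; one then verifies that the displayed insertion formula represents this extension. With that adjustment the proof is complete and matches what [AAC] does.
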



\section{Algèbre de Gerstenhaber à homotopie près ($G_\infty$ alg\`ebre)}

\

Une alg\`ebre de Gerstenhaber est un espace vectoriel gradu\'e, muni de deux lois de degr\'es diff\'erents et de sym\'etries oppos\'ees. Voir \cite{[G],[A]}.\\

\begin{defn}

\

On dit que $(\mathcal{G},\wedge, [~,~],d)$ est une alg\`ebre de Gerstenhaber diff\'erentielle graduée si :
\begin{itemize}
\item $(\mathcal{G},\wedge,d)$ est une algèbre commutative diff\'erentielle graduée, $|\wedge|=0$,

\item $(\mathcal{G}[1],[~,~],d)$ est une algèbre de Lie diff\'erentielle gradu\'ee, ~~$|[~,~]|=-1$,

\item  La relation de compatibilité suivante (relation de Leibniz) entre $\wedge$ et $[~,~]$ est v\'erifi\'ee :
$$
[\alpha,\beta\wedge\gamma]=[\alpha,\beta]\wedge\gamma+(-1)^{|\beta|(|\alpha|-1)}\beta\wedge[\alpha,\gamma].
$$
\end{itemize}
\end{defn}

\vskip0.2cm

Le prototype des algèbres de Gerstenhaber est l'espace $T_{poly}(\mathbb{R}^d)$ des multichamps de vecteurs muni du crochet de Schouten $[~,~]$ et du
produit extérieur $\wedge$.\\

Pour d\'efinir la notion d'alg\`ebre de Gerstenhaber \`a homotopie pr\`es, on proc\`ede en trois \'etapes.

On construit d'abord la cog\`ebre de Lie $(\mathcal H,\delta)$ associ\'ee \`a $(\mathcal G,\wedge,d)$ comme ci-dessus. On prolonge \`a $\mathcal H$ l'application $d+\wedge$ en une cod\'erivation de $\delta$ de carr\'e nul, que l'on note maintenant $D$.

On prolonge ensuite le crochet $[~,~]$ de $\mathcal G[1]$ \`a $\mathcal H$ en posant (voir \cite{[F],[AAC]}), si $X=\alpha_1\underline{\otimes}...\underline{\otimes}\alpha_p$, $Y=\alpha_{p+1}\underline{\otimes}...\underline{\otimes}\alpha_{p+q}$ :
$$
[X,Y]=\hskip-0.2cm\sum_{\begin{smallmatrix}\sigma\in Sh(p,q)\\
k,\sigma^{-1}(k)\leq p<\sigma^{-1}(k+1)\end{smallmatrix}}\hskip-0.1cm
\varepsilon_\alpha(\sigma^{-1})\alpha_{\sigma^{-1}(1)}\underline{\otimes}\dots\underline{\otimes}[\alpha_{\sigma^{-1}(k)},\alpha_{\sigma^{-1}(k+1)}]\underline{\otimes}\dots\underline{\otimes}\alpha_{\sigma^{-1}(p+q)}.
$$
On note $deg(X)=deg(\alpha_1\underline{\otimes}\dots\underline{\otimes}\alpha_p)=\alpha_1+\cdots+\alpha_p=x$. Alors :\\

\begin{prop}

\

$(\mathcal{H},[~,~],D)$ est une alg\`ebre de Lie diff\'erentielle gradu\'ee.\\
\end{prop}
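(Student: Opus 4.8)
The plan is to check the three defining properties of a differential graded Lie algebra for the triple $(\mathcal H,[~,~],D)$, the grading being the shifted degree $\deg$. Since $D$ is the codifferential extending $d+\wedge$ already built for the $C_\infty$ structure, we already know $|D|=1$ and $D^2=0$; what remains is (i) graded antisymmetry of $[~,~]$, (ii) the graded Jacobi identity, and (iii) that $D$ is a graded derivation of the bracket, i.e. $D[X,Y]=[DX,Y]+(-1)^{x}[X,DY]$ with $x=\deg X$. I would dispose of (i) first, then (iii), keeping (ii) for last as it carries the heaviest bookkeeping.

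For antisymmetry, write $X=\alpha_1\underline{\otimes}\cdots\underline{\otimes}\alpha_p$ and $Y=\alpha_{p+1}\underline{\otimes}\cdots\underline{\otimes}\alpha_{p+q}$. The sum defining $[X,Y]$ runs over shuffles $\sigma\in Sh(p,q)$ together with a marked position $k$ at which a letter of $X$ sits immediately to the left of a letter of $Y$. Swapping the roles of $X$ and $Y$ replaces $\sigma$ by its composite with the block transposition carrying the first $p$ letters past the last $q$; combining the induced change of the Koszul sign $\varepsilon_\alpha$ with the antisymmetry $[\beta,\alpha]=-(-1)^{\alpha\beta}[\alpha,\beta]$ of the cogenerator bracket produces exactly the factor $-(-1)^{xy}$. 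This is a finite sign verification.

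For the derivation property I would split $D$ by the arity of its components, $D=D_1+D_2$, where $D_1$ is the coderivation extension of $d$ and $D_2$ that of $\wedge$ (with $Q_2(u\underline{\otimes}v)=(-1)^{u}u\pt v$). Expanding $D_1[X,Y]$, the terms in which $d$ hits a letter outside the bracketed pair reassemble into the $d$-parts of $[D_1X,Y]$ and $(-1)^{x}[X,D_1Y]$, while the single term where $d$ hits $[\alpha_{\sigma^{-1}(k)},\alpha_{\sigma^{-1}(k+1)}]$ is absorbed using that $d$ is a derivation of the bracket on $\mathcal G[1]$. For $D_2$ the genuinely new contributions are those where $\wedge$ multiplies one of the two bracketed letters by an adjacent letter; these are controlled precisely by the Gerstenhaber Leibniz relation $[\alpha,\beta\wedge\gamma]=[\alpha,\beta]\wedge\gamma+(-1)^{|\beta|(|\alpha|-1)}\beta\wedge[\alpha,\gamma]$, whereas the remaining $\wedge$-terms match the $\wedge$-parts of $[D_2X,Y]$ and $(-1)^{x}[X,D_2Y]$. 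Summing the two arities gives (iii); the crux is verifying that the shuffle recombinations carry the correct Koszul signs.

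The Jacobi identity is where I expect the main difficulty. The cleanest route reuses the cofreeness of $(\mathcal H,\delta)$ that already made the extension of the bracket unique: $[~,~]$ is characterized as the unique bicoderivation of the cocrochet $\delta$ corestricting on cogenerators to the Gerstenhaber bracket, so the graded Jacobiator $J(X,Y,Z)=(-1)^{xz}[[X,Y],Z]+(-1)^{yx}[[Y,Z],X]+(-1)^{zy}[[Z,X],Y]$ is itself determined by its corestriction to cogenerators. That corestriction vanishes because the bracket on $\mathcal G[1]$ satisfies Jacobi, whence $J\equiv0$ on all of $\mathcal H$. If making the two-variable cofree argument fully rigorous turns out to be awkward, the fallback is a direct computation: expand both nested brackets as sums over pairs of shuffles with two marked bracketing sites, and show that after reorganizing these double-shuffle sums the obstruction collapses onto the basic Jacobi identity on $\mathcal G[1]$. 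The delicate point, in either approach, is the combinatorial regrouping of the double-shuffle terms together with their signs.
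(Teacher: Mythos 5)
You should first note that the paper does not actually prove this proposition: the extension of the bracket to $\mathcal H$ and the fact that $(\mathcal{H},[~,~],D)$ is a differential graded Lie algebra are quoted from [F] and [AAC], so there is no in-paper proof to compare with. The closest analogue is the detailed proof given in Section 6 for $(\mathcal{H},R_2,D)$ in the pre-Gerstenhaber setting, and that proof follows exactly your plan for the derivation property --- split $D$ and the bracket into symbolic components ($D=D_2+m_2$, $R_2=\diamond+[~,~]$ there) and match contributions term by term, the compatibility relations between the two laws entering only where the product touches a bracketed letter --- while it establishes the (pre-)Jacobi identity by the direct double-shuffle computation you keep as a fallback.

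Your outline is essentially sound, but one step is mis-stated. In $D_2[X,Y]$ there are also terms in which $Q_2$ couples a letter of $X$ with an adjacent letter of $Y$ inside the shuffled word; these have no counterpart in $[D_2X,Y]$ or $(-1)^{x}[X,D_2Y]$, where the product always stays internal to $X$ or to $Y$, so they do not match the $\wedge$-parts of anything on the right-hand side. They must instead cancel pairwise among themselves, using the symmetry of $\pt$ and the shuffle combinatorics; this is precisely what items 4 and 6 of the paper's proof for $R_2$ do, where the terms with $\sigma^{-1}(k)\in\{2,\dots,p\}$ and $\sigma^{-1}(k+1)\in\{p+2,\dots,p+q\}$ (and vice versa) \emph{se simplifient entre eux}. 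That cross-cancellation has to be added to your list of verifications. For the Jacobi identity, the cofreeness route is viable provided you establish explicitly (i) that the iterated bracket $[[~,~],~]$ is again a multi-coderivation of $\delta$ and (ii) that a multi-coderivation of the cofree Lie coalgebra $(\mathcal H,\delta)$ is determined by its corestriction to the cogenerators $\mathcal G[1]$ (the arity count then reduces the vanishing of the corestricted Jacobiator to Jacobi on $\mathcal G[1]$); this is Fresse's approach, whereas [AAC] and the present paper carry out the direct regrouping of double shuffles that you describe as the fallback.
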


On peut ensuite construire la $L_\infty$ algèbre enveloppante $\left(S^+(\mathcal{H}[1]),\Delta,Q\right)$ de $\mathcal{H}$.

On consid\`ere l'espace $\mathcal{H}[1]$, muni du degré $deg'(X)=deg(X)-1=x'$. On pose $\ell_2(X.Y)=(-1)^{x'}[X,Y]$, et on prolonge $D$ et $\ell_2$ en $m$ et $\ell$ \`a $S^+(\mathcal{H}[1])$ par :
$$
m(X_1\dots X_n)=\sum_{j=1}^{n}\varepsilon_{x'}\left(\begin{smallmatrix}x_1\dots x_n\\
x_j,x_1,\dots\widehat{j}\dots x_n\end{smallmatrix}\right)D(X_j).X_1\dots\widehat{j}\dots X_n,
$$
et
$$
\ell(X_1\dots X_n)=\sum_{i<j}\varepsilon_{x'}\left(\begin{smallmatrix}x_1\dots x_n\\
x_i,x_j,x_1,\dots\widehat{i,j}\dots x_n\end{smallmatrix}\right)\ell_{2}(X_i.X_j).X_1...\widehat{ij}...X_n.
$$

On obtient une cod\'erivation $Q=(m+\ell)$ de $\Delta$ vérifiant $deg'(Q)=1$ et $Q^2=0$.

Enfin (voir \cite{[BGHHW],[AAC]}), le cocrochet $\delta$ de la $C_\infty$ alg\`ebre $(\mathcal{H},\delta,D)$ se prolonge en l'application $\kappa$, d\'efinie sur
$S^+(\mathcal{H}[1])$, par :
\begin{align*}
&\kappa(X_1\dots X_n)=\sum_{\begin{smallmatrix}1\leq s\leq n\\
I\cup J=\{1,\dots,n\}\setminus\{s\}\end{smallmatrix}}
(-1)^{\sum_{i<s}x_i'}\sum_{\begin{smallmatrix}U_s\underline{\otimes}
V_s=X_s\\ U_s,V_s\neq\emptyset\end{smallmatrix}
}(-1)^{u_s'}\times\cr&\times\left(\varepsilon_{x'}\left(\begin{smallmatrix}
x_1\dots x_n\\ x_I~u_s~v_s~x_J\end{smallmatrix}\right)X_I.
U_s\bigotimes V_s . X_J+\varepsilon_{x'}\left(\begin{smallmatrix}
x_1\dots x_n\\ x_I~v_s~u_s~x_J\end{smallmatrix}\right)X_I.
V_s\bigotimes U_s . X_J\right),
\end{align*}
avec \begin{align*}\varepsilon_{x'}\left(\begin{smallmatrix}
x_1\dots x_n\\ x_I~u_s~v_s~x_J\end{smallmatrix}\right)=
\varepsilon_{x'}\left(\begin{smallmatrix} x_1\dots x_n\\
x_I~x_s~x_J\end{smallmatrix}\right)(-1)^{\sum_{\begin{smallmatrix}i<s\\ i\in
J\end{smallmatrix}}x_i'}(-1)^{\sum_{\begin{smallmatrix}i>s\\ i\in I\end{smallmatrix}}x_i'}.
\end{align*}
$\kappa$ est cosym\'etrique et v\'erifie les relations de coJacobi
et coLeibniz avec $\Delta$ :
$$
(id\otimes\Delta)\circ\kappa=(\kappa\otimes
id)\circ\Delta+\tau_{12}'\circ(id\otimes\kappa)\circ\Delta
$$
ou
$$
(\Delta\otimes id)\circ\kappa=(id\otimes
\kappa)\circ\Delta+\tau_{23}'\circ(\kappa\otimes id
)\circ\Delta.
$$
De plus, l'op\'erateur $Q=m+\ell$ est une cod\'erivation de $\kappa$.\\

\begin{defn}{\rm \cite{[AAC]}}

\

Une structure de $G_\infty$ alg\`ebre sur $\mathcal G$ est la donn\'ee d'une codiff\'erentielle $Q$ de la bicog\`ebre $\left(S^+(\underline{\displaystyle\bigotimes}^+(\mathcal{G}[1])[1]),\Delta,\kappa\right)$ telle que $Q$ est une cod\'erivation de $\Delta$ et de $\kappa$, de degr\'e $1$ et de carr\'e nul.\\
\end{defn}

En particulier, si $(\mathcal G,\wedge,[~,~],d)$ est une algèbre de Gerstenhaber différentielle, alors $\left(S^+(\underline{\displaystyle\bigotimes}^+(\mathcal{G}[1])[1]),\Delta,\kappa,Q=m+\ell\right)$ est une $G_\infty$ algèbre dite la $G_\infty$ algèbre enveloppante de $\mathcal G$.\\

\section{Algèbre pré-Lie et pr\'e-commutative à homotopie près}

\

\subsection{Algèbre pré-Lie à homotopie près ($preL_\infty$ alg\`ebre)}

\

La notion d'alg\`ebre pr\'e-Lie a \'et\'e \'etudi\'ee par Livernet et Chapoton (\cite{[Liv],[ChL]}). Une loi pr\'e-Lie est une loi binaire dont l'antisym\'etris\'e est un crochet de Lie. Plus pr\'ecis\'ement :\\

\begin{defn}

\

Une alg\`ebre pré-Lie (\`a droite) gradu\'ee $(V,\diamond)$ est un espace gradu\'e $V$ muni d'un produit $\diamond$ de degré $0$
vérifiant :
$$
\forall x,y,z\in V,~~(x\diamond y)\diamond z-x\diamond (y\diamond z)=(-1)^{|y||z|}\big((x\diamond z)\diamond y -x\diamond(z\diamond y)\big).
$$

Si de plus, $d:V\longrightarrow V$ est une diff\'erentielle de degré $1$ telle que
$$
d(x\diamond y)=dx\diamond y+(-1)^{1.|x|}x\diamond dy,
$$
on dira que $(V,\diamond,d)$ est une alg\`ebre pré-Lie diff\'erentielle gradu\'ee.\\
\end{defn}

Cette structure est associ\'ee \`a une op\'erade quadratique, l'op\'erade $preLie$. L'op\'erade duale, d\'etermin\'ee par \cite{[ChL]}, est l'op\'erade permutative : $preLie^!=Perm$.

Rappelons qu'une alg\`ebre permutative (\`a droite) $(V,.)$ est un espace gradu\'e $V$ muni d'un produit . de degré $0$, vérifiant :
$$
\forall x,y,z\in V,~~x. (y. z)=(-1)^{|y||z|}x.(z.y).
$$

\begin{defn}

\

Une $preLie^!$-cog\`ebre (ou cogèbre permutative) est un espace vectoriel gradué $\mathcal{C}$ muni d'une comultiplication $\Delta:\mathcal{C}\longrightarrow\mathcal{C}\otimes\mathcal{C}$ de degré $0$ vérifiant :
$$
(id\otimes \Delta)\circ \Delta= \tau_{23}\circ(id\otimes \Delta)\circ \Delta.
$$
\end{defn}

\begin{prop} {\rm \cite{[ChL]}}

\

Soit $V$ un espace vectoriel gradué. Alors la cog\`ebre permutative colibre associ\'ee \`a $V[1]$ est $\Big(V[1]\otimes S(V[1]),\Delta\Big)$ o\`u $\Delta$ est d\'efini par $\Delta (x\otimes 1)=0$ et :
$$
\Delta(x_0\otimes x_1\dots x_n)= \displaystyle\sum_{\begin{smallmatrix}
0\leq k\leq n-1 \\ \sigma\in Sh_{k,1,n-k-1}\end{smallmatrix}}\varepsilon_x(\sigma)x_0\otimes(x_{\sigma(1)}\dots
x_{\sigma(k)})\bigotimes x_{\sigma(k+1)}\otimes(x_{\sigma(k+2)}\dots x_{\sigma(n)}).
$$
(Ici, $Sh_{k,1,n-k-1}$ est l'ensemble des permutations $\sigma$ de $S_n$ telles que $\sigma(1)<\dots<\sigma(k)$ et $\sigma(k+2)<\dots<\sigma(n)$).\\
\end{prop}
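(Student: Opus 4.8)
Le plan est d'\'etablir deux faits : que le couple $\big(V[1]\otimes S(V[1]),\Delta\big)$ est bien une cog\`ebre permutative, puis qu'il est colibre, c'est-\`a-dire qu'il v\'erifie la propri\'et\'e universelle caract\'erisant l'objet colibre parmi les cog\`ebres permutatives (conilpotentes). J'\'ecris un \'el\'ement de $V[1]\otimes S(V[1])$ sous la forme $x_0\otimes x_1\cdots x_n$, en pensant \`a $x_0$ comme une t\^ete distingu\'ee et \`a $x_1\cdots x_n$ comme une queue non ordonn\'ee (gradu\'ee-sym\'etrique) ; toute l'id\'ee de la formule pour $\Delta$ est qu'elle conserve $x_0$ comme t\^ete du facteur de gauche, promeut l'un des $x_{\sigma(k+1)}$ restants en t\^ete du facteur de droite, et r\'epartit les $n-1$ autres lettres entre les deux queues sym\'etriques, la somme sur $Sh_{k,1,n-k-1}$ et le signe de Koszul $\varepsilon_x(\sigma)$ rendant le r\'esultat ind\'ependant des ordres choisis dans $S(V[1])$.

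Je v\'erifierais d'abord que $\Delta$ est bien d\'efini : chaque terme a le m\^eme degr\'e d\'ecal\'e que $x_0\otimes x_1\cdots x_n$, donc $|\Delta|=0$, et comme les queues sont prises dans $S(V[1])$ l'expression ne d\'epend pas de l'ordre interne des blocs $A=\{\sigma(1),\dots,\sigma(k)\}$ et $B=\{\sigma(k+2),\dots,\sigma(n)\}$. Puis je v\'erifierais la relation $(id\otimes\Delta)\circ\Delta=\tau_{23}\circ(id\otimes\Delta)\circ\Delta$ par un calcul direct. En appliquant $\Delta$ deux fois (la seconde sur le facteur de droite) on obtient des termes de la forme $(x_0\otimes x_A)\otimes(x_j\otimes x_{B_1})\otimes(x_{j'}\otimes x_{B_2})$, index\'es par le choix de deux t\^etes secondaires distinctes $x_j,x_{j'}$ parmi $x_1,\dots,x_n$ et par une r\'epartition des lettres restantes entre trois queues sym\'etriques. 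L'op\'erateur $\tau_{23}$ \'echange les deux blocs portant les t\^etes secondaires, et l'assertion est pr\'ecis\'ement que la double somme est sym\'etrique sous cet \'echange : \`a chaque terme on associe celui o\`u les r\^oles de $j$ et $j'$ sont \'echang\'es, les signes de Koszul et de battage se combinant pour donner l'\'egalit\'e. C'est l'ombre alg\'ebrique du fait qu'apr\`es la t\^ete primaire $x_0$, toute l'information restante vit dans l'alg\`ebre sym\'etrique et ne porte aucun ordre privil\'egi\'e.

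Pour le caract\`ere colibre je prouverais la propri\'et\'e universelle : en notant $\pi\colon V[1]\otimes S(V[1])\to V[1]$ la projection sur la composante $V[1]\otimes 1$, je dois montrer que pour toute cog\`ebre permutative $(\mathcal C,\Delta_{\mathcal C})$ et toute application lin\'eaire $\phi\colon\mathcal C\to V[1]$ de degr\'e $0$, il existe un unique morphisme de cog\`ebres permutatives $\Phi\colon\mathcal C\to V[1]\otimes S(V[1])$ tel que $\pi\circ\Phi=\phi$. La construction reproduit par dualit\'e l'alg\`ebre permutative libre sur $V[1]$, dont le produit conserve la racine de gauche et absorbe le facteur de droite dans la queue sym\'etrique ; on pose ainsi $\Phi=\sum_{n\ge0}\big(\phi\otimes\mathrm{sym}(\phi^{\otimes n})\big)\circ\Delta_{\mathcal C}^{(n)}$, o\`u $\Delta_{\mathcal C}^{(n)}$ est le coproduit it\'er\'e \`a droite et $\mathrm{sym}$ projette les $n$ derniers facteurs tensoriels dans $S^n(V[1])$. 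On v\'erifie alors $\Delta\circ\Phi=(\Phi\otimes\Phi)\circ\Delta_{\mathcal C}$, ainsi que l'unicit\'e (tout $\Psi$ v\'erifiant $\pi\circ\Psi=\phi$ co\"incide avec $\Phi$), les deux par r\'ecurrence sur le nombre de facteurs tensoriels, l'\'etape inductive se ramenant \`a $\pi\circ\Phi=\phi$ et \`a l'identit\'e de morphisme ; la condition $\Delta(x\otimes1)=0$ exprime que $V[1]\otimes1$ est l'espace des cog\'en\'erateurs.

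Le point technique essentiel, et l'obstacle principal que j'anticipe, est que l'unique relation permutative force d\'ej\`a les coproduits it\'er\'es \`a droite $\Delta_{\mathcal C}^{(n)}$ \`a \^etre gradu\'es-sym\'etriques en leurs $n$ derniers facteurs : c'est ce qui garantit \`a la fois que $\mathrm{sym}\circ\phi^{\otimes n}$ produit un \'el\'ement bien d\'efini de $S^n(V[1])$ et que $\Phi$ commute \`a $\Delta$. \'Etablir cette sym\'etrie demande d'appliquer la relation $\tau_{23}(id\otimes\Delta)\Delta=(id\otimes\Delta)\Delta$ \`a chaque paire de positions adjacentes de la queue (les transpositions adjacentes engendrant $S_n$) en suivant les signes de Koszul, et c'est l\`a que r\'eside le vrai travail ; les questions de degr\'e et de convergence se traitent en raisonnant degr\'e par degr\'e (ou dans la cat\'egorie conilpotente), de sorte que toutes les sommes sont finies.
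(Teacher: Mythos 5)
Le papier ne d\'emontre pas cette proposition : elle est import\'ee telle quelle de Chapoton--Livernet (citation \cite{[ChL]}), et le texte encha\^{\i}ne directement sur la remarque identifiant $S^{n+1}(V[1])$ \`a un sous-espace de $V[1]\otimes S^n(V[1])$. Votre reconstruction est donc n\'ecessairement une autre route que celle du papier, mais c'est la route standard et elle est correcte dans ses grandes lignes : la v\'erification de la relation permutative par la bijection $(A,j,B_1,j',B_2)\mapsto(A,j',B_2,j,B_1)$ entre les termes de $(id\otimes\Delta)\circ\Delta$ et ceux de $\tau_{23}\circ(id\otimes\Delta)\circ\Delta$ est exactement le bon argument, et la colibert\'e par la propri\'et\'e universelle avec $\Phi=\sum_n(\phi\otimes\mathrm{sym}(\phi^{\otimes n}))\circ\Delta_{\mathcal C}^{(n)}$ est bien la dualisation de la description de l'alg\`ebre permutative libre $V\otimes S(V)$ (produit qui conserve la racine gauche et absorbe le facteur droit dans la queue sym\'etrique), sous l'hypoth\`ese de conilpotence que vous mentionnez \`a juste titre. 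Le seul endroit o\`u votre plan reste en de\c{c}\`a d'une preuve est l'identit\'e de morphisme $\Delta\circ\Phi=(\Phi\otimes\Phi)\circ\Delta_{\mathcal C}$ : la sym\'etrie des $n$ derniers facteurs de $\Delta_{\mathcal C}^{(n)}$, que vous identifiez comme le point technique, n'y suffit pas \`a elle seule ; il faut aussi la forme duale du lemme de forme normale des alg\`ebres permutatives, \`a savoir que tout it\'er\'e parenth\'es\'e du coproduit (en particulier $(\Delta_{\mathcal C}\otimes id)\circ\Delta_{\mathcal C}$, qui appara\^{\i}t quand on calcule la composante de $\Delta\circ\Phi$ dans $(V[1]\otimes S^p)\otimes(V[1]\otimes S^q)$) se r\'e\'ecrit en combinaison d'it\'er\'es \`a droite au moyen de la seule relation $(id\otimes\Delta_{\mathcal C})\circ\Delta_{\mathcal C}=\tau_{23}\circ(id\otimes\Delta_{\mathcal C})\circ\Delta_{\mathcal C}$ ; c'est un compl\'ement de m\^eme nature que ce que vous faites d\'ej\`a, mais il doit \^etre \'enonc\'e et prouv\'e s\'epar\'ement pour que la r\'ecurrence d'existence et d'unicit\'e se referme.
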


\begin{rema}

\

Identifions $S^{n+1}(V[1])$ avec un sous espace de $V[1]\otimes S^n(V[1])$ en posant
$$
x_0\dots x_n=\sum_{\sigma\in S_{n+1}} \varepsilon_x(\sigma^{-1})x_{\sigma(0)}\otimes x_{\sigma(1)}\dots x_{\sigma(n)}
$$
On a alors :
\begin{align*}
\Delta (x_0\otimes x_1\dots x_n)&=\sum_{\begin{smallmatrix}
I\cup J=\{1,\dots,n\}\\ J\neq\emptyset
\end{smallmatrix}}\varepsilon_{x}(\begin{smallmatrix} x_1&\dots&x_n\\ x_I&&x_J\end{smallmatrix})(x_0\otimes x_I)\bigotimes x_J\cr
&=x_0\bigotimes x_1\dots x_n+(-1)^{x_0}x_0\otimes\Delta'(x_1\dots x_n)
\end{align*}
où $\Delta'(x_1\dots x_n)$ est le coproduit de la cogèbre cocommutative colibre $S^{+}(V[1])$.

Il est alors clair que $\Delta(x_0\dots x_n)=\Delta'(x_0\dots x_n)$.\\
\end{rema}

\begin{defn}

\

Une $preL_\infty$ alg\`ebre est une cog\`ebre permutative codifférentielle $\Big(V[1]\otimes S(V[1]),\Delta, Q\Big)$ telle que $Q$ est une cod\'erivation de $\Delta$ de degr\'e $1$ et $Q^2=0$.\\
\end{defn}

\begin{prop} {\rm \cite{[ChL]}}

\

Si \ $(V, \diamond,d)$ est une algèbre pré-Lie
diff\'erentielle gradu\'ee. On pose \vskip0.15cm
$$
Q_1(x)=dx,~~ Q_2(x_1\otimes x_2)=(-1)^{x_1}x_1\diamond x_2,~~Q_k=0, \forall k\geq3
$$
et
$$\aligned
Q(x_0\otimes x_1\dots x_n)&=Q_1(x_0)\otimes x_1\dots x_n+(-1)^{x_0}\sum_{k=1}^{n}(-1)^{ \sum_{i<k}x_i}x_0\otimes x_1\dots Q_1(x_k)\dots x_n+\\
&\hskip 1cm+\sum_{\sigma\in
Sh_{1,n-1}}\varepsilon_x(\sigma)Q_2(x_0\otimes x_{\sigma(1)})\otimes x_{\sigma(2)}\dots x_{\sigma(n)}+\\
&\hskip 1cm+(-1)^{x_0}\hskip-0.5cm\sum_{\sigma\in Sh_{1,1,n-2}}\varepsilon_x(\sigma) x_0\otimes Q_2(x_{\sigma(1)}\otimes x_{\sigma(2)}).x_{\sigma(3)}\dots x_{\sigma(n)}.
\endaligned
$$

Alors $\Big(V[1]\otimes S(V[1]),\Delta,Q\Big)$ est une $preL_\infty$ alg\`ebre dite la $preL_\infty$ alg\`ebre enveloppante de $(V,\diamond,d)$.\\
\end{prop}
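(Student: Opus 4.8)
Le plan est de v\'erifier les trois propri\'et\'es d\'efinissant une structure de $preL_\infty$ alg\`ebre sur $\big(V[1]\otimes S(V[1]),\Delta,Q\big)$~: $Q$ est de degr\'e $1$, $Q$ est une cod\'erivation du coproduit permutatif $\Delta$, et $Q^2=0$. Le degr\'e est imm\'ediat~: avec $\deg(x)=|x|-1$, on a $\deg(dx)=\deg(x)+1$ puisque $|d|=1$, et $\deg(x_1\diamond x_2)=|x_1|+|x_2|-1=\deg(x_1)+\deg(x_2)+1$ puisque $|\diamond|=0$~; ainsi $Q_1$, $Q_2$, et donc $Q$, augmentent $\deg$ de $1$.

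Pour la propri\'et\'e de cod\'erivation, j'utiliserais la propri\'et\'e universelle de la cog\`ebre permutative colibre $\big(V[1]\otimes S(V[1]),\Delta\big)$ construite plus haut~: les cod\'erivations d'une cog\`ebre permutative colibre sont en bijection avec leur corestriction aux cog\'en\'erateurs $V[1]=V[1]\otimes S^0(V[1])$, la bijection envoyant $Q$ sur $\pi\circ Q$, o\`u $\pi$ d\'esigne la projection sur le facteur $S^0$. Il suffit alors de constater que le $Q$ explicite de l'\'enonc\'e a pour corestriction $Q_1+Q_2$ (avec $Q_k=0$ pour $k\ge3$) et co\"incide avec l'unique cod\'erivation reconstruite \`a partir de cette corestriction~; concr\`etement cela revient \`a contr\^oler l'\'egalit\'e $\Delta\circ Q=(Q\otimes id+id\otimes Q)\circ\Delta$, en s'appuyant sur la forme explicite de $\Delta$ et sur la d\'ecomposition de la remarque qui relie $\Delta$ au coproduit cocommutatif $\Delta'$ de $S^+(V[1])$ pour organiser les sommes de battements.

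Pour \'etablir $Q^2=0$, je remarquerais que $Q^2=\frac12[Q,Q]$ est encore une cod\'erivation de $\Delta$~; par la m\^eme propri\'et\'e universelle, il suffit que sa corestriction $\pi\circ Q^2$ s'annule. En d\'ecomposant selon le nombre de facteurs sym\'etriques, $Q_1$ pr\'eserve ce nombre tandis que $Q_2$ l'abaisse de $1$~; ainsi, sur $V[1]\otimes S^n(V[1])$, $Q$ prend ses valeurs dans $V[1]\otimes\big(S^n\oplus S^{n-1}\big)(V[1])$, et deux applications successives de $Q$ abaissent ce nombre d'au plus $2$. Le terme dans $V[1]\otimes S^0$, le seul vu par $\pi$, ne peut donc appara\^itre que pour $n\le2$, et $\pi\circ Q^2=0$ trivialement pour $n\ge3$. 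Il reste \`a traiter $n=0,1,2$.

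Ces trois morceaux reproduisent exactement les axiomes d'une alg\`ebre pr\'e-Lie diff\'erentielle gradu\'ee. Pour $n=0$, on trouve $\pi Q^2(x_0)=d^2x_0=0$~; pour $n=1$, les contributions de $Q_1$ et de $Q_2$ se combinent en la compatibilit\'e $d(x_0\diamond x_1)=dx_0\diamond x_1+(-1)^{|x_0|}x_0\diamond dx_1$~; et pour $n=2$, les deux applications embo\^it\'ees de $Q_2$ produisent, apr\`es sommation sur les battements intervenant dans $Q$, l'associateur pr\'e-Lie sym\'etris\'e
$$(x_0\diamond x_1)\diamond x_2-x_0\diamond(x_1\diamond x_2)-(-1)^{|x_1||x_2|}\big((x_0\diamond x_2)\diamond x_1-x_0\diamond(x_2\diamond x_1)\big),$$
qui s'annule par la relation pr\'e-Lie. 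Le point d\'elicat est cette derni\`ere \'etape~: suivre les signes de Koszul issus du d\'ecalage $V\mapsto V[1]$ et les facteurs de signature $\varepsilon_x(\sigma)$ des battements, puis v\'erifier que la sym\'etrie du produit $x_1x_2$ dans $S(V[1])$ convertit les quatre termes $Q_2\circ Q_2$ en exactement la combinaison antisym\'etrique en $(x_1,x_2)$ ci-dessus~; une fois les signes ajust\'es, l'annulation n'est autre que l'axiome pr\'e-Lie.
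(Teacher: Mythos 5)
Le papier ne donne aucune d\'emonstration de cette proposition : elle est cit\'ee telle quelle d'apr\`es Chapoton--Livernet. Votre argument est le sch\'ema standard et il est correct ; il suit exactement la philosophie que le papier expose en section 2 (une cod\'erivation de la cog\`ebre permutative colibre est d\'etermin\'ee par sa corestriction aux cog\'en\'erateurs, et l'\'equation de structure $Q^2=0$, lue sur cette corestriction via la filtration par le nombre de facteurs sym\'etriques, se r\'eduit aux cas $n=0,1,2$, c'est-\`a-dire \`a $d^2=0$, \`a la r\`egle de Leibniz pour $d$ et \`a l'identit\'e pr\'e-Lie). Les signes du cas $n=2$ se v\'erifient bien : les quatre termes $Q_2\circ Q_2$, apr\`es multiplication par $(-1)^{x_1+1}$ et en utilisant $(-1)^{x_1x_2+x_1+x_2}=-(-1)^{|x_1||x_2|}$, redonnent exactement l'associateur pr\'e-Lie de l'\'enonc\'e.
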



\subsection{Algèbre  pr\'e-commutative (de Zinbiel) à homotopie près ($Z_\infty$ alg\`ebre)}

\

Une loi d'alg\`ebre pr\'e-commutative, ou d'alg\`ebre de Zinbiel, est une loi binaire dont le sym\'etris\'e est une loi commutative et associative. Plus pr\'ecis\'ement :\\

\begin{defn} {\rm \cite{[L1],[Liv]}}

\

On dit que $(V,\wedge,d)$ est une algèbre de Zinbiel (ou
pr\'e-commutative) \`a droite, différentielle et graduée si $V$
est un espace gradu\'e muni d'un produit $\wedge$ de degré $0$ et
d'une différentielle $d$ de degré $1$ vérifiant :
\begin{itemize}
\item $\forall x,y,z\in V$,~~$(x\wedge y)\wedge z=x\wedge (y\wedge z)+(-1)^{|y||z|}x\wedge (z\wedge y)$,
\item $\forall x,y\in V$,~~$d(x\wedge y)=dx\wedge  y+(-1)^{|x|}x\wedge dy$.\\
\end{itemize}
\end{defn}

Cette structure est associ\'ee \`a une op\'erade quadratique, l'op\'erade $Zinb$. L'op\'erade duale, d\'etermin\'ee par \cite{[L1],[Liv]}, est l'op\'erade Leibniz : $Zinb^!=Leib$.

Rappelons qu'une alg\`ebre de Leibniz (\`a droite) $(V,[~,~])$ est un espace gradu\'e $V$ muni d'un crochet $[~,~]$ de degré $0$ vérifiant :
$$
\forall x,y,z\in V,~~[[x, y],z]=[x,[y, z]]+(-1)^{|y||z|}[[x, z],y].
$$

\begin{defn} {\rm \cite{[Liv]}}

\

Une $Zinb^!$-cog\`ebre ou cogèbre de Leibniz est un espace vectoriel gradué $\mathcal{C}$ muni d'une comultiplication $\delta:\mathcal{C}\longrightarrow\mathcal{C}\otimes\mathcal{C}$ de degré $0$ vérifiant :
$$
(id\otimes \delta)\circ \delta=(\delta\otimes id-\tau_{23}\circ(\delta\otimes id))\circ \delta.
$$
\end{defn}

La cog\`ebre de Leibniz colibre engendr\'ee par $V[1]$ est donn\'ee par :\\

\begin{prop} \cite{[Liv]}

\

Soit $V$ un espace vectoriel gradué. Alors la cogèbre de Leibniz colibre graduée engendr\'ee par $V[1]$ est $(T^+(V[1]),\delta)$ où $\delta$ est d\'efini par :
$$
\delta(x_1\otimes \dots \otimes x_n)=\sum_{1\leq k\leq n-1}(x_1\otimes \dots \otimes x_{k})\bigotimes\mu_{n-k}(x_{k+1}\otimes \dots \otimes x_{n}),
$$
les $\mu_j$ sont d\'efinis par r\'ecurrence ainsi : $\mu_1=id$, et, si $\tau_n$ est le cycle $(1,\dots,n)$ de $S_n$,
$$
\mu_{n+1}=\mu_n\otimes id-(\mu_n\otimes id)\circ\tau_{n+1}^{-1}.
$$
(Comme pour la volte, l'action des $\mu_j$ sur les produits tensoriels est sign\'ee).\\
\end{prop}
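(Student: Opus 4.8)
The plan is to establish two things: first, that $\delta$ satisfies the coLeibniz identity, so that $(T^+(V[1]),\delta)$ is genuinely a Leibniz coalgebra; second, that it enjoys the couniversal property characterizing the cofree (conilpotent) Leibniz coalgebra cogenerated by $V[1]$. Write $W=V[1]$ throughout. The structural remark driving both parts is that $\delta$ leaves the left block $x_1\otimes\dots\otimes x_k$ \emph{ordered} while twisting the right block by $\mu_{n-k}$; this asymmetry mirrors the asymmetry of the Leibniz relation in its last two arguments. A second remark is that, since $\mu_1=\mathrm{id}$, the term $k=n-1$ is the only one surviving $(\mathrm{id}\otimes p_1)\circ\delta$, where $p_1:T^+(W)\to W$ is the projection onto the first tensor component; hence
$$(\mathrm{id}\otimes p_1)\circ\delta\,(x_1\otimes\dots\otimes x_n)=(x_1\otimes\dots\otimes x_{n-1})\otimes x_n,$$
i.e. $(\mathrm{id}\otimes p_1)\circ\delta$ is simply deconcatenation of the last letter.

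For the coLeibniz identity I would expand the three maps $(\mathrm{id}\otimes\delta)\circ\delta$, $(\delta\otimes\mathrm{id})\circ\delta$ and $\tau_{23}\circ(\delta\otimes\mathrm{id})\circ\delta$ on a generic word $x_1\otimes\dots\otimes x_n$. In every resulting term the first tensor factor is an ordered block $x_1\otimes\dots\otimes x_i$, so that, after fixing this leftmost cut, the required equality becomes an identity purely among the maps $\mu_j$ applied to the remaining letters. This identity is then proved by induction on the length $m$ of the word, the inductive step being nothing but the defining recursion $\mu_{m+1}=\mu_m\otimes\mathrm{id}-(\mu_m\otimes\mathrm{id})\circ\tau_{m+1}^{-1}$: its first term matches the contribution of $(\delta\otimes\mathrm{id})\circ\delta$ and its second term matches $-\tau_{23}\circ(\delta\otimes\mathrm{id})\circ\delta$. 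The only delicate point is to check that the Koszul signs carried by the signed action of the permutations agree on both sides; with the sign convention fixed for $\tau$ and the $\mu_j$ this is routine.

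For the cofreeness I would prove the couniversal property in the category of conilpotent Leibniz coalgebras (which is what ``colibre'' means operadically, and for which $T^+(W)$, the direct sum rather than the product, is the correct object). Let $(\mathcal{C},\delta_{\mathcal{C}})$ be a conilpotent Leibniz coalgebra and $f:\mathcal{C}\to W$ a degree $0$ linear map; I must produce a unique morphism of Leibniz coalgebras $F:\mathcal{C}\to T^+(W)$ with $p_1\circ F=f$. Writing $F=\sum_{n\geq1}F_n$ with $F_n:\mathcal{C}\to W^{\otimes n}$, the condition $p_1\circ F=f$ forces $F_1=f$; composing the morphism condition $\delta\circ F=(F\otimes F)\circ\delta_{\mathcal{C}}$ with $\mathrm{id}\otimes p_1$ and using the last-letter deconcatenation remark yields the forced recursion
$$F_n=(F_{n-1}\otimes f)\circ\delta_{\mathcal{C}},$$
which gives uniqueness at once and, taken as a definition together with $F_1=f$, gives a well-defined $F$ thanks to conilpotency.

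The remaining and genuinely hard step is to check that the $F$ so defined really is a Leibniz coalgebra morphism, i.e. that $\delta\circ F=(F\otimes F)\circ\delta_{\mathcal{C}}$ holds in full, not merely its $(\mathrm{id}\otimes p_1)$-shadow which was used to set up the recursion. I expect this to be the main obstacle: it is here that the coLeibniz identity for $\delta_{\mathcal{C}}$ must be invoked, and that the asymmetric $\mu$-twisted form of $\delta$ on $T^+(W)$ must be matched, term by term and sign by sign, against the two terms of the coLeibniz relation. I would carry this out by induction on the tensor degree $n$, reducing the degree $n$ instance to lower ones exactly as the recursion for $\mu_n$ and the recursion for $F_n$ dovetail. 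Once both parts are in place, $(T^+(V[1]),\delta)$ together with $p_1$ satisfies the defining property of the cofree Leibniz coalgebra on $V[1]$, which is the assertion of the proposition.
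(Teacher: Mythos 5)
The paper itself gives no proof of this proposition: it is quoted from Livernet (\cite{[Liv]}), so there is no internal argument to compare yours against. Judged on its own terms, your text is a correct and well-organized \emph{plan} — coLeibniz identity first, then the couniversal property among conilpotent Leibniz coalgebras via the recursion $F_1=f$, $F_n=(F_{n-1}\otimes f)\circ\delta_{\mathcal C}$ forced by composing with $\mathrm{id}\otimes p_1$ — and your observation that $(\mathrm{id}\otimes p_1)\circ\delta$ is deconcatenation of the last letter (because $\mu_1=\mathrm{id}$ kills all terms with $k<n-1$ under $p_1$) is exactly the right lever for uniqueness. This is essentially the standard route.

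But as a proof it has two genuine gaps, both of which you flag yourself without closing. First, the coLeibniz identity: your reduction ``fix the leftmost ordered block and compare'' is sound, but the resulting identity among the $\mu_j$ is \emph{not} merely the defining recursion $\mu_{m+1}=\mu_m\otimes\mathrm{id}-(\mu_m\otimes\mathrm{id})\circ\tau_{m+1}^{-1}$. In $(\mathrm{id}\otimes\delta)\circ\delta$ the inner $\delta$ acts on $\mu_{n-k}(x_{k+1}\otimes\dots\otimes x_n)$, which is a signed sum of \emph{permuted} words, and each of those is then cut and re-twisted by further $\mu$'s; matching this against the two terms of $(\delta\otimes\mathrm{id})\circ\delta$ requires a compatibility of the whole family $(\mu_j)$ with deconcatenation (of the same nature as the lemma $\mu_{p+q}\circ sh_{p,q}=0$ stated just after the proposition), not just the one-step recursion, and the Koszul signs are precisely where such arguments break if left ``routine.'' Second, and more seriously, the verification that the lift $F$ defined by your recursion satisfies $\delta\circ F=(F\otimes F)\circ\delta_{\mathcal C}$ in full — which is where the coLeibniz identity of $\delta_{\mathcal C}$ and the $\mu$-twisted form of $\delta$ must actually be confronted — is explicitly deferred (``I expect this to be the main obstacle''). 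Until those two computations are carried out, you have a correct strategy and a uniqueness argument, but not an existence proof of either the coalgebra structure or the couniversal lift.
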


On peut montrer par r\'ecurrence :

\begin{lem}

\

Pour tout $p$, $q$ positif,
$$
\mu_{p+q}\circ sh_{p,q}=0.
$$
\end{lem}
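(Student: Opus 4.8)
The plan is to argue by induction on $n=p+q$. The base case is $n=2$, i.e. $p=q=1$: here $sh_{1,1}(x_1\otimes x_2)=x_1\otimes x_2+(-1)^{x_1x_2}x_2\otimes x_1$ while $\mu_2=id-\tau_2^{-1}$ is the signed antisymmetriser $\mu_2(y_1\otimes y_2)=y_1\otimes y_2-(-1)^{y_1y_2}y_2\otimes y_1$, so a direct two-line computation gives $\mu_2\circ sh_{1,1}=0$.

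For the inductive step, set $a=x_1\otimes\cdots\otimes x_p$ and $b=x_{p+1}\otimes\cdots\otimes x_{p+q}$, and use the defining recursion $\mu_n=(\mu_{n-1}\otimes id)\circ(id-\tau_n^{-1})$, which splits $\mu_n\circ sh_{p,q}$ as $A-B$ with $A=(\mu_{n-1}\otimes id)\circ sh_{p,q}$ and $B=(\mu_{n-1}\otimes id)\circ\tau_n^{-1}\circ sh_{p,q}$. For $A$ I would expand $sh_{p,q}(a\otimes b)$ according to its last letter, which can only be $x_p$ or $x_{p+q}$; this writes it, up to Koszul signs, as $sh_{p-1,q}(a^-\otimes b)\otimes x_p+sh_{p,q-1}(a\otimes b^-)\otimes x_{p+q}$, where $a^-,b^-$ denote $a,b$ with their last letters removed. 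Since $(\mu_{n-1}\otimes id)$ then applies $\mu_{n-1}$ to genuine $(p-1,q)$- and $(p,q-1)$-shuffles, the induction hypothesis kills both contributions as soon as $p\geq2$ and $q\geq2$. For $B$ I would instead expand $sh_{p,q}(a\otimes b)$ according to its first letter ($x_1$ or $x_{p+1}$); the operator $\tau_n^{-1}$, being the signed cyclic rotation that sends the first tensor slot to the last, turns each such term into a $(p-1,q)$- or $(p,q-1)$-shuffle tensored on the right with the rotated letter, and again $\mu_{n-1}$ annihilates everything by induction once $p\geq2$ and $q\geq2$. Thus $\mu_n\circ sh_{p,q}=0$ whenever $p,q\geq2$.

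It remains to treat the boundary cases $p=1$ and $q=1$, the case $p=q=1$ being the base case. Consider $p=1$, $q\geq2$, so $a=x_1$. In $A$, the only last-letter term that survives $\mu_{n-1}$ is the one ending in $x_1$: there $sh_{0,q}(\emptyset\otimes b)=b$, so $A=\varepsilon\,\mu_q(b)\otimes x_1$ with $\varepsilon$ the Koszul sign of moving $x_1$ past $b$, while the term ending in $x_{1+q}$ involves $\mu_q\circ sh_{1,q-1}=0$ by induction. In $B$, the only first-letter term surviving after rotation is the one beginning with $x_1$, namely $x_1\otimes b$, which $\tau_n^{-1}$ sends to the same $\varepsilon\,b\otimes x_1$; hence $B=\varepsilon\,\mu_q(b)\otimes x_1$ as well, and $A-B=0$. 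The case $q=1$ is entirely symmetric. The one delicate point, and the only place where the argument is more than bookkeeping, is verifying that the surviving contributions to $A$ and to $B$ carry exactly the same Koszul sign; this holds because both originate from the single extremal shuffle in which the lone letter of the one-element factor is pushed to the end past the other factor, so in each case the sign is precisely that of the transposition of the two blocks.
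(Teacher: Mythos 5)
Your proof is correct, and it is exactly the argument the paper has in mind: the paper only asserts the lemma ``par r\'ecurrence'' without writing it out, and your induction on $p+q$ --- splitting $\mu_n=(\mu_{n-1}\otimes id)\circ(id-\tau_n^{-1})$, then decomposing the shuffle by its last letter for the first term and by its first letter (which $\tau_n^{-1}$ rotates to the end) for the second --- is the intended realization of that induction. The only delicate point is the matching of the Koszul signs of the two surviving terms in the boundary cases $p=1$ or $q=1$, and your identification of both signs with that of transposing the singleton past the other block is accurate.
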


\begin{defn}

\

Une structure de $Z_\infty$ alg\`ebre est la donn\'ee d'une cog\`ebre de Leibniz codifférentielle $\Big(T^+(V[1]),\delta, Q\Big)$ telle que $Q$
est une cod\'erivation de $\delta$ de degr\'e $1$ et de carr\'e nul.\\
\end{defn}

\begin{prop} \cite{[Liv]}

\

Soit $(V,\wedge,d)$ une algèbre de Zinbiel diff\'erentielle gradu\'ee. On pose
$$
Q_1(x)=dx,~Q_2(x\otimes y)=(-1)^{x}x\wedge y,~ Q_k=0,~\forall k\geq3,
$$
et
$$\aligned
Q(x_1\otimes \dots \otimes x_n)&=\sum_{k=1}^{n}(-1)^{ \sum_{i<k}x_i}x_1\otimes...\otimes
x_{k-1}\otimes Q_1(x_k)\otimes x_{k+1}\otimes\dots\otimes x_n +\\
&+Q_2(x_1\otimes x_2)\otimes x_3\otimes\dots \otimes x_n+\\
&+\sum_{k=2}^{n-1}(-1)^{\sum_{i<k}x_i}x_1\otimes\dots\otimes Q_2\circ
\mu_2(x_k\otimes x_{k+1})\otimes\dots\otimes x_n.
\endaligned
$$
Alors $\Big(T^+(V[1]),\delta, Q\Big)$ est une $Z_\infty$
alg\`ebre appel\'ee la $Z_\infty$ alg\`ebre enveloppante de $(V,\wedge,d)$.\\
\end{prop}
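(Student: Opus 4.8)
The plan is to check the three requirements in the definition of a $Z_\infty$ algebra for $(T^+(V[1]),\delta,Q)$: that $Q$ has degree $1$, that $Q$ is a coderivation of $\delta$, and that $Q^2=0$. The degree is immediate: $Q_1=d$ has degree $1$, and $Q_2$ is the decalé of the degree-$0$ product $\wedge$, hence of degree $1$; since every summand in the formula for $Q$ applies exactly one of $Q_1,Q_2$ and leaves the remaining factors untouched, $Q$ is homogeneous of degree $1$.

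For the coderivation property I would use that $(T^+(V[1]),\delta)$ is the cofree Leibniz coalgebra on $V[1]$ recalled above \cite{[Liv]}. By this universal property a coderivation of $\delta$ is determined uniquely by its cogenerator component $p\circ Q\colon T^+(V[1])\to V[1]$, where $p$ denotes the projection onto the length-one part; the prescribed data $Q_1,Q_2,Q_k=0\ (k\ge 3)$ fix exactly this component. It then remains to verify that the explicit formula for $Q$ is the unique coderivation extending these data, which I would do by expanding $\delta\circ Q$ and $(Q\otimes id+id\otimes Q)\circ\delta$ and matching them term by term. The summands in which $Q_1$ acts reproduce a Leibniz-type distribution over $\delta$ directly; the summands in which $Q_2$ acts must be handled through the recursion $\mu_{n+1}=\mu_n\otimes id-(\mu_n\otimes id)\circ\tau_{n+1}^{-1}$ that defines $\delta$. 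The crucial technical input is the identity $\mu_{p+q}\circ sh_{p,q}=0$ established above: it guarantees that the contributions of $Q_2$ placed in interior positions (through $Q_2\circ\mu_2$) assemble correctly against the coproduct and that the spurious shuffle terms cancel.

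For $Q^2=0$ I would exploit that $Q$ is an odd operator, so $Q^2=\frac12[Q,Q]$; since the graded commutator of two coderivations of any coproduct is again a coderivation, $Q^2$ is a coderivation of $\delta$. By the cofree property once more, such a coderivation vanishes if and only if its cogenerator component $p\circ Q^2$ vanishes, so it suffices to compute $p\circ Q^2$ arity by arity. In arity $1$ it equals $d^2=0$. In arity $2$ it combines $Q_1Q_2$ with $Q_2(Q_1\otimes id+id\otimes Q_1)$, and its vanishing is precisely the differential compatibility $d(x\wedge y)=dx\wedge y+(-1)^{|x|}x\wedge dy$. In arity $3$ it combines $Q_2(Q_2\otimes id)$ with $Q_2(id\otimes Q_2)\circ\mu_2$ in the interior slot, and its vanishing is exactly the Zinbiel relation $(x\wedge y)\wedge z=x\wedge(y\wedge z)+(-1)^{|y||z|}x\wedge(z\wedge y)$. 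In arity $\ge 4$ there is nothing to check: $Q$ has only a linear and a quadratic cogenerator part, so $Q(w)$ lies in $T^n\oplus T^{n-1}$ for $w\in T^n$, and for $n\ge 4$ neither summand can meet the length-one or length-two part, whence $p\circ Q^2=0$ automatically.

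I expect the coderivation verification to be the main obstacle. The coproduct $\delta$ is built from the recursively defined maps $\mu_j$, and $Q_2$ enters $Q$ asymmetrically, directly in the first position but through $\mu_2$ in the interior positions; matching $\delta\circ Q$ with $(Q\otimes id+id\otimes Q)\circ\delta$ therefore demands careful use of the $\mu_j$ recursion together with the identity $\mu_{p+q}\circ sh_{p,q}=0$, all under the signed action on tensor products and the Koszul signs produced by the shift to $V[1]$. Once this is secured, the reduction of $Q^2=0$ to the three low-arity identities is routine, and those identities are nothing but the defining axioms of a differential graded Zinbiel algebra.
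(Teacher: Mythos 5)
The paper does not prove this proposition: it is stated with the citation \cite{[Liv]} and no argument, so there is no internal proof to compare against. Your plan is nevertheless the standard and correct one, and it matches the strategy the paper implicitly relies on for the analogous $L_\infty$ and $C_\infty$ enveloping algebras: use cofreeness of the Leibniz coalgebra $(T^+(V[1]),\delta)$ so that a coderivation is determined by its corestriction to $V[1]$, observe that $Q^2=\frac12[Q,Q]$ is again a coderivation, and reduce $Q^2=0$ to the vanishing of $p\circ Q^2$ in arities $1$, $2$, $3$, which are exactly $d^2=0$, the Leibniz rule for $d$ with respect to $\wedge$, and the Zinbiel identity; your observation that arity $\geq 4$ contributes nothing because $Q$ only lowers tensor length by at most one is also right. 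The one part you leave as a sketch --- verifying that the displayed formula for $Q$ really is the coderivation extending $Q_1+Q_2$, which requires expanding $\delta\circ Q$ against $(Q\otimes id+id\otimes Q)\circ\delta$ through the recursion for the $\mu_j$ and the lemma $\mu_{p+q}\circ sh_{p,q}=0$ --- is correctly identified as the technical core and the right tools are named, so there is no gap of principle, only a computation left to carry out.
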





\section{Algèbre pré-Gerstenhaber}

\

Une alg\`ebre pr\'e-Gerstenhaber est une bialg\`ebre gradu\'ee
$\mathcal G$, pour une loi $\wedge$ pr\'e-commutative, de degr\'e
0 sur $\mathcal G$, et une loi $\diamond$ pr\'e-Lie sur $\mathcal
G[1]$ (donc de degr\'e -1 sur $\mathcal G$), avec des relations de
compatibilit\'es.

Lorsqu'on sym\'etrise $\wedge$ et antisym\'etrise $\diamond$, on obtient une structure d'alg\`ebre de Gerstenhaber sur $\mathcal G$.\\

Plus pr\'ecis\'ement, nous proposons ici la d\'efinition suivante :

\begin{defn}

\

On dit que $(\mathcal{G},\wedge,\diamond)$ est une alg\`ebre pré-Gerstenhaber \`a droite graduée si :
\begin{itemize}
\item $(\mathcal{G},\wedge)$ est une algèbre de Zinbiel \`a droite graduée, $|\wedge|=0$.

\item $(\mathcal{G}[1],\diamond)$ est une algèbre pré-Lie \`a droite gradu\'ee, $|\diamond|=-1$.
\item  On impose les relations de compatibilité suivantes entre $\wedge$ et $\diamond$ :
$$\aligned
&\alpha\wedge(\beta\diamond \gamma)=(-1)^{(|\beta|-1)(|\gamma|-1)}\alpha\wedge(\gamma\diamond \beta),\\
&\alpha\diamond(\beta\wedge \gamma)=(\alpha\diamond \beta)\wedge\gamma,\\
&(\alpha\diamond\beta)\wedge \gamma=(-1)^{(|\beta|-1)|\gamma|}(\alpha\wedge \gamma)\diamond\beta.
\endaligned
$$
\end{itemize}
\end{defn}

Si on pose $[\alpha,\beta]=\alpha\diamond\beta-(-1)^{(|\alpha|-1)(|\beta|-1)}\beta\diamond\alpha$ et $\alpha\pt\beta=\alpha\wedge\beta+(-1)^{|\alpha||\beta|}\beta\wedge\alpha$, ces relations impliquent que $(\mathcal G,\pt,[~,~])$ est une alg\`ebre de Gerstenhaber, c'est \`a dire que la relation de Leibniz est vraie :
$$
[\alpha,\beta\pt\gamma]=[\alpha,\beta]\pt\gamma+(-1)^{|\beta|(|\alpha|-1)}\beta\pt[\alpha,\gamma].
$$

Mais on obtient aussi les deux relations suivantes :
$$\aligned
\alpha\wedge[\beta,\gamma]&=0,\\
[\alpha,\beta\wedge\gamma]&=[\alpha,\beta]\wedge\gamma.
\endaligned
$$

Dans \cite{[Ag]}, Aguiar a propos\'e une d\'efinition d'alg\`ebre pr\'e-Gerstenhaber avec d'autres conditions de compatibilit\'es. Pour une alg\`ebre \`a droite, sa d\'efinition est \'equivalente aux conditions de compatibilit\'es suivantes :
$$\aligned
{[\alpha,\beta]}\wedge \gamma&=\alpha\diamond(\beta\wedge \gamma)-(-1)^{(|\alpha|-1)|\beta|}\beta\wedge(\alpha\diamond \gamma),\\
(\alpha\pt\beta)\diamond \gamma &=(-1)^{|\alpha|(|\gamma|-1)}\alpha\wedge(\beta\diamond \gamma)
+(-1)^{(|\alpha|+|\gamma|-1)|\beta|}\beta\wedge(\alpha\diamond\gamma).
\endaligned
$$
Il a aussi construit une telle structure sur $T^+(\mathfrak g)$, si $\mathfrak g$ est une alg\`ebre pr\'e-Lie.

Les deux relations ci-dessus sont une cons\'equence de nos relations de compatibilit\'es, notre notion d'alg\`ebre pr\'e-Gerstenahber est plus stricte que celle d'Aguiar.\\

La premi\`ere \'etape de la construction d'une $preG_\infty$ alg\`ebre consiste, comme ci-dessus \`a construire
la $Z_\infty$ alg\`ebre enveloppante $(\mathcal H,\delta,D)$ de l'alg\`ebre pr\'e-commutative $(\mathcal G,\wedge)$. Cette construction ne d\'epend pas des conditions de compatibilit\'es.\\

La deuxi\`eme \'etape, que nous allons pr\'esenter dans la section suivante, consiste \`a prolonger la loi $\diamond$ en une loi $R_2$ sur $\mathcal H$, de telle fa\c con que $(\mathcal H,R_2,D)$ soit une alg\`ebre pr\'e-Lie diff\'erentielle. Mais les conditions de compatibilit\'es propos\'ees par Aguiar sont insuffisantes pour que $D$ soit une d\'erivation de $R_2$, alors que les conditions propos\'ees dans la d\'efinition ci-dessus garantissent cette propri\'et\'e.\\

Avant de pr\'esenter cette construction, donnons un exemple
d'alg\`ebre pr\'e-Gerstenhaber.

\begin{ex}

Soit $\mathcal{G}$ l'espace des  formes différentielles sur une variété $M$. Si $\alpha$ est une $k$-forme, on d\'efinit le degré de $\alpha$ par : $|\alpha|=k+1$.

Soient $\alpha$ et $\beta$ deux formes différentielles, on définit:
$$
\alpha\curlywedge\beta=\displaystyle\frac{1}{|\beta|}\alpha\wedge d\beta~~\qquad~\hbox{ et }~\qquad~\alpha\diamond\beta=\alpha\wedge\beta.
$$

Alors, on vérifie que $|\curlywedge|=0$ et $|\diamond|=-1$.

Pour $\alpha$, $\beta$ et $\gamma$ dans $\mathcal{G}$, on vérifie aussi que :
$$
(\alpha\curlywedge \beta)\curlywedge \gamma=\alpha\curlywedge(\beta\curlywedge\gamma)+(-1)^{|\beta||\gamma|}\alpha\curlywedge(\gamma\curlywedge\beta),
$$
et
$$
(\alpha\diamond \beta)\diamond \gamma-\alpha\diamond
(\beta\diamond\gamma)=(-1)^{(|\beta|-1)(|\gamma|-1)}\Big((\alpha\diamond\gamma)\diamond \beta -\alpha\diamond(\gamma\diamond\beta)\Big).
$$

Les relations de compatibilités entre $\curlywedge$ et $\diamond$ sont aussi vérifiées :
$$\aligned
\alpha\curlywedge(\beta\diamond \gamma)&=(-1)^{(|\beta|-1)(|\gamma|-1)}\alpha\curlywedge(\gamma\diamond \beta),\\
\alpha\diamond(\beta\curlywedge \gamma)&=(\alpha\diamond \beta)\curlywedge\gamma,\\
(\alpha\diamond\beta)\curlywedge \gamma&=(-1)^{(|\beta|-1)|\gamma|}(\alpha\curlywedge \gamma)\diamond\beta.
\endaligned
$$

Ainsi, $(\mathcal{G},\curlywedge,\diamond)$ est bien une algèbre pré-Gerstenhaber.\\
\end{ex}


\section{L'alg\`ebre pr\'e-Lie diff\'erentielle $(\mathcal H,R_2,D)$}

\

Soit $(\mathcal G,\wedge,\diamond)$ une alg\`ebre pr\'e-Gerstenhaber gradu\'ee. Puisque $\wedge$ est une loi pr\'e-commutative, on lui associe une cog\`ebre de Leibniz codiff\'erentielle $(\mathcal H,\delta,D)$. Dans cette section, on montre que $\mathcal H$ est aussi munie d'une structure d'alg\`ebre pr\'e-Lie diff\'erentielle.\\

Pour cela, on prolonge $\diamond$ \`a $\mathcal{H}$ en $R_2$ de telle fa\c con que $(\mathcal{H},R_2,D)$ soit une algèbre pré-Lie différentielle. Soient $X=\alpha_1\otimes...\otimes\alpha_p$, et $Y=\alpha_{p+1}\otimes...\otimes\alpha_{p+q}$ deux \'el\'ements de $\mathcal H$, on pose :
$$\aligned
&R_2(X,Y)=\\&=(\alpha_{1}\diamond\alpha_{p+1})\otimes\sum_{\sigma\in Sh_{p-1,q-1}}(-1)^{(\alpha_2+\dots+\alpha_p)\alpha_{p+1}}\varepsilon_\alpha(\sigma^{-1})\alpha_{\sigma^{-1}(2)}\otimes \cdots\widehat{ _{p+1}}\dots\otimes\alpha_{\sigma^{-1}(p+q)}\\
&+\hskip-0.5cm\sum_{\begin{smallmatrix}2\leq k\leq p\\ \sigma\in
Sh_{p-k,q-1}\end{smallmatrix}}\hskip-0.5cm(-1)^{(\alpha_{k+1}+\dots+\alpha_p)\alpha_{p+1}}\varepsilon_\alpha(\sigma^{-1})
\alpha_{1}\otimes\alpha_{2}\otimes\dots\otimes\alpha_{k-1}\otimes[\alpha_{k},\alpha_{p+1}]\otimes
\alpha_{\sigma^{-1}(k+1)}\otimes\dots\\&\hskip3cm\dots\otimes\cdots\widehat{_{p+1}}\dots\otimes\alpha_{\sigma^{-1}(p+q)},
\endaligned
$$
avec : $[\alpha_{k},\alpha_{p+1}]=\alpha_{k}\diamond\alpha_{p+1}-(-1)^{\alpha_{k}\alpha_{p+1}}\alpha_{p+1}\diamond\alpha_{k}$.\\

On rappelle que $deg(X)=deg(\alpha_1\otimes\dots\otimes\alpha_p)=\alpha_1+\cdots+\alpha_p=x$.\\

\begin{thm}

\

Le triplet $(\mathcal{H},R_2,D)$ est une alg\`ebre pré-Lie diff\'erentielle gradu\'ee.\\
\end{thm}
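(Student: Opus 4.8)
We must show that the extended operation $R_2$ on $\mathcal H = T^+(\mathcal G[1])/\sum \mathrm{Im}(sh_{p,q})$ satisfies two things: (i) $R_2$ is a graded pre-Lie product on $\mathcal H$, i.e. its associator is symmetric in the last two arguments in the graded sense, and (ii) $D$ is a derivation of $R_2$, namely $D(R_2(X,Y)) = R_2(DX,Y) + (-1)^{x}R_2(X,DY)$ where I write $x=\deg(X)$ in the shifted convention. The plan is to reduce everything to the known algebraic identities available to us: the Zinbiel axiom for $\wedge$, the pre-Lie axiom for $\diamond$, the three compatibility relations in the definition of pre-Gerstenhaber algebra, and the fact that $(\mathcal H,[~,~],D)$ (with the Lie bracket extended as in Proposition~3.5) is a \emph{Lie} differential graded algebra. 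Since the antisymmetrization of $\diamond$ is $[~,~]$, I expect the symmetric part of $R_2$ to reproduce the extended bracket, so the Lie-differential structure on $\mathcal H$ can be borrowed rather than reproved.

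\textbf{First step: well-definedness on the quotient.} Before any of the axioms, I would verify that $R_2$ descends to $\mathcal H$, i.e. that it kills shuffles in either slot. The formula for $R_2(X,Y)$ singles out the first tensor factor $\alpha_1$ of $X$ (it is the unique factor allowed to act via the raw $\diamond$, the others acting through the symmetric bracket $[~,~]$). This asymmetry is exactly what one needs to match the Zinbiel/Leibniz structure, but it means well-definedness is not free: I would use Lemma~4.x ($\mu_{p+q}\circ sh_{p,q}=0$) together with the first compatibility relation $\alpha\wedge(\beta\diamond\gamma)=(-1)^{(|\beta|-1)(|\gamma|-1)}\alpha\wedge(\gamma\diamond\beta)$ to show the first term is shuffle-invariant in the $Y$-slot, and the Zinbiel relation to handle the $X$-slot. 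This is bookkeeping but it is the first place where the \emph{stronger} compatibility relations (as opposed to Aguiar's) are genuinely used, so I would not skip it.

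\textbf{Second step: $D$ is a derivation of $R_2$.} This is where I expect the real work, and it is precisely the property that fails for Aguiar's weaker axioms, so it cannot follow from generalities. The codifferential $D$ on $\mathcal H$ is built from $d$ and $\wedge$ (via the $Z_\infty$ formula of Proposition~4.x). I would expand $D(R_2(X,Y))$, $R_2(DX,Y)$ and $R_2(X,DY)$ as sums over insertions of $d$ (the easy part, handled termwise by the pre-Lie Leibniz rule $d(x\diamond y)=dx\diamond y+(-1)^{|x|}x\diamond dy$ and its bracket analogue) and over internal $\wedge$-multiplications. The $\wedge$-multiplication terms must cancel, and here the decisive inputs are the second and third compatibility relations, $\alpha\diamond(\beta\wedge\gamma)=(\alpha\diamond\beta)\wedge\gamma$ and $(\alpha\diamond\beta)\wedge\gamma=(-1)^{(|\beta|-1)|\gamma|}(\alpha\wedge\gamma)\diamond\beta$, which let me move a $\wedge$ past a $\diamond$ or a bracket at the cost of a controlled sign. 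I would organize the comparison by the position of the multiplied pair relative to the distinguished factor $\alpha_1$ and to the cut point $p$, checking the cases separately. Keeping the signs $\varepsilon_\alpha(\sigma^{-1})$ straight through these reorderings is the principal obstacle.

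\textbf{Third step: the pre-Lie identity.} For the associator symmetry I would again split off the symmetric part. Since $[~,~]$ on $\mathcal H$ already satisfies graded Jacobi (it is the bracket of the Lie differential graded algebra $(\mathcal H,[~,~],D)$ of Proposition~3.5), the antisymmetrization of $R_2$ is forced to be $[~,~]$, and the pre-Lie axiom $(X\diamond Y)\diamond Z - X\diamond(Y\diamond Z) = (-1)^{yz}\big((X\diamond Z)\diamond Y - X\diamond(Z\diamond Y)\big)$ is equivalent to Jacobi for its antisymmetrization \emph{plus} the correct behaviour of the genuinely pre-Lie (non-antisymmetric) part coming from the distinguished first factor. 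Concretely, I would verify that $R_2$ restricted to this first-factor action reproduces, after one application of the pre-Lie axiom for $\diamond$ on $\mathcal G$ and the compatibility relations, the required symmetry; the remaining contributions involving only brackets reduce to Jacobi on $\mathcal H$, already known. I expect the second step (the derivation property) to be the hard technical core; the pre-Lie identity, once the bracket sub-structure is recognized as inherited, should be the most mechanical of the three.
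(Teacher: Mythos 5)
Your second step is the right idea and matches the paper's strategy for the derivation property: one writes symbolically $D=D_2+m_2$ and $R_2=\diamond+[~,~]$, expands $D\circ R_2$ and $R_2\circ(D\otimes id)+R_2\circ(id\otimes D)$ into families of terms indexed by the position of the $\wedge$-contraction relative to the distinguished first factor and to the cut point, and cancels them using exactly the three compatibility relations you cite (the relation $\alpha\wedge(\beta\diamond\gamma)=(-1)^{(|\beta|-1)(|\gamma|-1)}\alpha\wedge(\gamma\diamond\beta)$ kills the family $D_2(\alpha_1,[\alpha_2,\alpha_{p+1}])\otimes\cdots$ outright, and the other two handle the families where $\wedge$ hits the $\diamond$-term or a bracket). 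Two other points, however, are off. First, your ``well-definedness on the quotient'' step rests on a misidentification of $\mathcal H$: in the Zinbiel/pre-Gerstenhaber construction the cofree \emph{Leibniz} coalgebra is the full tensor space $\mathcal H=T^+(\mathcal G[1])$ --- the quotient by $\sum\mathrm{Im}(sh_{p,q})$ occurs only in the commutative ($C_\infty$) case. There is no quotient here, hence nothing to check, and it is just as well, since $R_2$ does not kill shuffles.

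The genuine gap is in your third step. You propose to deduce the pre-Lie identity for $R_2$ by splitting off its antisymmetrization and invoking the Jacobi identity of the extended bracket of Proposition~3.5 on $\mathcal H$. This fails for three reasons. (a) That proposition lives on the shuffle quotient in the Gerstenhaber setting, where the full Leibniz compatibility holds; here the compatibilities are different ($\alpha\wedge[\beta,\gamma]=0$, $[\alpha,\beta\wedge\gamma]=[\alpha,\beta]\wedge\gamma$) and the space is $T^+$. (b) The antisymmetrization of $R_2$ is \emph{not} that extended bracket: in $R_2(X,Y)$ only the first factor $\alpha_{p+1}$ of $Y$ ever enters a $\diamond$ or a bracket, whereas the bracket of Proposition~3.5 pairs every factor of $X$ with every factor of $Y$. (c) Even granted a Lie bracket, the pre-Lie axiom is strictly stronger than Jacobi for the antisymmetrization; there is no clean decomposition into ``Jacobi plus a residual condition'' that one could check separately. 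The paper instead verifies the associator identity directly, sorting the terms of $(\ast)$ into four families: terms with two $\diamond$'s acting on $(\alpha_1,\alpha_{p+1},\alpha_{p+q+1})$, which cancel by the pre-Lie axiom of $\diamond$ on $\mathcal G[1]$; terms with nested brackets $[[\alpha_k,\alpha_{p+1}],\alpha_{p+q+1}]$ etc., which cancel by the right Leibniz identity satisfied by $[~,~]$ on $\mathcal G[1]$ (Jacobi is used only at the level of $\mathcal G[1]$, not of $\mathcal H$); and two families with brackets or a $\diamond$ in separated positions, which cancel pairwise by the associativity/commutativity of shuffles. You would need to carry out this direct case analysis; it cannot be inherited from the Lie structure.
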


\begin{proof}

\

Montrons d'abord que $(\mathcal{H},R_2)$ est une algèbre pré-Lie graduée. Soient
$$\aligned
X&=\alpha_1\otimes\dots\otimes\alpha_p,\\
Y&=\alpha_{p+1}\otimes\dots\otimes\alpha_{p+q},\\
Z&=\alpha_{p+q+1}\otimes\dots\otimes\alpha_{p+q+r}
\endaligned
$$
trois \'el\'ements de $\mathcal H$. Vérifions la relation $(\ast)$
suivante :
$$
R_2\big(R_2(X,Y),Z\big)-R_2\big(X,R_2(Y,Z)\big)-(-1)^{yz}\Big(R_2\big(R_2(X,Z),Y\big)-R_2\big(X,R_2(Z,Y)\big)\Big)=0.
$$
Dans cette relation, il appara\^\i t 4 types de termes :
\begin{itemize}
\item[1.] Dans $(\ast)$, il appara\^it des termes avec deux $\diamond$ :
$$
(\alpha_1\diamond\alpha_{p+1})\diamond\alpha_{p+q+1},\quad\alpha_1\diamond(\alpha_{p+1}\diamond\alpha_{p+q+1}),\quad(\alpha_1\diamond\alpha_{p+q+1})\diamond\alpha_{p+1},\quad \alpha_1\diamond(\alpha_{p+q+1}\diamond\alpha_{p+1}).
$$
Ces termes apparaissent sous la forme $\pm terme\otimes\alpha_{\sigma^{-1}(2)}\otimes\dots\alpha_{\sigma^{-1}(p+q+r)}$ o\`u $\sigma$ est un shuffle de $\{1,\dots,p+q+r\}\setminus\{1,p+1,p+q+1\}$ : $\sigma\in Sh_{p-1,q-1,r-1}$. Plus pr\'ecis\'ement, on pose :
$$
\varepsilon_\sigma=\varepsilon_\alpha(\sigma)(-1)^{(x-\alpha_1)\alpha_{p+1}}(-1)^{(x+y-\alpha_1-\alpha_{p+1})\alpha_{p+q+1}},
$$
et
$$
A_\sigma=\alpha_{\sigma^{-1}(2)}\otimes\dots\widehat{_{p+1}}\dots\widehat{_{p+q+1}}\dots\otimes\alpha_{\sigma^{-1}(p+q+r)}.
$$
La contribution des termes correspondants est $C\otimes(\varepsilon_\sigma A_\sigma)$ avec :
$$\aligned
C&=(\alpha_1\diamond\alpha_{p+1})\diamond\alpha_{p+q+1}-\alpha_1\diamond(\alpha_{p+1}\diamond\alpha_{p+q+1})-\\
&\hskip 1cm-\varepsilon'\big((\alpha_1\diamond\alpha_{p+q+1})\diamond\alpha_{p+1}- \alpha_1\diamond(\alpha_{p+q+1}\diamond\alpha_{p+1})\big),
\endaligned
$$
o\`u $\varepsilon'$ est le signe :
$$
\varepsilon'=(-1)^{yz}(-1)^{(y-\alpha_{p+1})(z-\alpha_{p+q+1})}(-1)^{\alpha_{p+q+1}(y-\alpha_{p+1})+\alpha_{p+1}(z-\alpha_{p+q+1})}=(-1)^{\alpha_{p+1}\alpha_{p+q+1}}.
$$
Ces termes se simplifient gr\^ace \`a la relation pr\'e-Lie de $\diamond$.\\
\item[2.] Dans $(\ast)$, il appara\^it des termes avec un double crochet ou un $\diamond$ dans un crochet :
$$
[[\alpha_k,\alpha_{p+1}],\alpha_{p+q+1}],\quad[\alpha_k,\alpha_{p+1}\diamond\alpha_{p+q+1}],\quad[[\alpha_k,\alpha_{p+q+1}],\alpha_{p+1}],\quad
[\alpha_k,\alpha_{p+q+1}\diamond\alpha_{p+1}].
$$
Ces termes apparaissent pour $1<k\leq p$ sous la forme
$$\aligned
&\pm (\alpha_1\otimes\dots\otimes\alpha_{k-1})\otimes terme\otimes\alpha_{\sigma^{-1}(k+1)}\otimes\dots\widehat{_{p+1}}\dots\widehat{_{p+q+1}}\dots\otimes\alpha_{\sigma^{-1}(p+q+r)}\\
=&\pm A_k\otimes terme\otimes B_\sigma.
\endaligned
$$
o\`u $\sigma$ est dans $Sh_{p-k,q-1,r-1}$ agissant sur $\{k+1,\dots,p+q+r\}\setminus\{p+1,p+q+1\}$. On obtient donc les termes $\varepsilon_\alpha(\sigma) A_k\otimes C\otimes B_\sigma$ avec, le m\^eme $\varepsilon'$ que ci-dessus :
$$\aligned
C&=
[[\alpha_k,\alpha_{p+1}],\alpha_{p+q+1}]-[\alpha_k,\alpha_{p+1}\diamond\alpha_{p+q+1}]-\varepsilon'[[\alpha_k,\alpha_{p+q+1}],\alpha_{p+1}]+\\
&\hskip 9cm+\varepsilon'
[\alpha_k,\alpha_{p+q+1}\diamond\alpha_{p+1}]\\
&=[[\alpha_k,\alpha_{p+1}],\alpha_{p+q+1}]-[\alpha_k,[\alpha_{p+1},\alpha_{p+q+1}]]-(-1)^{\alpha_{p+1}\alpha_{p+q+1}}[[\alpha_k,\alpha_{p+q+1}],\alpha_{p+1}]\\
&=0.
\endaligned
$$

\item[3.] Dans $(\ast)$, il appara\^it des termes de la forme
$$
\dots\otimes[\alpha_k,\alpha_{p+1}]\otimes\dots\otimes[\alpha_\ell,\alpha_{p+q+1}]\otimes\cdots,\quad\dots\otimes[\alpha_\ell,\alpha_{p+q+1}]\otimes\dots\otimes[\alpha_k,\alpha_{p+1}]\otimes\cdots.
$$
Plus pr\'ecis\'ement :

- Dans $R_2\big(R_2(X,Y),Z\big)$, pour tout $k\in\{2,\dots,p\}$, les termes qui apparaissent sont :

$(1.1): \dots\otimes[\alpha_k,\alpha_{p+1}]\otimes\dots\otimes[\alpha_\ell,\alpha_{p+q+1}]\otimes\cdots$ , avec $k<\ell\leq p$,

$(1.2): \dots\otimes[\alpha_k,\alpha_{p+1}]\otimes\dots\otimes[\alpha_\ell,\alpha_{p+q+1}]\otimes\cdots$ , avec $p+1<\ell\leq p+q$,

$(1.3): \dots\otimes[\alpha_\ell,\alpha_{p+q+1}]\otimes\dots\otimes[\alpha_k,\alpha_{p+1}]\otimes\cdots$ , avec $1<\ell< k$.\\

- Dans $R_2\big(X,R_2(Y,Z)\big)$, pour tout $k\in\{2,\dots,p\}$, les termes qui apparaissent sont :

$(2.1): \dots\otimes[\alpha_k,\alpha_{p+1}]\otimes\dots\otimes[\alpha_\ell,\alpha_{p+q+1}]\otimes\cdots$ , avec $p+1<\ell\leq p+q$.\\

- Dans $R_2\big(R_2(X,Z),Y\big)$, pour tout $k\in\{2,\dots,p\}$, les termes qui apparaissent sont :

$(3.1): \dots\otimes[\alpha_k,\alpha_{p+q+1}]\otimes\dots\otimes[\alpha_\ell,\alpha_{p+1}]\otimes\cdots$ , avec $k<\ell\leq p$,

$(3.2): \dots\otimes[\alpha_k,\alpha_{p+q+1}]\otimes\dots\otimes[\alpha_\ell,\alpha_{p+1}]\otimes\cdots$ , avec $p+q+1<\ell\leq p+q+r$,

$(3.3): \dots\otimes[\alpha_\ell,\alpha_{p+1}]\otimes\dots\otimes[\alpha_k,\alpha_{p+q+1}]\otimes\cdots$ , avec $1<\ell< k$.\\

- Dans $R_2\big(X,R_2(Z,Y)\big)$, pour tout $k\in\{2,\dots,p\}$, les termes qui apparaissent sont :

$(4.1): \dots\otimes[\alpha_k,\alpha_{p+q+1}]\otimes\dots\otimes[\alpha_\ell,\alpha_{p+1}]\otimes\cdots$ , avec $p+q+1<\ell\leq p+q+r$.\\

Il est clair que $(1.2)-(2.1)=0$ et $(3.2)-(4.1)=0$. En utilisant la commutativité des shuffles, on vérifie que
$(1.1)=(3.3)$ et $(1.3)=(3.1)$.\\

\item[4.] Enfin, dans $(\ast)$, il appara\^it des termes de la forme
$$
\alpha_1\diamond\alpha_{p+1}\otimes\dots\otimes[\alpha_k,\alpha_{p+q+1}]\otimes\cdots,\quad \alpha_1\diamond\alpha_{p+q+1}\otimes\dots
\otimes[\alpha_k,\alpha_{p+1}]\otimes\dots.
$$
Plus pr\'ecis\'ement,\\

- Dans $R_2\big(R_2(X,Y),Z\big)$, les termes qui apparaissent sont :

$(1.1)': \alpha_1\diamond\alpha_{p+1}\otimes\dots\otimes[\alpha_k,\alpha_{p+q+1}]\otimes\cdots$ , avec $1<k\leq p$,

$(1.2)': \alpha_1\diamond\alpha_{p+1}\otimes\dots\otimes[\alpha_k,\alpha_{p+q+1}]\otimes\cdots$ , avec $p+1<k\leq p+q$,

$(1.3)': \alpha_1\diamond\alpha_{p+q+1}\otimes\dots\otimes[\alpha_k,\alpha_{p+1}]\otimes\cdots$ , avec $1<k\leq p$.\\

\n- Dans $R_2\big(X,R_2(Y,Z)\big)$, les termes qui apparaissent sont:

$(2.1)': \alpha_1\diamond\alpha_{p+1}\otimes\dots\otimes[\alpha_k,\alpha_{p+q+1}]\otimes\cdots$ , avec $p+1<k\leq p+q$.\\

- Dans $R_2\big(R_2(X,Z),Y\big)$, les termes qui apparaissent sont :

$(3.1)': \alpha_1\diamond\alpha_{p+q+1}\otimes\dots\otimes[\alpha_k,\alpha_{p+1}]\otimes\cdots$ , avec $1<k\leq p$,

$(3.2)': \alpha_1\diamond\alpha_{p+q+1}\otimes\dots\otimes[\alpha_k,\alpha_{p+1}]\otimes\cdots$ , avec $p+q+1<k\leq p+q+r$,

$(3.3)': \alpha_1\diamond\alpha_{p+1}\otimes\dots\otimes[\alpha_k,\alpha_{p+q+1}]\otimes\cdots$ , avec $1< k\leq p$.\\

- Dans $R_2\big(X,R_2(Z,Y)\big)$, les termes qui apparaissent sont :

$(4.1)': \alpha_1\diamond\alpha_{p+q+1}\otimes\dots\otimes[\alpha_k,\alpha_{p+1}]\otimes\cdots$ , avec $p+q+1<k\leq p+q+r$.\\

Il est clair que $(1.2)'-(2.1)'=0$ et $(3.2)'-(4.1)'=0$. En utilisant la commutativité des battements, on vérifie que $(1.1)'=(3.3)'$ et $(1.3)'=(3.1)'$.\\
\end{itemize}

Montrons maintenant que la différentielle $D$ est une dérivation de $R_2$. Pour $X=\alpha_1\otimes\dots\otimes\alpha_p$ et $
Y=\alpha_{p+1}\otimes\dots\otimes\alpha_{p+q}$, on vérifie que
$$
D\circ R_2(X,Y)=R_2\big(D(X),Y\big)+(-1)^{x}R_2\big(X,D(Y)\big)
$$

Rappelons que
$$\aligned
D(\alpha_1\otimes...\otimes&\alpha_p)=
D_2(\alpha_1,\alpha_2)\otimes \alpha_3\otimes\dots \otimes\alpha_n
\\& + \displaystyle\sum_{k=2}^{n-1}(-1)^{
\sum_{i<k}\alpha_i}\alpha_1\otimes\dots\otimes \alpha_{k-1}\otimes
D_2\circ \mu_2(\alpha_k,\alpha_{k+1})\otimes
\alpha_{k+2}\otimes\dots\otimes \alpha_n.\endaligned
$$

On pose $m_2=D_2\circ\mu_2$ et on note symboliquement $D=D_2+m_2$.\\

De m\^eme, $R_2$ s'\'ecrit:
$$\aligned
&R_2(X,Y)=\\&=(\alpha_{1}\diamond\alpha_{p+1})\otimes\sum_{\sigma\in Sh_{p-1,q-1}}(-1)^{(\alpha_2+\dots+\alpha_p)\alpha_{p+1}}\varepsilon_\alpha(\sigma^{-1})\alpha_{\sigma^{-1}(2)}\otimes \cdots\widehat{_{p+1}}\dots\otimes\alpha_{\sigma^{-1}(p+q)}\\
&+\hskip-0.5cm\sum_{\begin{smallmatrix}2\leq k\leq p\\ \sigma\in
Sh_{p-k,q-1}\end{smallmatrix}}\hskip-0.5cm(-1)^{(\alpha_{k+1}+\dots+\alpha_p)\alpha_{p+1}}\varepsilon_\alpha(\sigma^{-1})
\alpha_{1}\otimes\alpha_{2}\otimes\dots\otimes\alpha_{k-1}\otimes[\alpha_{k},\alpha_{p+1}]\otimes
\alpha_{\sigma^{-1}(k+1)}\otimes\\
&\hskip3cm\dots\otimes\cdots\widehat{_{p+1}}\dots\otimes\alpha_{\sigma^{-1}(p+q)}.
\endaligned
$$

Ce qu'on note symboliquement $R_2=\diamond+[~,~]$.\\

La relation \`a montrer s'\'ecrit donc :
$$
(1)=D\circ R_2=R_2\circ(D\otimes id)+R_2\circ(id\otimes D)=(2)+(3).
$$
Avec nos notations symboliques, on trouve :
$$\aligned
(1)&=D_2\circ\diamond+D_2\circ[~,~]+m_2\circ[~,~]+\diamond\otimes m_2+D_2\otimes[~,~]+\big(m_2\otimes[~,~]+[~,~]\otimes m_2\big)\\
&=(1.1)+(1.2)+(1.3)+(1.4)+(1.5)+(1.6).
\endaligned
$$
De m\^eme:
$$\aligned
(2)&=\diamond\circ(D_2\otimes id)+[~,~]\circ(m_2\otimes id)+\diamond\otimes m_2+D_2\otimes[~,~]+\big(m_2\otimes[~,~]+[~,~]\otimes m_2\big)\\
&=(2.1)+(2.2)+(2.3)+(2.4)+(2.5),\\
(3)&=\diamond\circ(id\otimes D_2)+[~,~]\circ(id\otimes D_2)+\diamond\otimes m_2+[~,~]\otimes m_2\\
&=(3.1)+(3.2)+(3.3)+(3.4).
\endaligned
$$

La preuve consiste \`a v\'erifier les 6 \'egalit\'es suivantes:
$$\aligned
(1.1)&=(2.1)+(3.1),\\
(1.2)&=0,\\
(1.3)&=(2.2)+(3.2),\\
(1.4)&=(2.3)+(3.3),\\
(1.5)&=(2.4),\\
(1.6)&=(2.5)+(3.4).
\endaligned
$$

Les trois premi\`eres font intervenir les conditions de compatibilit\'es et nous les d\'etaillons ci-dessous.\\

\begin{itemize}
\item[1.] $(1.1)=(2.1)+(3.1)$.

Les termes de (1.1) sont :
$$\aligned
(1.1)&=\sum_{\sigma\in Sh_{p-1,q-1}}\varepsilon_\alpha(\sigma)(-1)^{\alpha_{p+1}(\alpha_2+\dots+\alpha_p)}D_2(\alpha_{1}\diamond\alpha_{p+1},\alpha_{\sigma^{-1}(2)})\otimes\cdots\widehat{_{p+1}}\dots\otimes\alpha_{\sigma^{-1}(p+q)}.\\
\endaligned
$$
Comme $\sigma$ est un shuffle, on a $\sigma^{-1}(2)=2$ ou $\sigma^{-1}(2)=p+2$.

\noindent De m\^eme,
$$
(2.1)=(D_2(\alpha_{1},\alpha_2)\diamond\alpha_{p+1})\otimes\sum_{\sigma\in
Sh_{p-2,q-1}}\hskip-0.4cm
(-1)^{(\alpha_3+\dots+\alpha_p)\alpha_{p+1}}
\varepsilon_\alpha(\sigma)
\alpha_{\sigma^{-1}(3)}\otimes\cdots\widehat{_{p+1}}\dots\otimes\alpha_{\sigma^{-1}(p+q)},
$$
et
$$\aligned
(3.1)&=(\alpha_1\diamond D_2(\alpha_{p+1},\alpha_{p+2}))\otimes\sum_{\sigma\in Sh_{p-2,q-1}} (-1)^{(\alpha_2+\dots+\alpha_p)(\alpha_{p+1}+\alpha_{p+2})} \varepsilon_\alpha(\sigma)\\
&\hskip 5cm\alpha_{\sigma^{-1}(3)}\otimes\cdots\widehat{_{p+1~p+2}}\dots\otimes\alpha_{\sigma^{-1}(p+q)}.
\endaligned
$$
\noindent Avec les relations de compatibilit\'e :
$$
\alpha\diamond(\beta\wedge \gamma)=(\alpha\diamond
\beta)\wedge\gamma
\hskip0.7cm\hbox{et}\hskip0.6cm(\alpha\diamond\beta)\wedge\gamma=(-1)^{(|\beta|-1)|\gamma|}
(\alpha\wedge\gamma)\diamond\beta,
$$
ceci prouve la relation $(1.1)=(2.1)+(3.1)$.\\

\item[2.] $(1.2)=0$.

Les termes de (1.2) sont :
$$
(1.2)=D_2(\alpha_1,[\alpha_2,\alpha_{p+1}])\otimes\hskip-0.5cm
\sum_{\sigma\in Sh_{p-2,q-1}}\hskip-0.5cm
\varepsilon_\alpha(\sigma^{-1})(-1)^{\alpha_{p+1}(\alpha_3+\dots+\alpha_p)}\alpha_{\sigma^{-1}(3)}\otimes\cdots
\widehat{_{p+1}}\dots\otimes\alpha_{\sigma^{-1}(p+q)}.
$$
Avec la relation de compatibilit\'e :
$$
\alpha\wedge(\beta\diamond
\gamma)=(-1)^{(|\beta|-1)(|\gamma|-1)}\alpha\wedge(\gamma\diamond
\beta),
$$
ceci prouve la relation $(1.2)=0$.\\

\item[3.] $(1.3)=(2.2)+(3.2)$.

Les termes de (1.3) sont :
$$\aligned
(1.3)&=\sum_{3\leq k\leq p}(-1)^{\alpha_{p+1}(\alpha_{k+1}+\dots+\alpha_p)}\alpha_1\otimes\dots\otimes \alpha_{k-2}\otimes m_2(\alpha_{k-1},[\alpha_{k},\alpha_{p+1}])\otimes\\
&\hskip 1cm\otimes\sum_{\sigma\in Sh_{p-k,q-1}}\varepsilon_\alpha(\sigma)\alpha_{\sigma^{-1}(k+1)}\otimes\cdots
\widehat{_{p+1}}\dots\otimes\alpha_{\sigma^{-1}(p+q)}+\\
&+(-1)^{\alpha_{p+1}(\alpha_{k}+\dots+\alpha_p)}\alpha_1\otimes\dots\otimes \alpha_{k-2}\otimes\sum_{\sigma\in Sh_{p-k-1,q-1}}\varepsilon_\alpha(\sigma) m_2([\alpha_{k-1},\alpha_{p+1}],\alpha_{\sigma^{-1}(k)})\otimes\\
&\hskip 1cm\otimes\alpha_{\sigma^{-1}(k+1)}\otimes\cdots\widehat{_{p+1}}\dots\otimes\alpha_{\sigma^{-1}(p+q)}.
\endaligned
$$

Comme $\sigma$ est un shuffle, on a $\sigma^{-1}(k)=k$ ou $\sigma^{-1}(k)=p+2$.

De m\^eme,
$$\aligned
(2.2)&=\sum_{3\leq k\leq p} (-1)^{\alpha_{p+1}(\alpha_{k+1}+\dots+\alpha_p)}\alpha_{1}\otimes\dots\otimes\alpha_{k-2}\otimes[m_2(\alpha_{k-1},\alpha_{k}),\alpha_{p+1}]\otimes\\
&\hskip 2cm\otimes \sum_{\sigma\in Sh_{p-k,q-1}}\varepsilon_\alpha(\sigma)\alpha_{\sigma^{-1}(k+1)}\otimes\dots
\widehat{_{p+1}}\dots\otimes\alpha_{\sigma^{-1}(p+q)}.\\
\endaligned
$$
Et
$$\aligned
(3.2)&=\sum_{3\leq k\leq p}(-1)^{(\alpha_{p+1}+\alpha_{p+2})(\alpha_{k+1}+\dots+\alpha_p)}\alpha_{1}\otimes\dots\otimes\alpha_{k-2}\otimes[\alpha_{k-1},D_2(\alpha_{p+1},\alpha_{p+2})]\otimes\\
&\hskip 2cm\otimes \sum_{\sigma\in Sh_{p-k,q-1}}\varepsilon_\alpha(\sigma) \alpha_{\sigma^{-1}(k+1)}\otimes\cdots\widehat{_{p+1~p+2}}\dots\otimes\alpha_{\sigma^{-1}(p+q)}.
\endaligned
$$
Avec la relation de compatibilit\'e :
$$
[\alpha,\beta\wedge\gamma]=[\alpha,\beta]\wedge\gamma,
$$
ceci prouve la relation $(1.3)=(2.2)+(3.2)$.\\
\end{itemize}

Il reste 3 relations ne d\'ependant pas des relations de compatibilit\'es. On expose ici de fa\c con moins d\'etaill\'ee leur preuve :\\
\begin{itemize}
\item[4.] $(1.4)=(2.3)+(3.3)$.

Les termes de (1.4) sont :
$$\aligned
(1.4)=&(\alpha_{1}\diamond\alpha_{p+1})\otimes\hskip-0.5cm\sum_{\begin{smallmatrix}k\in\{2,\dots, p+q-1\}\setminus\{p+1\}\\ \sigma\in Sh_{p-1,q-1}\end{smallmatrix}}\hskip-0.5cm (-1)^{\alpha_1+\alpha_{p+1}+\alpha_{\sigma^{-1}(2)}+\dots+\alpha_{\sigma^{-1}(k-1)}}(-1)^{\alpha_{p+1}(\alpha_2+\dots+\alpha_p)}\\
&\hskip 1cm\varepsilon_\alpha(\sigma)\alpha_{\sigma^{-1}(2)}\otimes\dots\otimes m_2(\alpha_{\sigma^{-1}(k)},\alpha_{\sigma^{-1}(k+1)})\otimes\cdots\widehat
{_{p+1}}\dots\otimes\alpha_{\sigma^{-1}(p+q)}.
\endaligned
$$
Dans $(1.4)$, les termes qui correspondent à ($\sigma^{-1}(k)\in\{2,\dots,p\}$ et $\sigma^{-1}(k+1)\in\{p+2,\dots,p+q\}$) et ($\sigma^{-1}(k+1)\in\{2,\dots,p\}$ et $\sigma^{-1}(k)\in\{p+2,\dots,p+q\}$) se simplifient entre eux. Il ne reste que les termes qui correspondent à
\subitem $\sigma^{-1}(k),\sigma^{-1}(k+1)\in\{2,\dots,p\}$ : ces termes correspondent \`a ceux de (2.3),
\subitem $\sigma^{-1}(k),\sigma^{-1}(k+1)\in\{p+2,\dots,p+q\}$ : ces termes sont ceux de (3.3).\\

\item[5.] $(1.5)=(2.4)$.

Les termes de (1.5) sont :
$$\aligned
(1.5)&=D_2(\alpha_1,\alpha_2)\otimes\displaystyle\sum_{3\leq
k\leq p}(-1)^{\alpha_{p+1}(\alpha_{k+1}+\dots+\alpha_p)}
\alpha_{3}\otimes\dots\otimes\alpha_{k-1}\otimes[\alpha_{k},\alpha_{p+1}]\otimes
\\&\otimes\sum_{\sigma\in Sh_{p-k,q-1}}\varepsilon_\alpha(\sigma)\alpha_{\sigma^{-1}(k+1)}\otimes\dots
\widehat{_{p+1}}\dots\otimes\alpha_{\sigma^{-1}(p+q)}.\endaligned
$$
Ceci est exactement le terme $(2.4)$.\\

\item[6.] $(1.6)=(2.5)+(3.4)$.

Les termes de (1.6) sont :
$$\aligned
(1.6)=&\sum_{2\leq k\leq p}(-1)^{\alpha_{p+1}(\alpha_{k+1}+\dots+\alpha_p)}
\alpha_{1}\otimes\dots\otimes\alpha_{k-1}\otimes[\alpha_{k},\alpha_{p+1}]\otimes\sum_{\begin{smallmatrix}\sigma\in Sh_{p-k,q-1}
\\ k+1\leq j\leq p+q-1\end{smallmatrix}}\varepsilon_\alpha(\sigma)\\
&\hskip 1cm\alpha_{\sigma^{-1}(k+1)}
\otimes\dots\otimes m_2(\alpha_{\sigma^{-1}(j)},\alpha_{\sigma^{-1}(j+1)})\otimes\cdots\widehat
{_{p+1}}\dots\otimes\alpha_{\sigma^{-1}(p+q)}.
\endaligned
$$
Dans $(1.6)$, les termes qui correspondent à ($\sigma^{-1}(j)\in\{2,\dots,p\}$ et $\sigma^{-1}(j+1)\in\{p+2,\dots,p+q\}$) et ($\sigma^{-1}(j+1)\in\{2,\dots,p\}$ et $\sigma^{-1}(j)\in\{p+2,\dots,p+q\}$) se simplifient entre eux. Il ne reste que les termes qui correspondent à
\subitem $\sigma^{-1}(j),\sigma^{-1}(j+1)\in\{2,\dots,p\}$ : ces termes correspondent \`a ceux de (2.5),
\subitem $\sigma^{-1}(j),\sigma^{-1}(j+1)\in\{p+2,\dots,p+q\}$ : ces termes sont ceux de (3.4).\\
\end{itemize}

\end{proof}

Maintenant, puisque $(\mathcal H,R_2,D)$ est une alg\`ebre pr\'e-Lie diff\'erentielle gradu\'ee, on construit comme dans la section pr\'ec\'edente, sa $preL_\infty$ algèbre enveloppante $(\mathcal{H}[1]\otimes S(\mathcal{H}[1]),\Delta,Q)$. Explicitement, dans $\mathcal H[1]$, le degré est $deg'(X)=deg(X)-1=x'$, on pose
$$
R'_2(X,Y)=(-1)^{x'}R_2(X,Y)~~~\text{ et }~~\ell_2(X,Y)=R_2'(X,Y)+(-1)^{x'y'}R_2'(Y,X).~
$$

On prolonge ensuite \`a $\mathcal{H}[1]\otimes S(\mathcal{H}[1])$, $D$ et $R'_2$ en $m$ et $R$ par :
$$\aligned
m(X_0\otimes X_1\dots X_n)&=D(X_0)\otimes X_1\dots X_n+\\
&\hskip 1cm+(-1)^{x_0'}\sum_{j=1}^{n}\varepsilon_{x'}\Big(\begin{smallmatrix}
x_1\dots x_n\\ x_j~x_1\dots\hat{_j}\dots x_n\end{smallmatrix}\Big)X_0\otimes
D(X_j) X_1\dots\widehat{_j}\dots X_n\\
&=D(X_0)\otimes X_1\dots X_n+(-1)^{x_0'}X_0\otimes m'(X_1\dots X_n),
\endaligned
$$
où $m'$ est la codérivation donnée comme ci-dessus dans le cas des
algèbres de Gerstenhaber.

De m\^eme,
$$\aligned
&R(X_0\otimes X_1\dots
X_n)=\sum_{i=1}^n\varepsilon_{x'}\Big(\begin{smallmatrix} x_1\dots
x_n\\ x_i~x_1\dots\hat{_i}\dots x_n\end{smallmatrix}\Big)R_2'(X_0,
X_i) \otimes
X_{1}\dots \widehat{_i}\dots X_{n}+\\
&+(-1)^{x_0'}\sum_{i<j}\varepsilon_{x'}\Big(\begin{smallmatrix}
x_1\dots x_n\\ x_i~x_j~x_1\dots\hat{_i}\dots\hat{_j}\dots
x_n\end{smallmatrix}\Big) X_0\otimes
\ell_2(X_i, X_j)X_{1}\dots\widehat{_{ij}}\dots X_{n}\\
&=\sum_{i=1}^n\varepsilon_{x'}\Big(\begin{smallmatrix}
x_1\dots x_n\\ x_i~x_1\dots\hat{_i}\dots x_n\end{smallmatrix}\Big)R_2'(X_0, X_i) \otimes
X_{1}\dots \widehat{_i}\dots X_{n}+(-1)^{x_0'}X_0\otimes\ell'(X_1\dots X_n),
\endaligned
$$
où $\ell'$ est la codérivation donnée comme ci-dessus dans le cas
des algèbres de Gerstenhaber.

Si on pose $Q=(m+R)$, alors $Q$ est une cod\'erivation de $\Delta$ vérifiant $deg'(Q)=1$ et $Q^2=0$. Ainsi :

\begin{cor}

\

Le triplet $\left(\mathcal{H}[1]\otimes S(\mathcal{H}[1]),\Delta,Q=m+R\right)$ est une cogèbre permutative codifférentielle, c'est à dire une $preL_\infty$ alg\`ebre.\\
\end{cor}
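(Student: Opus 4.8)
The plan is to read the corollary as a direct instance of the enveloping-$preL_\infty$ construction established earlier for a differential graded pre-Lie algebra (Proposition 4.6), applied to $(\mathcal{H}, R_2, D)$. That algebra is differential graded pre-Lie by the theorem just proved (Theorem 6.1), so the hypotheses of the construction are met. Since that proposition already asserts, for any differential graded pre-Lie algebra $(V, \diamond, d)$, that the associated operator on $(V[1] \otimes S(V[1]), \Delta)$ is a degree-$1$ coderivation of square zero, essentially all the analytic content is pre-packaged; the task reduces to checking that the operator $Q = m + R$ written just above the statement is exactly the $Q$ produced by that proposition.

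First I would invoke the description of the cofree permutative coalgebra on $\mathcal{H}[1]$ as $(\mathcal{H}[1] \otimes S(\mathcal{H}[1]), \Delta)$ (Proposition 4.3); this makes $\Delta$ automatically a permutative coproduct, so the only property still to be established about the triple is that $Q$ is a coderivation of $\Delta$ with $deg'(Q) = 1$ and $Q^2 = 0$. I would then specialize the enveloping construction to $V = \mathcal{H}$, $\diamond = R_2$, $d = D$, taking $Q_1 = D$, $Q_2(X \otimes Y) = (-1)^{x'} R_2(X,Y) = R_2'(X,Y)$ and $Q_k = 0$ for $k \geq 3$. The legitimacy of this step rests precisely on Theorem 6.1.

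The one genuinely required verification is the term-by-term identification of operators. I would expand the closed formula for $Q$ and match it against $m + R$: the contributions carrying $Q_1 = D$ (the head term $D(X_0) \otimes X_1 \dots X_n$ together with the sum in which $D$ acts on an interior factor) reassemble into $m$, with the interior part recognized as the $L_\infty$-type coderivation $m'$ extending $D$; the contributions carrying $Q_2 = R_2'$ reassemble into $R$, the head terms $R_2'(X_0, X_i)$ together with the symmetrized interior terms built from $\ell_2$ forming the coderivation $\ell'$. I expect this bookkeeping --- keeping the degree-shift convention $x' = x - 1$ and the shuffle signs $\varepsilon_{x'}$ consistent between the two presentations --- to be the only delicate point, and it is purely formal. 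Once the operators are matched, the enveloping-$preL_\infty$ proposition yields immediately that $Q = m + R$ is a degree-$1$ coderivation of $\Delta$ with $Q^2 = 0$; hence $(\mathcal{H}[1] \otimes S(\mathcal{H}[1]), \Delta, Q)$ is a permutative codifferential coalgebra, that is, a $preL_\infty$ algebra --- by construction the enveloping $preL_\infty$ algebra of $(\mathcal{H}, R_2, D)$.
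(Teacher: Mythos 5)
Your proposal is correct and follows exactly the paper's own route: the paper likewise obtains the corollary by feeding the differential graded pre-Lie algebra $(\mathcal{H},R_2,D)$ from the preceding theorem into the enveloping $preL_\infty$ construction for pre-Lie algebras, with $m$ collecting the $Q_1=D$ contributions and $R$ the $Q_2=R_2'$ contributions (the interior symmetrized terms giving $\ell'$). The term-matching you flag as the only delicate point is indeed all the paper does, and it does it by simply writing out the formulas for $m$ and $R$ and asserting the identification.
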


Il reste \`a construire sur cette $preL_\infty$ alg\`ebre le coproduit $\kappa$ qui fera de $\mathcal H[1]\otimes S(\mathcal H[1])$ une cog\`ebre de Leibniz. C'est le but de la section suivante.\\


\section{Le coproduit $\kappa$}

\

D'abord, on a vu que $(\mathcal{H},\delta)$ est une cog\`ebre de Leibniz. On rappelle que :
$$
\delta(X)=\delta(x_1\otimes \dots \otimes x_n)=\sum_{1\leq k\leq n-1}(x_1\otimes \dots \otimes x_{k})\bigotimes\mu_{n-k}(x_{k+1}\otimes \dots \otimes x_{n}).
$$
On note simplement le coproduit $\delta$ :
$$
\delta(X)=\sum_{U\otimes V=X}U\otimes \mu V.
$$

On se place maintenant dans $\mathcal{H}[1]$, le coproduit $\delta$ sym\'etris\'e sera not\'e $\kappa$.\\

\begin{defn}

\

Sur $\mathcal H[1]$, on d\'efinit le cocrochet suivant :
$$
\kappa(X_0)=\sum_{U_0\otimes V_0=X_0}(-1)^{u_0'}\Big(U_0\bigotimes\mu V_0+(-1)^{u_0'v_0'}\mu V_0\bigotimes U_0\Big)
$$
Ce cocrochet $\kappa$ se prolonge en un coproduit, toujours not\'e $\kappa$, d\'efini sur $\mathcal{H}[1]\otimes S(\mathcal{H}[1])$ par :
$$\aligned
\kappa(X_0\otimes X_1\dots X_n)&=\sum_{\begin{smallmatrix}
U_0\otimes V_0= X_0\\ I\cup J=\{1,\dots,n\}\end{smallmatrix}}(-1)^{u_0'}\times\Big(\varepsilon_{x'}\Big(\begin{smallmatrix}
u_0v_0x_1\dots x_n\\ u_0~x_I~v_0~x_J\end{smallmatrix}\Big)U_0\otimes X_I\bigotimes\mu V_0.X_J\\
&+\varepsilon_{x'}\Big(\begin{smallmatrix} u_0v_0x_1\dots x_n\\ v_0~x_J~u_0~x_I\end{smallmatrix}\Big)\mu V_0\otimes X_J\bigotimes U_0.
X_I\Big)+(-1)^{x_0'}X_0\otimes \kappa'(X_1... X_n),
\endaligned
$$
o\`u $\kappa'$ est le cocrochet sur $S^+(\mathcal{H}[1])$, d\'efini comme dans la section sur les algèbres de Gerstenhaber par :
\begin{align*}
&\kappa'(X_1\dots X_n)=\sum_{\begin{smallmatrix}1\leq s\leq n\\ I\cup J=\{1,\dots,n\}\setminus\{s\}\end{smallmatrix}}(-1)^{\sum_{i<s}x_i'}\sum_{\begin{smallmatrix}U_s\otimes V_s=X_s\\ U_s,V_s\neq\emptyset\end{smallmatrix} }(-1)^{u_s'}\times\cr&\times\left(\varepsilon_{x'}\left(\begin{smallmatrix} x_1\dots x_n\\ x_I~u_s~v_s~x_J\end{smallmatrix}\right)X_I. U_s\bigotimes \mu V_s . X_J+\varepsilon_{x'}\left(\begin{smallmatrix} x_1\dots x_n\\ x_I~v_s~u_s~x_J\end{smallmatrix}\right)X_I.\mu V_s\bigotimes U_s . X_J\right).
\end{align*}
\end{defn}

Maintenant $\kappa$ est un coproduit de degr\'e 1 qui, en un certain sens, prolonge le coproduit de Leibniz $\delta$, de degr\'e 0, d\'efini sur $\mathcal H$. En fait, on peut dire que $(\mathcal H[1]\otimes S(\mathcal H[1]),\kappa)$ est une cog\`ebre de Leibniz, en tenant compte de ce d\'ecalage de degr\'e. C'est \`a dire :

\begin{prop}

\

Le coproduit $\kappa$ vérifie:
$$
-(id\otimes \kappa)\circ\kappa=\big(\kappa\otimes id+ \tau_{23}\circ(\kappa\otimes id)\big)\circ\kappa.
$$
\end{prop}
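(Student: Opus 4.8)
La strat\'egie est de v\'erifier l'identit\'e par calcul direct, en se ramenant \`a la propri\'et\'e qui fait de $(\mathcal H,\delta)$ une cog\`ebre de Leibniz, \`a savoir $(id\otimes\delta)\circ\delta=(\delta\otimes id-\tau_{23}\circ(\delta\otimes id))\circ\delta$, ainsi qu'au Lemme $\mu_{p+q}\circ sh_{p,q}=0$. En effet, $\kappa$ est par construction le sym\'etris\'e et d\'ecal\'e en degr\'e du coproduit de Leibniz $\delta$, prolong\'e \`a $W=\mathcal H[1]\otimes S(\mathcal H[1])$ : sur un cog\'en\'erateur $X_0$ il sym\'etrise $\delta(X_0)=\sum_{U_0\otimes V_0=X_0}U_0\otimes\mu V_0$, et sur la partie sym\'etrique $X_1\dots X_n$ il agit par le cocrochet $\kappa'$. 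L'identit\'e annonc\'ee n'est autre que la forme d\'ecal\'ee, en degr\'e $1$, de l'identit\'e de coLeibniz de degr\'e $0$ : le changement de signe entre les deux, soit $-(id\otimes\kappa)$ \`a gauche et $+\tau_{23}$ \`a droite, provient des signes de Koszul introduits par le d\'ecalage $deg'=deg-1$, suivant la correspondance rappel\'ee \`a la Section 2.

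Concr\`etement, j'appliquerais $\kappa$ deux fois \`a un \'el\'ement $X_0\otimes X_1\dots X_n$. Chacun des deux membres produit des termes dans $W\otimes W\otimes W$, associ\'es \`a un d\'ecoupage de l'\'el\'ement en trois morceaux. Je classerais ces termes suivant l'endroit o\`u tombent les deux coupures successives : (i) les deux coupures portent sur le facteur distingu\'e $X_0$ ; (ii) une coupure porte sur $X_0$ et l'autre scinde ou distribue un $X_s$ de la partie sym\'etrique ; (iii) les deux coupures portent sur la partie sym\'etrique, c'est-\`a-dire les termes issus de $(id\otimes\kappa')\circ\kappa'$ et de $(\kappa'\otimes id)\circ\kappa'$.

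Pour les termes de type (i), la v\'erification se ram\`ene exactement \`a l'identit\'e de coLeibniz de $\delta$ sur $\mathcal H$, transport\'ee avec les signes du d\'ecalage : c'est l\`a qu'appara\^it le terme en $\tau_{23}$ du second membre et que le signe relatif change par rapport au cas non d\'ecal\'e. Pour les termes de type (iii), on utilise la propri\'et\'e de coLeibniz analogue de $\kappa'$, qui se d\'emontre de la m\^eme mani\`ere \`a partir de la structure de cog\`ebre de Leibniz de $(\mathcal H,\delta)$ relev\'ee \`a $S^+(\mathcal H[1])$ (parall\`element \`a la construction du cas Gerstenhaber, mais pour le coproduit tordu par $\mu$). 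Enfin, les termes mixtes de type (ii) se r\'epartissent entre les trois occurrences $(id\otimes\kappa)\circ\kappa$, $(\kappa\otimes id)\circ\kappa$ et $\tau_{23}\circ(\kappa\otimes id)\circ\kappa$, et se compensent ou se recombinent deux \`a deux gr\^ace \`a la commutativit\'e des battements et \`a l'action sign\'ee de $\mu$, en particulier \`a l'annulation $\mu_{p+q}\circ sh_{p,q}=0$.

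La principale difficult\'e sera la gestion des signes : il faut contr\^oler simultan\'ement les signatures $\varepsilon_{x'}$ des permutations des $X_i$, les signes $(-1)^{u_0'}$ et $(-1)^{x_0'}$ de la d\'efinition de $\kappa$, et les signes de Koszul dus au passage de l'op\'erateur $\kappa$, de degr\'e $1$, au-dessus des facteurs. Il faudra en particulier v\'erifier avec soin que, dans les termes mixtes de type (ii), les appariements au sein du second membre reconstituent pr\'ecis\'ement ceux du premier, le facteur $\tau_{23}$ assurant l'\'echange des deuxi\`eme et troisi\`eme composantes n\'ecessaire pour r\'ealigner les morceaux issus de $X_0$ et ceux de la partie sym\'etrique. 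Une fois ce d\'ecoupage et cette comptabilit\'e men\'es \`a bien, les contributions des trois familles (i), (ii) et (iii) s'\'equilibrent s\'epar\'ement entre les deux membres, ce qui \'etablit l'identit\'e.
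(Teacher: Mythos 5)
Your proposal follows essentially the same route as the paper: both classify the terms of the triple tensor according to where the two successive cuts fall (twice on the distinguished factor $X_0$; once on $X_0$ and once on some $X_t$; twice on the symmetric part), settle the last family by the coJacobi identity of $\kappa'$ proved as in the $G_\infty$ case, and reduce the first two families to a term-by-term matching and cancellation of Koszul signs. The only inaccuracy is your appeal to $\mu_{p+q}\circ sh_{p,q}=0$ and to the commutativity of shuffles for the mixed terms: no shuffles intervene in $\kappa$ (they appear in $R_2$, hence in the later coderivation proposition, not here), and the eight surplus mixed terms of the right-hand side cancel by pure sign computations of the type $\varepsilon_3=-\varepsilon_2$.
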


\begin{proof}

\

D'une part, on a

$$\aligned
&-(id\otimes\kappa)\circ\kappa(X_0\otimes X_1... X_n)=\\
&=-(id\otimes\kappa)\Big(\sum_{\begin{smallmatrix}U_0\otimes V_0=X_0\\ I\cup
J=\{1,\dots,n\}\end{smallmatrix}}(-1)^{u_0'}\times\Big(\varepsilon_{x'}\left(\begin{smallmatrix}u_0v_0x_1\dots x_n\\ u_0~x_I~v_0~x_J\end{smallmatrix}\right)U_0\otimes X_I\bigotimes\mu V_0.X_J+\\
&\hskip 1cm+\varepsilon_{x'}\left(\begin{smallmatrix} u_0v_0x_1\dots x_n\\ v_0~x_J~u_0~x_I\end{smallmatrix}\right)\mu V_0\otimes X_J\bigotimes U_0.
X_I\Big)+(-1)^{x_0'}X_0\otimes \kappa'(X_1\dots X_n)\Big)\\
&=-\sum_{\begin{smallmatrix}U_0\otimes V_0=X_0\\ I\cup J=\{1,\dots,n\}\end{smallmatrix}}(-1)^{u_0'}\times\Big(\varepsilon_{x'}\left(\begin{smallmatrix}u_0v_0x_1\dots x_n\\ u_0~x_I~v_0~x_J\end{smallmatrix}\right)U_0\otimes X_I\bigotimes\kappa'(\mu V_0.X_J)+\\
&\hskip 1cm+\varepsilon_{x'}\Big(\begin{smallmatrix} u_0v_0x_1\dots x_n\\ v_0~x_J~u_0~x_I\end{smallmatrix}\Big)\mu V_0\otimes X_J\bigotimes
\kappa'(U_0. X_I)\Big)
-(-1)^{x_0'}X_0\otimes (id\otimes\kappa')\circ\kappa'(X_1\dots X_n)\\
&=(1.1)+(1.2),
\endaligned
$$
o\`u $(1,2)$ est le dernier terme, en $X_0\otimes\kappa'\circ(id\otimes\kappa')$.

D'autre part, on a
$$\aligned
&(\kappa\otimes id)\circ\kappa(X_0\otimes X_1\dots X_n)=\\
&=(\kappa\otimes id)\Big(\sum_{\begin{smallmatrix}U_0\otimes V_0=X_0\\ I\cup J=\{1,\dots,n\}\end{smallmatrix}}(-1)^{u_0'}\times\Big(\varepsilon_{x'}\Big(\begin{smallmatrix}
u_0v_0x_1\dots x_n\\ u_0~x_I~v_0~x_J\end{smallmatrix}\Big)U_0\otimes X_I\bigotimes\mu V_0.X_J\\
&\hskip 1cm+\varepsilon_{x'}\Big(\begin{smallmatrix} u_0v_0x_1\dots x_n\\ v_0~x_J~u_0~x_I\end{smallmatrix}\Big)\mu V_0\otimes X_J\bigotimes U_0.
X_I\Big)+(-1)^{x_0'}X_0\otimes \kappa'(X_1\dots X_n)\Big)\\
&=\sum_{\begin{smallmatrix}U_0\otimes V_0=X_0\\ I\cup J=\{1,\dots,n\}\end{smallmatrix}}(-1)^{u_0'}\times\Big(\varepsilon_{x'}\Big(\begin{smallmatrix}
u_0v_0x_1\dots x_n\\ u_0~x_I~v_0~x_J\end{smallmatrix}\Big)
\kappa(U_0\otimes X_I)\bigotimes\mu V_0.X_J+\\
&\hskip 1cm+\varepsilon_{x'}\Big(\begin{smallmatrix} u_0v_0x_1\dots x_n\\
v_0~x_J~u_0~x_I\end{smallmatrix}\Big)\kappa(\mu V_0\otimes X_J)\bigotimes U_0. X_I\Big)+(-1)^{x_0'}\kappa(X_0)\otimes \kappa'(X_1\dots X_n)+\\
&\hskip 1cm+(-1)^{x_0'}X_0\otimes (\kappa'\otimes id)\circ\kappa'(X_1\dots X_n)\\
&=(2.1)+(2.2),
\endaligned
$$
o\`u $(2,2)$ est le dernier terme, en $X_0\otimes (\kappa'\otimes id)\circ\kappa'$.

Et on a aussi :
$$\aligned
&\tau_{23}\circ(\kappa\otimes id)\circ\kappa(X_0\otimes X_1\dots X_n)=\\
&=\tau_{23}\Big(\sum_{\begin{smallmatrix}U_0\otimes V_0=X_0\\ I\cup J=\{1,\dots,n\}\end{smallmatrix}}(-1)^{u_0'}\times\Big(\varepsilon_{x'}\left(\begin{smallmatrix}
u_0v_0x_1\dots x_n\\ u_0~x_I~v_0~x_J\end{smallmatrix}\right)\kappa(U_0\otimes X_I)\bigotimes\mu V_0.X_J+\\
&\hskip 1cm+\varepsilon_{x'}\left(\begin{smallmatrix} u_0v_0x_1\dots x_n\\ v_0~x_J~u_0~x_I\end{smallmatrix}\right)\kappa(\mu V_0\otimes X_J)\bigotimes U_0. X_I\Big)+(-1)^{x_0'}\kappa(X_0)\otimes\kappa'(X_1\dots X_n)\Big)+\\
&\hskip 1cm+(-1)^{x_0'}X_0\otimes \tau_{23}\circ(\kappa'\otimes id)\circ\kappa'(X_1\dots X_n)\\
&=(3.1)+(3.2),
\endaligned
$$
o\`u $(3.2)$ est le terme en $X_0\otimes \tau_{23}\circ(\kappa'\otimes id)\circ\kappa'$.

L'identité de coJacobi est vérifiée par $\kappa'$ : elle se montre comme pour les $G_\infty$-alg\`ebres (\cite{[AAC]}), donc $(1.2)=(2.2)+(3.2)$.\\

Dans tous les termes restants, l'\'el\'ement $X_0$ a \'et\'e coup\'e au moins une fois. Ces termes correspondent donc au cas où on coupe deux fois $X_0$ par $\kappa$ et au cas où on coupe une fois $X_0$ et une fois un des $X_t$ ($t>0$) par $\kappa$.\\

Commençons d'abord par le cas où on coupe deux fois $X_0$ par $\kappa$. Notons $X_0\mapsto U_0\otimes V_0\otimes W_0$ cette double c\'esure.

\begin{itemize}
\item[-] Dans $(1.1)$, on trouve les termes :
$$
\varepsilon_1U_0\bigotimes V_0\bigotimes W_0+\varepsilon_2U_0\bigotimes W_0\bigotimes V_0+\varepsilon_3W_0\bigotimes U_0\bigotimes
V_0+\varepsilon_4W_0\bigotimes V_0\bigotimes U_0.
$$

\item[-] Dans $(2.1)$, on trouve les termes :
$$
\nu_1U_0\bigotimes V_0\bigotimes W_0+\nu_2V_0\bigotimes U_0\bigotimes W_0+\nu_3V_0\bigotimes W_0\bigotimes U_0+\nu_4W_0\bigotimes V_0\bigotimes U_0.
$$

\item[-] Dans $(3.1)$, on trouve les termes :
$$
\rho_1U_0\bigotimes W_0\bigotimes V_0+\rho_2V_0\bigotimes W_0\bigotimes U_0+\rho_3V_0\bigotimes U_0\bigotimes W_0+\rho_4W_0\bigotimes U_0\bigotimes V_0.
$$
\end{itemize}

On vérifie que $\varepsilon_1=\nu_1$, $\varepsilon_2=\rho_1$, $\varepsilon_3=\rho_4$, $\varepsilon_4=\nu_4$, $\nu_2=-\rho_3$ et
$\nu_3=-\rho_2$. Expliquons par exemple l'\'egalit\'e $\varepsilon_1=\nu_1$.

Pour $\varepsilon_1$, on a effectu\'e les op\'erations ($Y_0=V_0\otimes W_0$) :
$$
X_0\stackrel{\kappa}{\longmapsto}(-1)^{u'_0}U_0\bigotimes Y_0\stackrel{-id\otimes\kappa}{\longmapsto}-(-1)^{u'_0}(-1)^{u'_0+v'_0}U_0\bigotimes V_0\bigotimes W_0.
$$
Donc $\varepsilon_1=(-1)^{v'_0+1}$.

Pour $\nu_1$, on a effectu\'e les op\'erations ($Y_0=U_0\otimes V_0$, $y'_0=u'_0+v'_0+1$) :
$$
X_0\stackrel{\kappa}{\longmapsto}(-1)^{u'_0+v'_0+1}Y_0\bigotimes W_0\stackrel{\kappa\otimes id}{\longmapsto}(-1)^{u'_0}(-1)^{u'_0+v'_0+1}U_0\bigotimes V_0\bigotimes W_0.
$$
Donc $\nu_1=(-1)^{v'_0+1}=\varepsilon_1$.\\

Etudions maintenant le cas où on coupe une fois $X_0$ et une fois un des $X_t$ ($t>0$) par $\kappa$.

Posons $X_0\mapsto U_0\otimes V_0$, et $X_t\mapsto U_t\otimes V_t$. Cherchons les termes correspondants. L'influence des $X_s$ ($s\neq t$) sur le signe provient uniquement de l'application de la r\`egle de Koszul. On peut donc les n\'egliger dans la suite du calcul.\\

\begin{itemize}
\item[-] Dans $(1.1)$, nécessairement $\kappa$ agit sur $X_0$ et $(id\otimes \kappa)$ sur $X_t$. On a donc les op\'erations
$$
X_0\otimes X_1\dots X_n\stackrel{\kappa}{\longmapsto}\pm U_0X_t\bigotimes V_0\pm U_0\bigotimes V_0X_t\pm V_0X_t\bigotimes U_0\pm V_0\bigotimes U_0X_t.
$$

On applique ensuite $-(id\otimes \kappa)$ sur $U_0\bigotimes V_0X_t$, les termes correspondants sont :
$$
\varepsilon_1U_0\bigotimes V_0U_t\bigotimes V_t+\varepsilon_2U_0\bigotimes V_0V_t\bigotimes U_t+\varepsilon_3U_0\bigotimes U_t\bigotimes
V_tV_0+\varepsilon_4U_0\bigotimes V_t\bigotimes U_tV_0.
$$

De m\^eme, on applique $-(id\otimes \kappa)$ sur $V_0\bigotimes U_0X_t$, les termes correspondants sont :
$$
\varepsilon_5V_0\bigotimes U_0U_t\bigotimes V_t+\varepsilon_6V_0\bigotimes U_0V_t\bigotimes U_t+\varepsilon_7V_0\bigotimes U_t\bigotimes
V_tU_0+\varepsilon_8V_0\bigotimes V_t\bigotimes U_tU_0.
$$

\item[-] On trouve de m\^eme 8 termes dans $(2.1)$ et 8 autres termes dans $(3.1)$, donc 16 termes dans $(2.1)+(3.1)$, 8 d'entre eux sont ceux de $(1.1)$, les 8 restants se simplifient deux \`a deux.\\

Par exemple le premier terme de $(1.1)$ est $\varepsilon_1U_0\bigotimes V_0U_t\bigotimes V_t$. Il appara\^it avec le signe :
$$
\varepsilon_1=-(-1)^{u'_0}(-1)^{u'_0+v'_0}(-1)^{u'_t}=(-1)^{u'_t+v'_0+1}.
$$

Ce terme n'appara\^it pas dans $(3.1)$ et il appara\^it une seule fois dans $(2.1)$, lorsque $\kappa$ coupe $X_t$ et $(\kappa\otimes id)$ coupe $X_0$. Il appara\^\i t avec le signe :
$$
(-1)^{u'_t}(-1)^{u'_0+v'_0+1}(-1)^{u'_0}= (-1)^{u'_t+v'_0+1}=\varepsilon_1.
$$

Enfin un exemple de simplification : dans $(2.1)$ on trouve un
seul terme de la forme $(U_0\otimes U_t)\bigotimes V_t\bigotimes
V_0$. Il appara\^\i t ainsi :
$$
X_0\otimes X_t\mapsto \pm (U_0\otimes X_t)\bigotimes V_0\mapsto \varepsilon_2 (U_0\otimes U_t)\bigotimes V_t\bigotimes V_0.
$$
Ce terme appara\^\i t une seule fois dans $(3.1)$, pr\'ecis\'ement dans $\tau_{23}(\kappa(X_0)\otimes\kappa'(X_1\dots X_n))$, ainsi :
$$
X_0\otimes X_t\mapsto \pm (X_0\otimes U_t)\bigotimes V_t\mapsto \pm (U_0\otimes U_t)\bigotimes V_0\bigotimes V_t\mapsto \varepsilon_3 (U_0\otimes U_t)\bigotimes V_t\bigotimes V_0.
$$
On obtient alors puisque $x'_0=u'_0+v'_0+1$ et $x'_t=u'_t+v'_t+1$,
$$\aligned
\varepsilon_2&=(-1)^{u'_0}(-1)^{v'_0x'_t}(-1)^{u'_t+u'_0}=(-1)^{u'_t+v'_0x'_t},\\
\varepsilon_3&=(-1)^{x'_0+u'_t}(-1)^{u'_0+v'_0u'_t}(-1)^{v'_0v'_t}=-\varepsilon_2.
\endaligned
$$
\end{itemize}

\end{proof}

En fait, les coproduits de Leibniz, $\kappa$ et permutatif, $\Delta$ ont des propri\'et\'es de compatibilit\'es, ce qui fait de $(\mathcal H[1]\otimes S(\mathcal H[1]),\Delta,\kappa)$ une bicog\`ebre au sens de Loday (\cite{[L2]}).\\

\begin{prop}

\

Les coproduits $\Delta$ et $\kappa$ vérifient les relations de compatibilité suivantes :
$$\aligned
(1):& ~(id \otimes
\kappa)\circ\Delta=\tau_{23} \circ(id\otimes \kappa)\circ\Delta,\\
(2):& ~(id\otimes \Delta)\circ\kappa=(\kappa\otimes id)\circ\Delta+\tau_{23}\circ(\kappa\otimes id)\circ\Delta,\\
(3):& ~(\Delta\otimes id)\circ\kappa=(id \otimes \kappa)\circ\Delta+\tau_{23}\circ(\kappa\otimes id)\circ\Delta.
\endaligned
$$
\vskip0.15cm

\end{prop}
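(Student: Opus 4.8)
The plan is to prove the three relations by evaluating each side on a general element $X_0\otimes X_1\cdots X_n$ of $\mathcal H[1]\otimes S(\mathcal H[1])$ and matching the resulting terms, following the method already used for the co-Jacobi identity of $\kappa$ in the preceding proposition and for the $G_\infty$ bicogebra in \cite{[AAC]}. The guiding dichotomy is that $\Delta$ always leaves the distinguished head $X_0$ in its first tensor factor and only redistributes the symmetric letters $X_1,\dots,X_n$, whereas $\kappa$ acts either by splitting the head $X_0$ through the Leibniz coproduct $\delta$ of $\mathcal H$ (the terms carrying a factor $\mu V_0$) or by acting on the symmetric tail through $\kappa'$. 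Once the expansions are sorted this way, the block of terms in which $\kappa$ never touches $X_0$ is governed entirely by the symmetric coproduct $\Delta'$ and the cosymmetric cobracket $\kappa'$ on $S^+(\mathcal H[1])$; by the remark identifying $\Delta$ with $\Delta'$ on the symmetric part, these are exactly the $G_\infty$ compatibility relations already established in \cite{[AAC]}, so this block needs no new argument.

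Relation $(1)$ is the quickest: since $\kappa$ is by construction the symmetrization of $\delta$, it is cosymmetric, $\tau\circ\kappa=\kappa$, where $\tau$ is the signed flip for the degree $deg'$. As $\Delta$ fixes its first factor $X_0\otimes X_I$ and feeds the second factor to $id\otimes\kappa$, the operator $\tau_{23}$ acts only on the two tensor factors produced by $\kappa$; cosymmetry of $\kappa$ therefore gives $\tau_{23}\circ(id\otimes\kappa)\circ\Delta=(id\otimes\kappa)\circ\Delta$ term by term. One only has to check that the degree shift does not introduce a discrepancy in the Koszul signs, which is routine.

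For relations $(2)$ and $(3)$, which are the genuine co-Leibniz compatibilities and are the counterparts, adapted to the permutative--Leibniz setting, of the two $G_\infty$ relations recalled above, the new work concentrates in the terms where $\kappa$ splits the head $X_0$. Writing $X_0\mapsto U_0\otimes\mu V_0$ for such a cut and letting $\Delta$ distribute $X_1,\dots,X_n$ between the two resulting pieces, I would expand $(id\otimes\Delta)\circ\kappa$ and $(\Delta\otimes id)\circ\kappa$ on the left, and $(\kappa\otimes id)\circ\Delta$, $(id\otimes\kappa)\circ\Delta$ and their $\tau_{23}$-images on the right, then match termwise. The decisive input is the co-Leibniz identity of $\delta$ on $\mathcal H$, namely $(id\otimes\delta)\circ\delta=(\delta\otimes id-\tau_{23}\circ(\delta\otimes id))\circ\delta$, which produces precisely the asymmetric pairing of the right-hand sides, while the cosymmetry of $\kappa$ reconciles the different flip operators $\tau_{23}$, $\tau_{23}'$ and $\tau_{12}'$ occurring in the permutative and cocommutative versions; the lemma $\mu_{p+q}\circ sh_{p,q}=0$ serves to discard the spurious configurations in which one tail letter would be split across the two pieces of $X_0$.

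The main obstacle, exactly as in the previous proposition, is the sign bookkeeping rather than the combinatorial structure. One must keep track of the Koszul signs generated by the shift $deg'(X)=deg(X)-1$, by the signed action of the operators $\mu_k$ on tensor products, and by the permutations redistributing the symmetric letters, and verify that the signs produced by the co-Leibniz splitting of $\delta$ agree with those attached to the two terms of each right-hand side. Relation $(1)$ is conceptually the cleanest, resting entirely on the cosymmetry of $\kappa$, and is the shortest; relations $(2)$ and $(3)$ are longer and proceed by the same termwise matching already rehearsed for the co-Jacobi identity of $\kappa$.
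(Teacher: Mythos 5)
Your overall skeleton is the paper's: expand both sides on $X_0\otimes X_1\cdots X_n$, isolate the block where $\kappa$ never touches the head $X_0$ and dispose of it via the $G_\infty$ compatibilities of $\Delta'$ and $\kappa'$ on $S^+(\mathcal H[1])$ established in \cite{[AAC]}, and treat relation $(1)$ by cosymmetry of $\kappa$ (equivalently of $\kappa'$, since the second factor produced by $\Delta$ lies in $S^+(\mathcal H[1])$ where $\kappa$ reduces to $\kappa'$). That part is correct and is exactly what the paper does.

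The gap is in your identification of the \og decisive input\fg{} for relations $(2)$ and $(3)$. You claim the key is the co-Leibniz identity of $\delta$ on $\mathcal H$, with $\mu_{p+q}\circ sh_{p,q}=0$ discarding spurious configurations. Neither is used here, and neither can be: in every composite occurring in $(2)$ and $(3)$, exactly one of the two operators is $\Delta$, and $\Delta$ never splits the head of its argument --- it only redistributes the symmetric letters. Hence $X_0$ is cut \emph{at most once} on each side (a double cut of $X_0$ arises only in the co-Jacobi identity of $\kappa$ with itself, i.e.\ the preceding proposition), so there is no configuration to which $(id\otimes\delta)\circ\delta=(\delta\otimes id-\tau_{23}\circ(\delta\otimes id))\circ\delta$ could apply, and no letter is ever split across the two pieces of $X_0$. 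What actually remains after the $\kappa'$--$\Delta'$ block is handled is a direct termwise identification of single-cut terms of the form $U_0\otimes X_I\bigotimes \mu V_0.X_J\bigotimes X_K$ (and their mirror images) between the two sides, e.g.\ $(1.1)=(2.1)$, $(1.3)=(3.1)$, $(1.2)=(2.2)$, $(1.4)=(3.2)$ in the paper's notation; the asymmetry between the right-hand sides of $(2)$ and $(3)$ comes from whether $\Delta$ acts on the factor containing $U_0$ or the one containing $\mu V_0$, not from any identity satisfied by $\delta$. Your plan as written would send you hunting for a cancellation mechanism that is not there, while leaving unaddressed the sign verification of these single-cut matchings, which is where the real work of the proof lies.
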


\begin{proof}

\

\noindent
$(1)$ On rappelle que :
$$
\Delta (X_0\otimes X_1\dots X_n)=X_0\bigotimes X_1\dots X_n+(-1)^{x_0'}X_0\otimes\Delta'(X_1\dots X_n).
$$
où $\Delta'(X_1\dots X_n)$ est le coproduit défini sur la cogèbre cocommutative $S^{+}(\mathcal H[1])$. On a donc :
\begin{align*}
(id \otimes
\kappa)\circ\Delta (X_0\otimes X_1\dots X_n)=\Big(X_0\bigotimes\kappa'+(-1)^{x_0'}X_0\otimes\big(id \otimes
\kappa'\big)\circ\Delta'\Big)(X_1\dots X_n).\end{align*}
D'autre part,
$$\aligned
\tau_{23}\circ(id \otimes\kappa)\circ\Delta& (X_0\otimes X_1\dots X_n)=\\
&=\Big(X_0\bigotimes \tau_{23}\circ\kappa'+(-1)^{x_0'}X_0\otimes\big(id \otimes\tau_{23}\circ\kappa'\big)\circ\Delta'\Big)(X_1\dots X_n).
\endaligned
$$
Comme $\kappa'$ est cosymétrique, $\tau_{23}\circ\kappa'=\kappa'$. Ainsi,
$$
\tau_{23} \circ(id\otimes\kappa)\circ\Delta=(id \otimes\kappa)\circ\Delta.
$$

\noindent $(2)$ D'une part, on a :
$$\aligned
&(id\otimes \Delta)\circ\kappa(X_0\otimes X_1\dots X_n)=\\
&=(id\otimes \Delta)\Big(\sum_{\begin{smallmatrix}U_0\otimes V_0=X_0 \\ I\cup J=\{1,\dots,n\}\end{smallmatrix}} (-1)^{u_0'}\Big(\varepsilon_{x'}\left(\begin{smallmatrix}u_0~v_0~x_1\dots x_n\\ u_0~x_I~v_0~x_J\end{smallmatrix}\right) U_0\otimes X_I\bigotimes \mu V_0.X_J+\\
&\hskip 2cm+\varepsilon_{x'}\Big(\begin{smallmatrix}u_0~v_0~x_1\dots x_n\\ v_0~x_J~u_0~x_I\end{smallmatrix}\Big)\mu V_0\otimes X_J\bigotimes U_0 .
X_I\Big)+(-1)^{x_0'}X_0\otimes\kappa'( X_1\dots X_n)\Big)\\
&=\hskip-0.7cm\sum_{\begin{smallmatrix}U_0\otimes V_0=X_0 \\ I\cup J\cup K=\{1,\dots,n\};K\neq\emptyset\end{smallmatrix}}
\hskip -0.7cm(-1)^{u_0'}\Big\{\varepsilon_{x'}\Big(\begin{smallmatrix}u_0~v_0~x_1\dots x_n\\ u_0~x_I~v_0~x_{J\cup K}\end{smallmatrix}\Big) \Big(\varepsilon_{x'}\left(\begin{smallmatrix}v_0~x_{J\cup K}\\ v_0~x_J~x_ K\end{smallmatrix}\right) U_0\otimes X_I\bigotimes\mu V_0.X_J\bigotimes X_K+\\
&\hskip 4cm+\varepsilon_{x'}\Big(\begin{smallmatrix}v_0~x_{J\cup K}\\ x_K~v_0~x_J\end{smallmatrix}\Big) U_0\otimes X_I\bigotimes X_K\bigotimes \mu V_0.X_J\Big)+\\
&\hskip 4cm+\varepsilon_{x'}\Big(\begin{smallmatrix}
u_0~v_0~x_1\dots x_n\\ v_0~x_J~u_0~x_{I\cup K}\end{smallmatrix}\Big)\Big(\varepsilon_{x'}\left(\begin{smallmatrix}
u_0~x_{I\cup K}\\ u_0~x_I~x_ K\end{smallmatrix}\right)\mu V_0\otimes X_J\bigotimes U_0.X_I\bigotimes X_K+\\
&\hskip 4cm+\varepsilon_{x'}\Big(\begin{smallmatrix}
u_0~x_{I\cup K}\\ x_K~u_0~x_I\end{smallmatrix}\Big)\mu V_0\otimes X_J\bigotimes X_K\bigotimes U_0.X_I\Big)\Big\}+\\
&\hskip 4cm+(-1)^{x_0'}X_0\otimes(id\otimes \Delta')\circ\kappa'( X_1\dots X_n)\\
&=(1.1)+(1.2)+(1.3)+(1.4)+(1.5).
\endaligned
$$

D'autre part, on a :
$$
\aligned
&(\kappa\otimes id)\circ\Delta(X_0\otimes X_1\dots X_n)=\\
&=(\kappa\otimes id)\Big(X_0\bigotimes X_1\dots X_n+(-1)^{x_0'}X_0\otimes\Delta'(X_1\dots X_n)\Big)\\
&=\sum_{U_0\otimes V_0=X_0 }(-1)^{u_0'}\Big(\Big(U_0\bigotimes\mu V_0\bigotimes X_1 \dots X_n+\varepsilon_{x'}\Big(\begin{smallmatrix} u_0~v_0\\
v_0~u_0\end{smallmatrix}\Big)\mu V_0\bigotimes U_0\bigotimes X_1\dots X_n\Big)\\
&\hskip 3cm+\sum_{\begin{smallmatrix} I\cup J\cup K =\{1,\dots,n\}\\ I\cup K\neq\emptyset, J\neq\emptyset\end{smallmatrix}}\varepsilon_{x'}\left(\begin{smallmatrix} u_0~v_0~x_{I\cup J}~x_K\\ u_0~x_I~v_0~x_J~x_K\end{smallmatrix}\right) U_0\otimes X_I\bigotimes\mu V_0.X_J\bigotimes X_K+\\
&\hskip 3cm+\varepsilon_{x'}\Big(\begin{smallmatrix} u_0~v_0~x_{I\cup J}~x_K\\ v_0~x_J~u_0~x_I~x_K\end{smallmatrix}\Big)\mu V_0\otimes X_J\bigotimes U_0.X_I\bigotimes X_K\Big)+\\
&\hskip 3cm+X_0\otimes(\kappa'\otimes id)\circ\Delta'(X_1\dots X_n)\\
&=\sum_{U_0\otimes V_0=X_0}(-1)^{u_0'}\sum_{\begin{smallmatrix}
I\cup J\cup K =\{1,\dots,n\}\\ K\neq\emptyset\end{smallmatrix}}
\varepsilon_{x'}\left(\begin{smallmatrix} u_0~v_0~x_1 \dots x_n\\
u_0~x_I~v_0~x_J~x_K\end{smallmatrix}\right)U_0\otimes
X_I\bigotimes
\mu V_0.X_J\bigotimes X_K+\\
&\hskip 3cm+\varepsilon_{x'}\left(\begin{smallmatrix} u_0~v_0~x_1 \dots~x_n\\ v_0~x_J~u_0~x_I~x_K\end{smallmatrix}\right)\mu V_0\otimes X_J\bigotimes
U_0.X_I\bigotimes X_K+\\
&\hskip 3cm+X_0\otimes(\kappa'\otimes id)\circ\Delta'(X_1\dots X_n),\\
\endaligned
$$
Ce qu'on note
$$
(\kappa\otimes id)\circ\Delta(X_0\otimes X_1\dots X_n)=(2.1)+(2.2)+(2.3).
$$
De m\^eme,
$$
\aligned
&\tau_{23}\circ(\kappa\otimes id)\circ\Delta(X_0\otimes X_1\dots X_n)=\\
&=\hskip-0.5cm\sum_{U_0\otimes V_0=X_0}(-1)^{u_0'}\hskip-0.5cm\sum_{\begin{smallmatrix} I\cup J\cup K =\{1,\dots,n\}\\
K\neq\emptyset\end{smallmatrix}}\hskip-0.5cm\varepsilon_{x'}\left(\begin{smallmatrix} u_0~v_0~x_1 \dots x_n\\
u_0~x_I~v_0~x_J~x_K\end{smallmatrix}\right)\varepsilon_{x'}\left(\begin{smallmatrix} v_0~x_J~x_K\\
x_K~v_0~x_J\end{smallmatrix}\right)U_0\otimes X_I\bigotimes X_K\bigotimes\mu V_0.X_J+\\
&\hskip 3cm+\varepsilon_{x'}\Big(\begin{smallmatrix} u_0~v_0~x_1 \dots~x_n\\
v_0~x_J~u_0~x_I~x_K\end{smallmatrix}\Big)\varepsilon_{x'}\left(\begin{smallmatrix} u_0~x_I~x_K\\
x_K~u_0~x_I\end{smallmatrix}\right)\mu V_0\otimes X_J\bigotimes X_K\bigotimes U_0.X_I+\\
&\hskip 3cm+X_0\otimes\tau_{23}\circ(\kappa'\otimes id)\circ\Delta'(X_1\dots X_n)\\
&=(3.1)+(3.2)+(3.3).
\endaligned
$$

On vérifie que $(1.5)=(2.3)+(3.3)$, gr\^ace \`a l'identit\'e de coLeibniz entre $\Delta'$ et $\kappa'$, \'etablie comme dans \cite{[AAC]}. De m\^eme, $(1.1)=(2.1)$, $(1.3)=(3.1)$, $(1.2)=(2.2)$ et $(1.4)=(3.2)$.\\

\noindent $(3)$ D'une part, on a
$$\aligned
&(\Delta\otimes id)\circ\kappa(X_0\otimes X_1\dots X_n)=\\
&=(\Delta\otimes id)\Big(\sum_{\begin{smallmatrix}U_0\otimes V_0=X_0 \\ I\cup J\cup K=\{1,\dots,n\}\end{smallmatrix}}
(-1)^{u_0'}\Big(\varepsilon_{x'}\left(\begin{smallmatrix}u_0~v_0~x_1\dots x_n\\ u_0~x_{I\cup K}~v_0~x_J\end{smallmatrix}\right) U_0\otimes X_{I\cup K}\bigotimes\mu V_0.X_J+\\
&\hskip 1cm+\varepsilon_{x'}\left(\begin{smallmatrix}u_0~v_0~x_1\dots x_n\\ v_0~x_{J\cup K}~u_0~x_I\end{smallmatrix}\right)\mu V_0\otimes X_{J\cup
K}\bigotimes U_0.X_I\Big)+(-1)^{x_0'}X_0\otimes\kappa'( X_1\dots X_n)\Big)\\
&=\sum_{\begin{smallmatrix}U_0\otimes V_0=X_0 \\ I\cup J\cup
K=\{1,\dots,n\} ;K\neq\emptyset\end{smallmatrix}}
(-1)^{u_0'}\Big\{\varepsilon_{x'}\left(\begin{smallmatrix}u_0~v_0~x_1\dots
x_n\\ u_0~x_I~x_K~v_0~x_J\end{smallmatrix}\right)
U_0\otimes X_I\bigotimes X_K\bigotimes\mu V_0.X_J+\\
&\hskip 1cm+\varepsilon_{x'}\left(\begin{smallmatrix}u_0~v_0~x_1\dots x_n\\ v_0~x_J~x_K~u_0~x_I\end{smallmatrix}\right)
\mu V_0\otimes X_J\bigotimes X_K\bigotimes U_0.X_I\Big\}+(-1)^{x_0'}X_0\bigotimes\kappa'( X_1\dots X_n)+\\
&\hskip 1cm+(-1)^{x_0'}X_0\otimes(\Delta'\otimes id)\circ\kappa'(X_1\dots X_n)\\
&=(1.1)+(1.2)+(1.3)+(1.4).
\endaligned
$$
D'autre part, on a
$$
\aligned
(id\otimes \kappa)&\circ\Delta(X_0\otimes X_1\dots X_n)=\\
&=(-1)^{x'_0}X_0\bigotimes\kappa'(X_1\dots X_n)+(-1)^{x'_0}X_0\otimes(id\otimes \kappa')\circ\Delta'(X_1\dots X_n)\\
&=(2.1)+(2.2).
\endaligned
$$
Et
$$
\aligned
\tau_{23}\circ(\kappa\otimes id)&\circ\Delta(X_0\otimes X_1\dots
X_n)=\\
&=\hskip-0.5cm\sum_{\begin{smallmatrix}U_0\otimes V_0=X_0 \\ I\cup J\cup K=\{1,\dots,n\} ;K\neq\emptyset\end{smallmatrix}}
 \hskip-0.5cm(-1)^{u_0'}\Big\{\varepsilon_{x'}\left(\begin{smallmatrix}u_0~v_0~x_1\dots x_n\\ u_0~x_I~x_K~v_0~x_J\end{smallmatrix}\right)
U_0\otimes X_I\bigotimes X_K\bigotimes\mu V_0.X_J+\\
&\hskip 2cm+\varepsilon_{x'}\left(\begin{smallmatrix}u_0~v_0~x_1\dots x_n\\ v_0~x_J~x_K~u_0~x_I\end{smallmatrix}\right)\mu V_0\otimes X_J\bigotimes X_K\bigotimes U_0.X_I\Big\}+\\
&\hskip 2cm+(-1)^{x'_0}X_0\otimes\Big(\tau_{23}\circ(\kappa'\otimes id)\circ\Delta'\Big)(X_1\dots X_n)\\
&=(3.1)+(3.2)+(3.3).
\endaligned
$$
L'identit\'e de coLeibniz donne $(1.4)=(2.2)+(3.3)$. On vérifie que
$$
(1.1)=(3.1),\quad (1.2)=(3.2),\quad (1.3)=(2.1).
$$
\end{proof}

On a ainsi muni l'espace $\mathcal H[1]\otimes S(\mathcal H[1])$ d'une structure de bicog\`ebre permutative et de Leibniz. On note cette bicog\`ebre $(\mathcal H[1]\otimes S(\mathcal H[1]),\Delta,\kappa)$.\\


\section{Alg\`ebre pr\'e-Gerstenhaber \`a homotopie pr\`es}

\

On va montrer que les cod\'erivations $m$ et $R$ de $\Delta$,
obtenues \`a partir des lois de $\mathcal G$, sont aussi des
cod\'erivations du coproduit $\kappa$. Par cons\'equent $m+R$ est
une codiff\'erentielle \`a la fois pour $\Delta$ et pour $\kappa$.
On posera donc :

\begin{defn}

\

Une algèbre pré-Gerstenhaber \ à homotopie près est, pour le m\^eme opérateur $Q$, une cogèbre permutative codiff\'erentielle $(\mathcal{C},\Delta,Q)$ et une cogèbre de Leibniz codiff\'erentielle $(\mathcal{C},\kappa,Q)$ telle que les deux coproduits $\Delta$ et $\kappa$ satisfassent les relations de compatibilités suivantes :
$$\aligned
(id \otimes\kappa)\circ\Delta&=\tau_{23} \circ(id\otimes \kappa)\circ\Delta,\\
(id\otimes \Delta)\circ\kappa&=(\kappa\otimes id)\circ\Delta+\tau_{23}\circ(\kappa\otimes id)\circ\Delta,\\
(\Delta\otimes id)\circ\kappa&=(id \otimes \kappa)\circ\Delta+\tau_{23}\circ(\kappa\otimes id)\circ\Delta.
\endaligned
$$
\end{defn}

\begin{prop}

\

Soit $\mathcal G$ une alg\`ebre pr\'e-Gerstenhaber. Sur la bicog\`ebre $(\mathcal H[1]\otimes S(\mathcal H[1]),\Delta,\kappa)$, l'op\'erateur de degré $1$ et de carr\'e nul $Q=m+R$ est une cod\'erivation du coproduit $\kappa$.\\
\end{prop}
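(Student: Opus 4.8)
The goal is to show that $Q=m+R$ is a codérivation of the Leibniz coproduit $\kappa$, i.e. that
$$
\kappa\circ Q=(Q\otimes id+id\otimes Q)\circ\kappa.
$$
The key structural fact I would exploit is that both $Q$ and $\kappa$ split into a part acting on the \textit{distinguished first slot} $X_0$ and a part acting on the symmetric tail $X_1\dots X_n$. Indeed $m=D\otimes id+(-1)^{x_0'}id\otimes m'$, $R$ decomposes similarly into a term $R_2'(X_0,X_i)\otimes\cdots$ and a term $(-1)^{x_0'}X_0\otimes\ell'(\cdots)$, and $\kappa$ itself is the sum of a piece that cuts $X_0$ (with the two orderings $U_0\otimes\cdots\otimes\mu V_0$ and $\mu V_0\otimes\cdots\otimes U_0$) plus the tail piece $(-1)^{x_0'}X_0\otimes\kappa'$. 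The plan is therefore to reduce everything to two facts: first, on the symmetric tail we already know from the $G_\infty$ case (\cite{[AAC]}) that $Q'=m'+\ell'$ is a codérivation of $\kappa'$; second, we must check the new mixed identities involving the distinguished slot.

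First I would verify the purely tail-to-tail compatibility. Restricting both sides of the codérivation equation to the terms where $X_0$ stays intact and only the $X_1\dots X_n$ are touched, the identity becomes exactly the statement that $m'+\ell'$ codifferentiates $\kappa'$ on $S^+(\mathcal H[1])$, which is the Gerstenhaber computation already established. This disposes of a large block of terms and lets me focus on the genuinely new contributions.

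Next I would treat the terms in which the first factor $X_0$ is involved. Here there are two sub-cases, mirroring the structure of the previous proof of the coJacobi identity for $\kappa$. In the first sub-case $\kappa$ cuts $X_0$ into $U_0\otimes V_0$ and $Q$ acts either inside $U_0$, inside $V_0$, or on one of the tail factors $X_i$ that $R$ has coupled to $X_0$; one matches these against the action of $(Q\otimes id+id\otimes Q)$ after $\kappa$ and checks that the Koszul signs agree. This is where the compatibility relations of the pré-Gerstenhaber algebra enter: the relations $\alpha\diamond(\beta\wedge\gamma)=(\alpha\diamond\beta)\wedge\gamma$, $(\alpha\diamond\beta)\wedge\gamma=(-1)^{(|\beta|-1)|\gamma|}(\alpha\wedge\gamma)\diamond\beta$, $\alpha\wedge(\beta\diamond\gamma)=(-1)^{(|\beta|-1)(|\gamma|-1)}\alpha\wedge(\gamma\diamond\beta)$ and $[\alpha,\beta\wedge\gamma]=[\alpha,\beta]\wedge\gamma$ are precisely what guarantee that the product $R_2$ and the differential $D$ interact correctly with the cut produced by $\kappa$, exactly as they did in Theorem 5.1 where $D$ was shown to be a dérivation of $R_2$. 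In the second sub-case $\kappa$ cuts a tail element $X_t$ while $Q$ (via $R$) couples $X_0$ to some $X_i$; here one shows the contributions either reproduce the expected right-hand-side terms or cancel in pairs, using the commutativity of shuffles and the sign bookkeeping.

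The main obstacle, as usual in this circle of results, is the sign management rather than any conceptual difficulty: every term carries a $\varepsilon_{x'}$ factor recording the reordering of the graded arguments, plus the extra signs $(-1)^{u_0'}$, $(-1)^{x_0'}$ coming from the degree shift on $\mathcal H[1]$, and one must confirm that after applying $Q$ the induced signs match those produced by pushing $Q$ past $\kappa$. I expect the cleanest route is to organize the verification term-by-term exactly as in the proof of the coJacobi relation for $\kappa$, reusing the signs $\varepsilon_i,\nu_i,\rho_i$ computed there, and then to invoke the four compatibility relations at the single point where a $\diamond$ or a bracket meets a $\wedge$ across the cut. Since each compatibility relation was introduced to make $D$ a dérivation of $R_2$, its appearance here is forced, and the computation should close without requiring any hypothesis stronger than the définition of pré-Gerstenhaber algebra.
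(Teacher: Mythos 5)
Your overall architecture is the right one and matches the paper's: split $\kappa$, $m$ and $R$ into the part touching the distinguished slot $X_0$ and the part acting on the symmetric tail, dispose of the tail-only block by quoting the $G_\infty$ result that $m'+\ell'$ is a cod\'erivation of $\kappa'$, and then match the remaining mixed terms by hand. The paper does exactly this, treating $m$ and $R$ separately (the cod\'erivation condition is linear in $Q$, so no cross terms between $\wedge$ and $\diamond$ ever arise).

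However, there is a genuine gap in how you propose to close the mixed terms. You claim the compatibility relations of the pr\'e-Gerstenhaber structure ($\alpha\diamond(\beta\wedge\gamma)=(\alpha\diamond\beta)\wedge\gamma$, etc.) are ``precisely what guarantee'' that the head-involving terms match, ``exactly as in the theorem where $D$ was shown to be a d\'erivation of $R_2$.'' This mislocates the algebraic input. Those relations are consumed entirely in the earlier theorem establishing that $(\mathcal H,R_2,D)$ is pr\'e-Lie diff\'erentielle (equivalently, in $Q^2=0$, which the proposition takes as already known); they play no role in the cod\'erivation identity for $\kappa$, where $\alpha_1\diamond\alpha_{p+1}$ and $[\alpha_k,\alpha_{p+1}]$ only ever appear as inert symbols that are never recombined with a $\wedge$. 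What actually makes the head terms cancel is a mechanism your proposal never names: the lemma $\mu_{p+q}\circ sh_{p,q}=0$. After reducing to the two-factor case $X_0\otimes X_s$, the coproduct $\kappa$ cuts through the shuffled word produced by $R_2'(X_0,X_s)$, and it is the vanishing of $\mu$ on shuffles that kills the terms where the cut separates letters of $X_0$ from letters of $X_s$ and forces the surviving cuts to lie entirely inside $X_0$ or entirely inside $X_s$; only then do the remaining pieces pair off with $(R\otimes id+id\otimes R)\circ\kappa$. Without this ingredient the term count does not close, so as written your plan would stall at the crucial step even though the skeleton is sound.
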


\begin{proof}

\

\noindent 1. Montrons d'abord que $m$ est une cod\'erivation de $\kappa$ : $(m\otimes id+ id\otimes m)\circ\kappa=-\kappa\circ m$.

On rappelle que :
$$\aligned
\kappa(X_0\otimes X_1\dots
X_n)&=\sum_{\begin{smallmatrix}U_0\otimes V_0=X_0\\ I\cup
J=\{1,\dots,n\}\end{smallmatrix}}(-1)^{u_0'}\times\Big(\varepsilon_{x'}\left(\begin{smallmatrix}
u_0v_0x_1\dots x_n\\ u_0~x_I~v_0~x_J\end{smallmatrix}\right)U_0\otimes X_I\bigotimes\mu V_0.X_J+\\
&+\varepsilon_{x'}\left(\begin{smallmatrix} u_0v_0x_1\dots x_n\\
v_0~x_J~u_0~x_I\end{smallmatrix}\right)\mu V_0\otimes
X_J\bigotimes U_0. X_I\Big)+(-1)^{x_0'}X_0\otimes\kappa'(X_1\dots
X_n)
\endaligned
$$
et
$$
m(X_0\otimes X_1\dots X_n)=D(X_0)\otimes X_1\dots X_n+(-1)^{x_0'}X_0\otimes m'(X_1\dots X_n),
$$
o\`u on a not\'e $m'$ la codiff\'erentielle $m$ de $\Delta'$ et $\kappa'$ dans $S^+(\mathcal H[1])$.

En développant, on trouve alors 10 termes dans $(m\otimes id+ id\otimes m)\circ\kappa$ et dans $-\kappa\circ m$.\\

- D'une part $(m\otimes id+ id\otimes m)\circ\kappa(X_0\otimes X_1\dots X_n)$ s'\'ecrit
$$\aligned
&\hskip-0.7cm\sum_{\begin{smallmatrix}U_0\otimes V_0=X_0\\ I\cup
J=\{1,\dots,n\}\end{smallmatrix}}\hskip-0.5cm(-1)^{u_0'}\varepsilon_{x'}\left(\begin{smallmatrix}
u_0v_0x_1\dots x_n\\
u_0~x_I~v_0~x_J\end{smallmatrix}\right)\Big(D(U_0)\otimes
X_I\bigotimes\mu V_0.X_J+(-1)^{u_0'}
U_0\otimes m'(X_I)\bigotimes \mu V_0.X_J\Big)\\
&\hskip
0.3cm+(-1)^{u_0'}\varepsilon_{x'}\left(\begin{smallmatrix}u_0v_0x_1\dots
x_n\\ v_0~x_J~u_0~x_I\end{smallmatrix}\right)\Big(
D(V_0)\otimes X_J\bigotimes U_0.X_I+(-1)^{v_0'}\mu V_0\otimes m'(X_J)\bigotimes U_0.X_I\Big)\\
&\hskip
0.3cm+(-1)^{u_0'+x_I'+u_0'}\varepsilon_{x'}\left(\begin{smallmatrix}u_0v_0x_1\dots
x_n\\ u_0~x_I~v_0~x_J\end{smallmatrix}\right)
U_0\otimes X_I\bigotimes D(\mu V_0).X_J+\\
&\hskip
0.3cm+(-1)^{u_0'+x_I'+u_0'+v'_0}\varepsilon_{x'}\left(\begin{smallmatrix}u_0v_0x_1\dots
x_n\\ u_0~x_I~v_0~x_J\end{smallmatrix}\right)
U_0\otimes X_I\bigotimes\mu V_0.m'(X_J)+\\
&\hskip
0.3cm+(-1)^{u_0'+x_J'+v_0'}\varepsilon_{x'}\left(\begin{smallmatrix}u_0v_0x_1\dots
x_n\\ v_0~x_J~u_0~x_I\end{smallmatrix}\right)
\mu V_0\otimes X_J\bigotimes D(U_0).X_I+\\
&\hskip
0.3cm+(-1)^{u_0'+x_J'+v_0'+u'_0}\varepsilon_{x'}\left(\begin{smallmatrix}u_0v_0x_1\dots
x_n\\ v_0~x_J~u_0~x_I\end{smallmatrix}\right)
\mu V_0\otimes X_J\bigotimes U_0.m'(X_I)+\\
&\hskip 0.3cm+(-1)^{x_0'}D(X_0)\otimes \kappa'(X_1\dots X_n)+\\
&\hskip 0.3cm+(-1)^{x_0'}X_0\bigotimes(m'\otimes id+ id\otimes m')\circ\kappa'(X_1\dots X_n)\\
&=(1.1)+\dots+(1.10).
\endaligned
$$

- De m\^eme, puisque $\mu D(V_0)=D(\mu V_0)$, et si $X_0=U_0\otimes V_0$,
$$
D(X_0)=D(U_0)\otimes V_0+(-1)^{u_0'}U_0\otimes D(V_0)=U'_0\otimes V_0+(-1)^{u'_0}U_0\otimes V'_0,
$$
et comme $m'(X_I.X_J)=m'(X_I).X_J+(-1)^{x'_I}X_I.m'(X_J)=X_I'.X_J\pm X_I.X'_J$, alors on d\'eveloppe $-\kappa\circ m(X_0\otimes X_1\dots X_n)$ en :

$$\aligned
&-\sum_{\begin{smallmatrix}U_0\otimes V_0=X_0\\ I\cup J=\{1,\dots,n\}\end{smallmatrix}}\Big((-1)^{u_0''}\varepsilon_{x'}\left(\begin{smallmatrix}
u_0'v_0x_1\dots x_n\\ u_0'~x_I~v_0~x_J\end{smallmatrix}\right)U_0'\otimes X_I\bigotimes\mu V_0.X_J+\\
&\hskip 1cm+(-1)^{u''_0}\varepsilon_{x'}\left(\begin{smallmatrix}u_0'v_0x_1\dots x_n\\ v_0~x_J~u_0'~x_I\end{smallmatrix}\right)
\mu V_0\otimes X_J\bigotimes U_0'.X_I+\\
&\hskip 1cm+(-1)^{u_0'+u'_0}\varepsilon_{x'}\left(\begin{smallmatrix}u_0v_0'x_1\dots x_n\\ u_0~x_I~v_0'~x_J\end{smallmatrix}\right)U_0\otimes X_I\bigotimes\mu V_0'.X_J+\\
&\hskip 1cm+(-1)^{u_0'+u'_0}\varepsilon_{x'}\left(\begin{smallmatrix}u_0v_0'x_1\dots x_n\\ v_0'~x_J~u_0~x_I\end{smallmatrix}\right)
\mu V_0'\otimes X_J\bigotimes U_0.X_I+\\
&\hskip 1cm+(-1)^{u_0'+x'_0}\varepsilon_{x'}\Big(\begin{smallmatrix}u_0v_0x_1\dots x_n\\ u_0~x_I'~v_0~x_J\end{smallmatrix}\Big)
U_0\otimes m'(X_I)\bigotimes\mu V_0.X_J+\\
&\hskip 1cm+(-1)^{u'_0+x_0'}\varepsilon_{x'}\left(\begin{smallmatrix}u_0v_0x_1\dots x_n\\ v_0~x_J~u_0~x_I'\end{smallmatrix}\right)\mu V_0\otimes X_J\bigotimes U_0.m'(X_I)+\\
&\hskip 1cm+(-1)^{u_0'+x_I'+x_0'}\varepsilon_{x'}\left(\begin{smallmatrix}
u_0v_0x_1\dots x_n\\ u_0~x_I~v_0~x_J'\end{smallmatrix}\right)U_0\otimes X_I\bigotimes \mu V_0.m'(X_J)+\\
&\hskip 1cm+(-1)^{u_0'+x_I'+x_0'}\varepsilon_{x'}\left(\begin{smallmatrix}u_0v_0x_1\dots
x_n\\ v_0~x_J'~u_0~x_I\end{smallmatrix}\right)
\mu V_0\otimes m'(X_J)\bigotimes U_0.X_I\Big)-\\
&\hskip 1cm-(-1)^{x_0'+1}D(X_0)\otimes \kappa'(X_1\dots X_n)-\\
&\hskip 1cm-(-1)^{x_0'+x'_0}X_0\bigotimes\kappa'\circ m'(X_1\dots X_n)\\
&=(2.1)+\dots+(2.10).
\endaligned
$$

On a imm\'ediatement $(1.10)=(2.10)$ car on sait que $m'$ est une cod\'erivation de $\kappa'$ (comme dans \cite{[AAC]}). On a aussi imm\'ediatement $(1.9)=(2.9)$. Les autres termes se simplifient deux \`a deux suivant le tableau :
$$\begin{array}{llll}
(1.1)=(2.1),~~&(1.2)=(2.5),~~&(1.3)=(2.4),~~&(1.4)=(2.8),\\
(1.5)=(2.3),&(1.6)=(2.7),&(1.7)=(2.2),&(1.8)=(2.6).
\end{array}
$$
V\'erifions par exemple l'\'egalit\'e des signes dans $(1.2)$ et $(2.5)$. Sh\'ematiquement :
$$
\begin{array}{ccccc}
X_0\otimes X_I.X_J&~\mapsto~&(U_0\otimes X_I)\bigotimes\mu V_0.X_J&~\mapsto~&(U_0\otimes m'(X_I))\bigotimes\mu V_0.X_J\\
\varepsilon_{x'}\left(\begin{smallmatrix}
x_0\dots x_n\\ x_0x_Ix_J
\end{smallmatrix}\right) &&(-1)^{u'_0}\varepsilon_{x'}\left(\begin{smallmatrix}v_0x_I\\ x_Iv_0\end{smallmatrix}\right)&&(-1)^{u'_0}
\end{array}
$$
Donc $\varepsilon_{(1.2)}$, le produit de ces trois signes, est $(-1)^{x'_Iv'_0}\varepsilon_{x'}\left(\begin{smallmatrix}x_0\dots x_n\\ x_0x_Ix_J\end{smallmatrix}\right)$.

Et
$$
\begin{array}{ccccc}
X_0\otimes X_I.X_J&~\mapsto~&X_0\otimes m'(X_I).X_J&~\mapsto~&(U_0\otimes m'(X_I))\bigotimes\mu V_0.X_J\\
-\varepsilon_{x'}\left(\begin{smallmatrix}x_0\dots x_n\\ x_0x_Ix_J\end{smallmatrix}\right) &&(-1)^{x'_0}&&(-1)^{u'_0}\varepsilon_{x'}\left(\begin{smallmatrix}v_0x'_I\\ x'_Iv_0\end{smallmatrix}\right)
\end{array}
$$
Donc
$$
\varepsilon_{(2.5)}=-(-1)^{v'_0+1}(-1)^{(x'_I+1)v'_0}\varepsilon_{x'}\left(\begin{smallmatrix}x_0\dots x_n\\ x_0x_Ix_J\end{smallmatrix}\right)=\varepsilon_{(1.2)}.
$$

\noindent
2. Montrons maintenant que $R$ est une cod\'erivation de $\kappa$ : $(R\otimes id+ id\otimes R)\circ\kappa=-\kappa\circ R$.

On rappelle que :
$$\aligned
R(X_0\otimes X_1\dots X_n)&=\sum_{i=1}^n\varepsilon_{x'}\Big(\begin{smallmatrix}x_1\dots x_n\\ x_i~x_1\dots\hat{i}\dots x_n\end{smallmatrix}\Big)R_2'(X_0, X_i) \otimes X_{1}\dots \widehat{_i}\dots X_{n}+\\
&\hskip 1cm(-1)^{x_0'}X_0\otimes\ell'(X_1\dots X_n),
\endaligned
$$
o\`u $\ell'$ est la cod\'erivation de la cog\`ebre $(S^+(\mathcal H[1]),\kappa')$ construite ci-dessus.\\

On identifie les termes apparaissant dans $(R\otimes id+ id\otimes R)\circ\kappa(X_0\otimes X_1\dots X_n)$ ainsi : si $\{1,\dots,n\}=I\cup J\cup\{s\}$, et $X_s$ est un des arguments de $R$:
$$\aligned
(1.1):~~ &\varepsilon_{(1.1)}R'_2(U_0,X_s)\otimes X_I\bigotimes\mu V_0.X_J,\\
(1.2):~~ &\varepsilon_{(1.2)}R'_2(\mu V_0,X_s)\otimes X_J\bigotimes U_0.X_I,\\
(1.3):~~ &\varepsilon_{(1.3)}R'_2(X_0,U_s)\otimes X_I \bigotimes\mu V_s.X_J,\\
(1.4):~~ &\varepsilon_{(1.4)}R'_2(X_0,\mu V_s)\otimes X_J \bigotimes U_s.X_I,\\
(1.5):~~ &\varepsilon_{(1.5)}U_0\otimes X_I \bigotimes \ell'_s(\mu V_0.X_s.X_J),\\
(1.6):~~ &\varepsilon_{(1.6)}\mu V_0\otimes X_J \bigotimes \ell'_s(U_0.X_s.X_I),\\
(1.7):~~ &\varepsilon_{(1.7)}U_0\otimes\ell'_s(X_s.X_I)\bigotimes\mu V_0.X_J,\\
(1.8):~~ &\varepsilon_{(1.8)}\mu V_0\otimes\ell'_s(X_s.X_J)\bigotimes U_0.X_I,\\
(1.9):~~ &\varepsilon_{(1.9)}R'_2(X_0,X_s)\otimes\kappa'(X_1\dots\hat{_s}\dots X_n),\\
(1.10):~~ &\varepsilon_{(1.10)}X_0\otimes (\ell'\otimes id+id\otimes\ell')\circ\kappa'(X_1 \dots X_n).
\endaligned
$$

De m\^eme, dans $-\kappa\circ R(X_0\otimes X_1\dots X_n)$, les termes qui apparaissent sont les suivants : où on a posé $R'_2(X_0,X_s)=U_{0s}\otimes V_{0s}$.
$$\aligned
(2.1):~~ &\varepsilon_{(2.1)}U_{0s}\otimes X_I\bigotimes\mu V_{0s}.X_J,\\
(2.2):~~ &\varepsilon_{(2.2)}\mu V_{0s}\otimes X_J\bigotimes U_{0s}.X_I,\\
(2.3):~~ &\varepsilon_{(2.3)}U_0\otimes X_I\bigotimes\mu V_0.\ell'_s(X_s.X_J),\\
(2.4):~~ &\varepsilon_{(2.4)}\mu V_0\otimes X_J\bigotimes U_0.\ell'_s(X_s.X_I),\\
(2.5):~~ &\varepsilon_{(2.5)}U_0\otimes \ell'_s(X_s.X_I)\bigotimes\mu V_0.X_J,\\
(2.6):~~ &\varepsilon_{(2.6)}\mu V_0\otimes \ell'_s(X_s.X_J)\bigotimes U_0.X_I,\\
(2.7):~~ &\varepsilon_{(2.7)}R'_2(X_0,X_s) \otimes \kappa'(X_1\dots\hat{_s}\dots X_n),\\
(2.8):~~ &\varepsilon_{(2.8)}X_0\otimes \kappa'\circ\ell'(X_1\dots X_n).
\endaligned
$$

Comme dans la preuve pr\'ec\'edente, les 4 derniers termes de ces deux listes se simplifient deux \`a deux :
$$
\begin{array}{llll}
(1.10)=(2.8),&(1.9)=(2.7),&(1.8)=(2.6),&(1.7)=(2.5).
\end{array}
$$

On d\'ecompose ensuite $(1.5)$ et $(1.6)$ en:
$$\aligned
(1.5)&=\varepsilon_{(1.5)}\left(U_0\otimes X_I \bigotimes \ell'_s(\mu V_0.X_s).X_J+(-1)^{v'_0}U_0\otimes X_I \bigotimes\mu V_0.\ell'_s(X_s.X_J)\right)\\
&=(1.5)_1+(1.5)_2,\\
(1.6)&=\varepsilon_{(1.6)}\left(\mu V_0\otimes X_J \bigotimes \ell'_s(U_0.X_s).X_I+(-1)^{u'_0}\mu V_0\otimes X_J \bigotimes U_0.\ell'_s(X_s.X_I)\right)\\
&=(1.6)_1+(1.6)_2.
\endaligned
$$

On v\'erifie que $(1.5)_2=(2.3)$ et $(1.6)_2=(2.4)$. Dans tous les termes restants, $X_I$ et $X_J$ sont de simples facteurs, on est en fait ramen\'e \`a prouver la relation sur $X_0\otimes X_s$.\\

Dans la suite de la preuve, on pose $s=1$.

Posons $X_0=\alpha_1\otimes\dots\otimes\alpha_p=\alpha_{[1,p]}$ et $X_1=\alpha_{p+1}\otimes\dots\otimes\alpha_{p+q}=\alpha_{[p+1,p+q]}$. Alors, on a
$$
\aligned
R_2'(X_0\otimes X_1)&=(-1)^{x_0'+\alpha_{p+1}\alpha_{[2,p]}}(\alpha_1\diamond \alpha_{p+1})\otimes sh(\alpha_{[2,p]},\alpha_{[p+2,p+q]})\\
&\hskip 0.5cm +\sum_{k=2}^p(-1)^{x_0'+\alpha_{p+1}\alpha_{[k+1,p]}}\alpha_{[1,k-1]}\otimes[\alpha_k,\alpha_{p+1}]\otimes sh(\alpha_{[k+1,p]},\alpha_{[p+2,p+q]}).
\endaligned
$$
En appliquant $-\kappa$, on obtient :

$$
\aligned
&-\kappa\circ R_2'(X_0\otimes X_1)=\\
&=\varepsilon_1(\alpha_1\diamond \alpha_{p+1})\bigotimes \mu\big(sh(\alpha_{[2,p]},\alpha_{[p+2,p+q]})\big)\\
&\hskip 0.5cm+\varepsilon_1'\mu\big(sh(\alpha_{[2,p]},\alpha_{[p+2,p+q]})\big)\bigotimes(\alpha_1\diamond \alpha_{p+1})\\
&\hskip 0.5cm+\sum_{\begin{smallmatrix}2\leq k\leq p+q\\ \sigma\in Sh_{p-1,q-1}\end{smallmatrix}}\varepsilon_2(\alpha_1\diamond \alpha_{p+1})\otimes
\alpha_{\sigma^{-1}([2,k])}\bigotimes\mu(\alpha_{\sigma^{-1}([k+1,p+q]\setminus\{p+1\})})\\
&\hskip 0.5cm+\sum_{\begin{smallmatrix}2\leq k\leq p+q\\ \sigma\in Sh_{p-1,q-1}\end{smallmatrix}}\varepsilon_2'\mu(\alpha_{\sigma^{-1}([k+1,p+q]\setminus\{p+1\})})\bigotimes(\alpha_1\diamond \alpha_{p+1})\otimes
\alpha_{\sigma^{-1}([2,k])}\\
&\hskip 0.5cm+\sum_{j<k\leq p}\varepsilon_3\alpha_{[1,j-1]}\bigotimes \mu\Big(\alpha_{[j,k-1]}\otimes[\alpha_k,\alpha_{p+1}]\otimes sh(\alpha_{[k+1,p]},\alpha_{[p+2,p+q]})\Big)\\
&\hskip 0.5cm+\sum_{j<k\leq p}\varepsilon_3'\mu\Big(\alpha_{[j,k-1]}\otimes[\alpha_k,\alpha_{p+1}]\otimes
sh(\alpha_{[k+1,p]},\alpha_{[p+2,p+q]})\Big)\bigotimes\alpha_{[1,j-1]}\\
\endaligned
$$
$$\aligned
&\hskip 0.5cm+\sum_{2\leq k\leq p}\varepsilon_4\alpha_{[1,k-1]}\bigotimes \mu\Big([\alpha_k,\alpha_{p+1}]\otimes
sh(\alpha_{[k+1,p]},\alpha_{[p+2,p+q]})\Big)\\
&\hskip 0.5cm +\sum_{2\leq k\leq p}\varepsilon_4'\mu\Big([\alpha_k,\alpha_{p+1}]\otimes
sh(\alpha_{[k+1,p]},\alpha_{[p+2,p+q]})\Big)\bigotimes\alpha_{[1,k-1]}\\
&\hskip 0.5cm+\hskip-0.5cm\sum_{\begin{smallmatrix}j> k\\ \sigma\in Sh_{j-k,p+q-j-1}\end{smallmatrix}}\hskip-0.8cm\varepsilon_5
\alpha_{[1,k-1]}\otimes[\alpha_k,\alpha_{p+1}]\otimes
\alpha_{\sigma^{-1}([k+1,j])}\bigotimes \mu(\alpha_{\sigma^{-1}([j+1,p+q])})\\
&\hskip 0.5cm+\hskip-0.5cm\sum_{\begin{smallmatrix}j> k\\ \sigma\in Sh_{j-k,p+q-j-1}\end{smallmatrix}}\hskip-0.8cm\varepsilon_5'
\mu(\alpha_{\sigma^{-1}([j+1,p+q])})\bigotimes\alpha_{[1,k-1]}\otimes[\alpha_k,\alpha_{p+1}]\otimes
\alpha_{\sigma^{-1}([k+1,j])}\\
&=(1)+(1)'+(2)+(2)'+(3)+(3)'+(4)+(4)'+(5)+(5)'.
\endaligned$$

Puisque $\mu\circ sh=0$, alors, $(1)=(1)'=0$.

Pour la m\^eme raison, dans $(2)$, et $(2)'$, les ensembles $\sigma^{-1}([a,b])$ sont inclus dans $[2,p]$ ou $[p+2,p+q]$, de m\^eme dans $(5)$, et $(5)'$, $\sigma^{-1}([j+1,p+q])$ est inclus dans $[k+1,p]$ ou $[p+2,p+q]$.

Donc
$$\aligned
(2)=(2)_0+(2)_1&=\sum_{\begin{smallmatrix}U_0\otimes V_0=X_0\setminus\{\alpha_{1}\}\\ \sigma\in Sh\end{smallmatrix}}
\pm(\alpha_1\diamond \alpha_{p+1})\otimes\sigma.(U_0\otimes X_1\setminus\{\alpha_{p+1}\})\bigotimes\mu(V_0)\\
&+\sum_{\begin{smallmatrix}U_1\otimes V_1=X_1\setminus \{\alpha_{p+1}\}\\ \sigma\in Sh\end{smallmatrix}}\pm(\alpha_1\diamond \alpha_{p+1})\otimes\sigma.(X_0\setminus\{\alpha_1\}\otimes U_1)\bigotimes\mu(V_1).
\endaligned
$$
Et de m\^eme pour $(2)'$.

Dans $(5)$, il reste :
$$\aligned
(5)=(5)_0+(5)_1=&\hskip-0.5cm\sum_{ \begin{smallmatrix}k, U^k_0\otimes V_0=X_0\setminus\alpha_{[1,k]}\\
\sigma\in Sh\end{smallmatrix}}\hskip-0.5cm
\pm\alpha_{[1,k-1]}\otimes[\alpha_k,\alpha_{p+1}]\otimes \sigma.(U^k_0\otimes X_1\setminus\{\alpha_{p+1}\})\bigotimes\mu(V_0)\\
&\hskip -0.5cm+\sum_{ \begin{smallmatrix}k, U_1\otimes V_1=X_1\setminus\{\alpha_{p+1}\}\\
\sigma\in Sh\end{smallmatrix}}\hskip-0.5cm \pm\alpha_{[1,k-1]}\otimes[\alpha_k,\alpha_{p+1}]\otimes \sigma.(X_0\setminus\alpha_{[1,k]}\otimes U_1)\bigotimes\mu(V_1).
\endaligned
$$
De m\^eme pour $(5)'$.

Enfin on remarque que :
$$\aligned
(4)+(3)=&\sum_{\begin{smallmatrix}U_0\otimes V_0=X_0\\ U_0=\alpha_{[1,k-1]}\end{smallmatrix}}\pm U_0\bigotimes R'_2(\mu V_0\otimes X_1)\\
&\hskip 0.5cm+\sum_{\begin{smallmatrix}U_0\otimes V_0=X_0\\ U_0=\alpha_{[1,k-1]}\end{smallmatrix}}\pm U_0\bigotimes\mu\Big([\alpha_k,\alpha_{p+1}]\otimes
sh\big(V_0\setminus\{\alpha_k\}, X_1\setminus\{\alpha_{p+1}\}\big)\Big).\\
\endaligned
$$
Donc 
$$
(4)+(3)=\sum_{U_0\otimes V_0=X_0}\pm U_0\bigotimes\ell'_2(\mu V_0.X_1).
$$

Et de m\^eme pour $(4)'+(3)'$.\\

On vérifie enfin les \'egalit\'es suivantes
$$\begin{array}{ccc}
(4)+(3)=(1.5)_1,&(4)'+(3)'=(1.2),&(2)_0+(5)_0=(1.1),\\
(2)'_0+(5)'_0=(1.6)_1,&(2)_1+(5)_1=(1.3),&(2)'_1+(5)'_1=(1.4).
\end{array}
$$

Ce qui ach\`eve la preuve.\\

\end{proof}


On a ainsi montr\'e que $\left(\mathcal{H}[1]\otimes S(\mathcal{H}[1]),\kappa,Q\right)$ est une cog\`ebre de Leibniz codiff\'erentielle, c'est à dire, c'est une $Z_\infty$ algèbre.\\

\begin{thm}

\

Soit $(\mathcal G,\wedge,\diamond)$ une alg\`ebre pr\'e-Gerstenhaber, notons $\mathcal H=T^+(\mathcal G[1])$, $\Delta$, $\kappa$ les coproduits et $Q=m+R$, la cod\'erivation d\'efinis ci-dessus sur $\mathcal H[1]\otimes S^+(\mathcal H[1])$, alors
$$
\left(\mathcal{H}[1]\otimes S(\mathcal{H}[1]),\Delta,\kappa,Q\right)
$$
est une $preG_\infty$ alg\`ebre, appel\'ee la $preG_\infty$ alg\`ebre enveloppante de $(\mathcal G,\wedge,\diamond)$.\\
\end{thm}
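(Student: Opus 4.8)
Le plan consiste \`a reconna\^\i tre ce th\'eor\`eme final comme l'assemblage des r\'esultats \'etablis tout au long des sections pr\'ec\'edentes : il s'agit de v\'erifier chacune des trois clauses de la d\'efinition d'une $preG_\infty$ alg\`ebre en invoquant, pour chacune, un \'enonc\'e d\'ej\`a d\'emontr\'e. Puisque $(\mathcal G,\wedge,\diamond)$ est une alg\`ebre pr\'e-Gerstenhaber, $(\mathcal G,\wedge)$ est en particulier une alg\`ebre de Zinbiel, d'o\`u la cog\`ebre de Leibniz codiff\'erentielle enveloppante $(\mathcal H,\delta,D)$ avec $\mathcal H=T^+(\mathcal G[1])$.

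On traiterait d'abord la premi\`ere clause, \`a savoir que $(\mathcal H[1]\otimes S(\mathcal H[1]),\Delta,Q)$ est une cog\`ebre permutative codiff\'erentielle. C'est exactement le contenu du Corollaire qui suit le Th\'eor\`eme 5.1 : ce dernier affirme que $(\mathcal H,R_2,D)$ est une alg\`ebre pr\'e-Lie diff\'erentielle gradu\'ee, et la construction de la $preL_\infty$ alg\`ebre enveloppante fournit alors que $Q=m+R$ est une cod\'erivation de $\Delta$ v\'erifiant $deg'(Q)=1$ et $Q^2=0$. C'est l\`a que r\'eside le c\oe ur du travail d\'ej\`a accompli, notamment la v\'erification que $D$ est une d\'erivation de $R_2$, qui exploite de fa\c con essentielle les relations de compatibilit\'e renforc\'ees de notre d\'efinition.

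On rassemblerait ensuite les deux Propositions de la section consacr\'ee \`a $\kappa$. La premi\`ere \'etablit que $\kappa$ v\'erifie l'identit\'e de coJacobi de Leibniz, donc que $(\mathcal H[1]\otimes S(\mathcal H[1]),\kappa)$ est une cog\`ebre de Leibniz, compte tenu du d\'ecalage de degr\'e. La seconde donne les trois relations de compatibilit\'e entre $\Delta$ et $\kappa$, qui sont pr\'ecis\'ement celles exig\'ees par la d\'efinition : ceci r\`egle la troisi\`eme clause. Enfin, la Proposition ouvrant la derni\`ere section montre que ce m\^eme op\'erateur $Q=m+R$ est une cod\'erivation de $\kappa$ ; joint \`a $deg'(Q)=1$ et $Q^2=0$ d\'ej\`a acquis, ceci \'etablit la deuxi\`eme clause, \`a savoir que $(\mathcal H[1]\otimes S(\mathcal H[1]),\kappa,Q)$ est une cog\`ebre de Leibniz codiff\'erentielle.

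Les trois clauses \'etant r\'eunies pour un seul et m\^eme op\'erateur $Q$, le quadruplet $(\mathcal H[1]\otimes S(\mathcal H[1]),\Delta,\kappa,Q)$ est une $preG_\infty$ alg\`ebre. \`A ce stade aucun obstacle calculatoire ne subsiste, toute l'analyse d\'elicate ayant \'et\'e men\'ee dans les propositions ant\'erieures ; le seul point demandant de l'attention est la coh\'erence des conventions de degr\'e --- s'assurer qu'il s'agit bien du m\^eme $Q$, de degr\'e $1$ pour $deg'$, dans les structures permutative et de Leibniz, et que les d\'ecalages \`a $\mathcal H[1]$ sont employ\'es de mani\`ere compatible d'un \'enonc\'e \`a l'autre. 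En particulier $Q^2=0$ n'a besoin d'\^etre d\'emontr\'e qu'une seule fois et se transmet alors aux deux structures.
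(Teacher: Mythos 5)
Votre démonstration est correcte et suit exactement la démarche du papier : celui-ci énonce d'ailleurs ce théorème final sans preuve détaillée, précisément parce qu'il résulte de l'assemblage du corollaire sur la structure de $preL_\infty$ algèbre, des deux propositions sur $\kappa$ (identité de coJacobi de Leibniz et compatibilités avec $\Delta$) et de la proposition établissant que $Q=m+R$ est une codérivation de $\kappa$. Votre remarque sur la cohérence des conventions de degré et le fait que $Q^2=0$ ne doit être vérifié qu'une seule fois est juste et complète utilement le raisonnement.
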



\end{document}